\newcommand{\ie}{\textit{i.e.}}
\newcommand{\ignore}[1]{}
\newcommand{\cat}[1]{\ensuremath{\mathbf{#1}}}
\newcommand{\op}{\ensuremath{\text{op}}}
\newcommand{\id}[1][]{\ensuremath{\mathrm{id}_{#1}}}
\newcommand{\sem}[1]{\ensuremath{\llbracket #1 \rrbracket}}
\newcommand{\defined}{\ensuremath{\mathop{\downarrow}}}
\renewcommand{\dag}{\dagger} 
\newcommand{\oprel}{\ensuremath{\circ}} 
\newcommand{\catC}{\cat{C}}
\newcommand{\catD}{\cat{D}}
\newcommand{\Hilb}{\cat{Hilb}}
\newcommand{\FHilb}{\cat{FHilb}}
\newcommand{\boundedops}{\mathcal{B}}
\newcommand{\coproj}{\kappa}
\DeclareMathOperator{\Rel}{Rel}
\DeclareMathOperator{\RG}{RG}
\DeclareMathOperator{\Cat}{Cat}
\DeclareMathOperator{\Gpd}{Gpd}
\DeclareMathOperator{\Conn}{Conn}
\DeclareMathOperator{\CP}{CP}
\DeclareMathOperator{\dom}{dom}
\DeclareMathOperator{\Dis}{Dis}
\DeclareMathOperator{\BinaryCat}{2Frob}
\DeclareMathOperator{\TernCat}{3Frob}
\DeclareMathOperator{\TernCatNorm}{3Frob_n}
\DeclareMathOperator{\BinSub}{\BinaryCat_{{sub}}}
\newcommand{\T}{\TernCat(\catD)}
\newcommand{\B}{\BinaryCat(\catD)}
\theoremstyle{plain}
\newtheorem{theorem}{Theorem}[section]
\newtheorem{proposition}[theorem]{Proposition}
\newtheorem{lemma}[theorem]{Lemma}
\newtheorem{corollary}[theorem]{Corollary}
\theoremstyle{definition}
\newtheorem{definition}[theorem]{Definition}
\newtheorem{example}[theorem]{Example}
\newtheorem{remark}[theorem]{Remark}
\newenvironment{pic}[1][] {\begin{aligned}\begin{tikzpicture}[font=\tiny,#1]}{\end{tikzpicture}\end{aligned}}
\tikzstyle{dot}=[circle, draw=black, fill=black!25, inner sep=.4ex]
\tikzstyle{bigdot}=[dot, inner sep=0pt]
\tikzstyle{whitedot}=[circle, draw=black, fill=white, inner sep=.4ex]
\tikzstyle{blackdot}=[circle, draw=black, fill=black, inner sep=.4ex]
\tikzset{arrow/.style={decoration={
    markings,
    mark=at position #1 with \arrow{>[length=2pt, width=3pt]}},
    postaction=decorate},
    reverse arrow/.style={decoration={
    markings,
    mark=at position #1 with {{\arrow{<[length=2pt, width=3pt]}}}},
    postaction=decorate}
}
\newif\ifvflip\pgfkeys{/tikz/vflip/.is if=vflip}
\newif\ifhflip\pgfkeys{/tikz/hflip/.is if=hflip}
\newif\ifhvflip\pgfkeys{/tikz/hvflip/.is if=hvflip}
\newlength\morphismheight
\newlength\wedgewidth
\tikzset{width/.initial=1mm}
\tikzstyle{morphism}=[font=\small,morphismshape]
\newcommand\relto[1][]{\mathbin{\smash{
\begin{tikzpicture}[baseline={([yshift=-1pt]
current bounding box.south)}]
    \node (A) at (0,0) [inner xsep=0pt, inner ysep=1pt, minimum width=0.15cm] {\ensuremath{\scriptstyle #1}};
    \draw [->, line width=0.4pt, line cap=round]
        ([xshift=-2.5pt] A.south west)
        to ([xshift=3pt] A.south east);
    \draw [line width=.4pt] ([xshift=-1pt,yshift=-2pt]A.south) to ([xshift=1pt,yshift=2pt]A.south);
\end{tikzpicture}}}}
\newcommand\sxto[1]{\mathbin{\smash{
\begin{tikzpicture}[baseline={([yshift=-1pt]
current bounding box.south)}]
    \node (A) at (0,0) [inner xsep=0pt, inner ysep=1pt, minimum width=0.15cm] {\ensuremath{\scriptstyle #1}};
    \draw [->, line width=0.4pt, line cap=round]
        ([xshift=-2.5pt] A.south west)
        to ([xshift=3pt] A.south east);
\end{tikzpicture}}}}
\newcommand{\tinymult}[1][dot]{
\smash{\raisebox{-2pt}{\hspace{-5pt}\ensuremath{\begin{pic}[scale=0.4,yscale=-1]
    \node (0) at (0,0) {};
    \node[#1, inner sep=1.5pt] (1) at (0,0.55) {};
    \node (2) at (-0.5,1) {};
    \node (3) at (0.5,1) {};
    \draw (0.center) to (1.center);
    \draw (1.center) to [out=left, in=down, out looseness=1.5] (2.center);
    \draw (1.center) to [out=right, in=down, out looseness=1.5] (3.center);
    \node[#1, inner sep=1.5pt] (1) at (0,0.55) {};
\end{pic}
}\hspace{-3pt}}}}
\newcommand{\tinymultflip}[1][dot]{
\smash{\raisebox{-2pt}{\hspace{-5pt}\ensuremath{\begin{pic}[scale=0.4,yscale=1]
    \node (0) at (0,0) {};
    \node[#1, inner sep=1.5pt] (1) at (0,0.55) {};
    \node (2) at (-0.5,1) {};
    \node (3) at (0.5,1) {};
    \draw (0.center) to (1.center);
    \draw (1.center) to [out=left, in=down, out looseness=1.5] (2.center);
    \draw (1.center) to [out=right, in=down, out looseness=1.5] (3.center);
    \node[#1, inner sep=1.5pt] (1) at (0,0.55) {};
\end{pic}
}\hspace{-3pt}}}}
\newcommand{\tinymulttriple}[1][dot]{
\smash{\raisebox{-2pt}{\hspace{-5pt}\ensuremath{\begin{pic}[scale=0.4,yscale=-1]
    \node (0) at (0,0) {};
    \node[#1, inner sep=1.5pt] (1) at (0,0.55) {};
    \node (2) at (-0.5,1) {};
    \node (3) at (0.5,1) {};
    \draw (0.center) to (1.center);
    \draw (1.center) to [out=left, in=down, out looseness=1.5] (2.center);
    \draw (1.center) to [out=right, in=down, out looseness=1.5] (3.center);
        \draw (1.center) to (0,1);
    \node[#1, inner sep=1.5pt] (1) at (0,0.55) {};
\end{pic}
}\hspace{-3pt}}}}
\newcommand{\tinymulttripleflip}[1][dot]{
\smash{\raisebox{-2pt}{\hspace{-5pt}\ensuremath{\begin{pic}[scale=0.4,yscale=1]
    \node (0) at (0,0) {};
    \node[#1, inner sep=1.5pt] (1) at (0,0.55) {};
    \node (2) at (-0.5,1) {};
    \node (3) at (0.5,1) {};
    \draw (0.center) to (1.center);
    \draw (1.center) to [out=left, in=down, out looseness=1.5] (2.center);
    \draw (1.center) to [out=right, in=down, out looseness=1.5] (3.center);
        \draw (1.center) to (0,1);
    \node[#1, inner sep=1.5pt] (1) at (0,0.55) {};
\end{pic}
}\hspace{-3pt}}}}
\newcommand{\tinycomult}[1][dot]{
\smash{\raisebox{-2pt}{\hspace{-5pt}\ensuremath{\begin{pic}[scale=0.4,yscale=1]
    \node (0) at (0,0) {};
    \node[#1, inner sep=1.5pt] (1) at (0,0.55) {};
    \node (2) at (-0.5,1) {};
    \node (3) at (0.5,1) {};
    \draw (0.center) to (1.center);
    \draw (1.center) to [out=left, in=down, out looseness=1.5] (2.center);
    \draw (1.center) to [out=right, in=down, out looseness=1.5] (3.center);
    \node[#1, inner sep=1.5pt] (1) at (0,0.55) {};
\end{pic}
}\hspace{-3pt}}}}
\newcommand{\tinyunit}[1][dot]{
\smash{\raisebox{1pt}{\hspace{-3pt}\ensuremath{\begin{pic}[scale=0.4,yscale=-1]
    \node (0) at (0,0) {};
    \node[#1, inner sep=1.5pt] (1) at (0,0.55) {};
    \draw (0.center) to (1.north);
\end{pic}
}\hspace{-1pt}}}}
\newcommand{\tinyunitdual}[1][dot]{
\smash{\raisebox{1pt}{\hspace{-3pt}\ensuremath{\begin{pic}[scale=0.4,yscale=1]
    \node (0) at (0,0) {};
    \node[#1, inner sep=1.5pt] (1) at (0,-0.55) {};
    \draw[arrow=.9] (0.center) to (1.north);
\end{pic}
}\hspace{-1pt}}}}
\newcommand{\tinycounit}[1][dot]{
\smash{\raisebox{-5pt}{\hspace{-3pt}\ensuremath{\begin{pic}[scale=0.4,yscale=1]
    \node (0) at (0,0) {};
    \node[#1, inner sep=1.5pt] (1) at (0,0.55) {};
    \draw (0.center) to (1.north);
        \node[#1, inner sep=1.5pt] (1) at (0,0.55) {};
\end{pic}
}\hspace{-1pt}}}}
\newcommand{\tinycup}{\smash{\raisebox{-3pt}{\hspace{-2pt}\ensuremath{\begin{pic}[scale=0.2]
   \pgftransformscale{1.5} \draw[arrow=.6, scale = 1] (0,0) to[out=-90,in=-90,looseness=1.5] (1.5,0);
\end{pic}}}}}
\newcommand{\tinycap}{\smash{\raisebox{-3pt}{\hspace{-2pt}\ensuremath{\begin{pic}[scale=0.2, yscale=-1]
   \pgftransformscale{1.5} \draw[arrow=.6, scale = 1] (0,0) to[out=-90,in=-90,looseness=1.5] (1.5,0);
\end{pic}}}}}
\newcommand{\tinycupswap}{\smash{\raisebox{-3pt}{\hspace{-2pt}\ensuremath{\begin{pic}[scale=0.2]
   \pgftransformscale{1.5} \draw[reverse arrow=.6, scale = 1] (0,0) to[out=-90,in=-90,looseness=1.5] (1.5,0);
\end{pic}}}}}
\newcommand{\tinycapswap}{\smash{\raisebox{-3pt}{\hspace{-2pt}\ensuremath{\begin{pic}[scale=0.2, yscale=-1]
   \pgftransformscale{1.5} \draw[reverse arrow=.6, scale = 1] (0,0) to[out=-90,in=-90,looseness=1.5] (1.5,0);
\end{pic}}}}}
\newcommand{\tinyswap}{
\smash{\raisebox{-2pt}{\hspace{-2pt}\ensuremath{
   \begin{pic}[scale=.25]
  \draw (0,-.5) to[out=80,in=-100] (1,.5);
  \draw (1,-.5) to[out=100,in=-80] (0,.5);
\end{pic}}}}
}
\newcommand{\tinyinvolution}[1][dot]{
\begin{pic}[scale=.3]
  \node[#1] (d) at (0,0) {};
  \draw[arrow=.9] (0,0.7) to (d);
  \draw[arrow=.9] (0,-0.7) to (d);
\end{pic}
}
\newcommand{\tinyinvolutionswap}[1][dot]{
\begin{pic}[scale=.3]
  \node[#1] (d) at (0,0) {};
  \draw[arrow=.9] (d) to (0,0.7);
  \draw[arrow=.9] (d) to (0,-0.7);
\end{pic}
}
\newcommand{\tablemult}{
 \begin{pic}[scale=.3] \node[dot] (r) at (1,0) {}; \draw[arrow=.9] (r) to +(0,-1); \draw[arrow=.9] (r) to +(0,1);
  \draw[reverse arrow=.9] (r) to[out=180,in=90,looseness=.7] +(-.5,-1); \draw[reverse arrow=.95] (r) to[out=0,in=90] +(.5,-1);
   \end{pic}
}
\newcommand{\tablemultdual}{
    \begin{pic}[scale=.3]
      \node[dot] (d) at (0,0){};
      \draw[arrow=.9] (d) to +(0,-1);
      \draw[reverse arrow=.9] (d) to[out=180,in=90] +(-.5,-1);
      \draw[reverse arrow=.9] (d) to[out=0,in=90] +(.5,-.5) to[out=-90,in=-90,looseness=1.5] +(.75,0) to +(0,2);
      \draw[arrow=.97] (d) to[out=90,in=90,looseness=1.5] +(-1,.2) to +(0,-1.2);
    \end{pic}
}
\newcommand{\tablemultupdown}
{
  \begin{pic}[scale=.3]
    \node[dot] (l) at (0,1) {};
    \node[dot] (r) at (1,0) {};
    \draw[arrow=.5] (r) to[out=90,in=0] (l);
    \draw[arrow=.9] (r) to +(0,-1);
    \draw[reverse arrow=.9] (r) to[out=180,in=90,looseness=.7] +(-.5,-1);
    \draw[arrow=.95] (l) to +(0,-2);
    \draw[reverse arrow=.95] (l) to[out=180,in=90,looseness=.7] +(-.75,-2);
    \draw[arrow=.9] (l) to +(0,1);
    \draw[reverse arrow=.95] (r) to[out=0,in=90] +(.5,-.5) to[out=-90,in=-90] +(.5,0) to +(0,2.5);
  \end{pic}
}
\newcommand{\tablemultdownup}
{
  \begin{pic}[scale=.3]
    \node[dot] (l) at (0,0) {};
    \node[dot] (r) at (1,1) {};
    \draw[arrow=.5] (l) to[out=90,in=180] (r);
    \draw[arrow=.9] (l) to +(0,-1);
    \draw[reverse arrow=.9] (l) to[out=0,in=90,looseness=.7] +(.5,-1);
    \draw[arrow=.95] (r) to +(0,-2);
    \draw[reverse arrow=.95] (r) to[out=0,in=90,looseness=.7] +(.75,-2);
    \draw[arrow=.9] (r) to +(0,1);
    \draw[reverse arrow=.95] (l) to[out=180,in=90] +(-.5,-.5) to[out=-90,in=-90] +(-.5,0) to +(0,2.5);
  \end{pic}
}
\newcommand{\tableridem}
{
  \begin{pic}[scale=.3]
    \node[dot] (r) at (1,0) {};
    \draw[arrow=.9] (r) to +(0,-1);
    \draw[arrow=.9] (r) to +(0,1);
    \draw[reverse arrow=.9] (r) to[out=180,in=90,looseness=.7] +(-.5,-1);
    \draw[reverse arrow=.95] (r) to[out=0,in=90] +(.5,-.5) to[out=-90,in=-90] +(.5,0) to +(0,1.5);
  \end{pic}
}
\newcommand{\tablelidem}
{
\begin{pic}[scale=.3]
    \node[dot] (r) at (1,0) {};
    \draw[arrow=.9] (r) to +(0,-1);
    \draw[arrow=.9] (r) to +(0,1);
    \draw[reverse arrow=.9] (r) to[out=0,in=90,looseness=.7] +(.5,-1);
    \draw[reverse arrow=.95] (r) to[out=180,in=90] +(-.5,-.5) to[out=-90,in=-90] +(-.5,0) to +(0,1.5);
  \end{pic}
}
\newcommand{\tinylidem}
{
\begin{pic}[scale=.25]
    \node[dot] (r) at (1,0) {};
    \draw[arrow=.9] (r) to +(0,-1);
    \draw[arrow=.9] (r) to +(0,1);
    \draw[reverse arrow=.9] (r) to[out=0,in=90,looseness=.7] +(.5,-1);
    \draw[reverse arrow=.95] (r) to[out=180,in=90] +(-.5,-.5) to[out=-90,in=-90] +(-.5,0) to +(0,1.5);
  \end{pic}
}
\newcommand{\tinyridem}
{
  \begin{pic}[scale=.25]
    \node[dot] (r) at (1,0) {};
    \draw[arrow=.9] (r) to +(0,-1);
    \draw[arrow=.9] (r) to +(0,1);
    \draw[reverse arrow=.9] (r) to[out=180,in=90,looseness=.7] +(-.5,-1);
    \draw[reverse arrow=.95] (r) to[out=0,in=90] +(.5,-.5) to[out=-90,in=-90] +(.5,0) to +(0,1.5);
  \end{pic}
}
\title{Monoidal characterisation of groupoids and connectors}
\author{Marino Gran}
\email{marino.gran@uclouvain.be}
\address{Universit{\'e} Catholique de Louvain, Institut de Recherche en Math{\'e}matique et Physique, Chemin du Cyclotron 2, 1348 Louvain-la-Neuve, Belgium}
\author{Chris Heunen}
\email{chris.heunen@ed.ac.uk}
\address{University of Edinburgh, School of Informatics, \\10 Crichton Street, Edinburgh EH8 9AB, United Kingdom}
\thanks{Supported by EPSRC Fellowship EP/L002388/1}
\author{Sean Tull}
\email{sean.tull@ed.ac.uk}
\address{University of Oxford, Department of Computer Science, \\Wolfson Building, Parks Road, Oxford OX1 3QD, United Kingdom}
\thanks{Supported by EPSRC Studentship OUCL/2014/SET}
\thanks{This work came about during visits made possible by funding from the Institut de Recherche en Math{\'e}matique et Physique of the Universit{\'e} Catholique de Louvain}
\begin{document}
\begin{abstract}
  We study internal structures in regular categories using monoidal methods. Groupoids in a regular Goursat category can equivalently be described as special dagger Frobenius monoids in its monoidal category of relations. Similarly, connectors can equivalently be described as Frobenius structures with a ternary multiplication. We study such ternary Frobenius structures and the relationship to binary ones, generalising that between connectors and groupoids.
\end{abstract}
\maketitle

\section{Introduction}

Algebraic structures internal to categories are useful in many situations. 
To name but a few fundamental ones: internal groups and groupoids in algebra and algebraic topology, crossed modules in homotopy theory and non-abelian (co)homology, and categories themselves in higher category theory.
In categorical algebra such internal structures are traditionally studied by assuming suitable exactness properties on the category they live in. For example, the category is often assumed to be regular, Mal'tsev, or semi-abelian. 

Instead of exactness properties one can also assume monoidal structure on an ambient category, and speak of internal monoids or groups. By adopting this approach the algebraic calculations can be rigorously replaced by graphical manipulations~\cite{selinger:graphicallanguages}. Groupoids in a regular category $\cat{C}$ can equivalently be described as special dagger Frobenius monoids in the monoidal category $\Rel(\cat{C})$ of relations over $\cat{C}$~\cite{heunencontrerascattaneo:groupoids,heunentull:regular}. 
These Frobenius structures (recalled in Section~\ref{sec:groupoids}) were inspired by quantum theory, and this correspondence was proved functorial for regular Mal'tsev categories $\cat{C}$.

In this article we extend this correspondence from regular Mal'tsev categories to the more general regular Goursat categories~\cite{carbonikellypedicchio:goursat}. Mal'tsev categories are those satisfying $2$-permutability, meaning that $R \circ S = S \circ R$ for any pair of equivalence relations $R$ and $S$ on the same object. Goursat categories (recalled in Section~\ref{sec:groupoids}) are only $3$-permutable, meaning that $R \circ S \circ R = S \circ R \circ S$. These form a large class of categories $\cat{C}$ whose category of internal groupoids $\Gpd(\cat{C})$ is regular (as far as we know being the largest class with this property). This ensures that $\Rel(\Gpd(\cat{C}))$ is well-defined, an important fact that fails when $\cat{C}$ is the category $\cat{Set}$ of sets and functions (simply because the ordinary category $\Gpd(\cat{Set})$ of groupoids is not regular). We prove that $\Rel(\Gpd(\cat{C}))$ is equivalent to a category of special dagger Frobenius structures in $\Rel(\cat{C})$ (in Section~\ref{sec:groupoids}).

We then extend these results from groupoids to \emph{connectors}~\cite{bourngran:centrality} (also known as \emph{pregroupoids}~\cite{kock1988generalized}). Internal groupoids were originally studied in differential geometry~\cite{ehresmann:connexions}, homotopy theory~\cite{loday:spaces}, and later also in categorical algebra because of their deep relation to commutators~\cite{janelidzepedicchio:internal}. It has since been realised that commutator theory in Mal'tsev categories can be entirely based on the properties of connectors. Indeed, in any Mal'tsev category there is a unique connector between two equivalence relations $R$ and $S$ precisely when their commutator $[R,S]$ is trivial~\cite{janelidzepedicchio:pseudogroupoids,bourngran:centrality}. Here we show that connectors in $\cat{C}$ can also be described using monoidal methods, namely as normal dagger Frobenius 3-structures in $\Rel(\cat{C})$ (see Section~\ref{sec:connectors}). Whereas Frobenius (2-)structures have a binary multiplication, Frobenius 3-structures are defined by a ternary multiplication. 

Frobenius structures have natural models in settings other than $\Rel(\cat{C})$.  Frobenius 2-structures in the category of Hilbert spaces are (finite-dimensional) C*-algebras~\cite{vicary:quantumalgebras}, and Frobenius 3-structures include Hilbert triple systems and ternary rings of operators in (finite-dimensional) Hilbert spaces, which are studied in geometry and operator algebra~\cite{kaur:ternary,zalar:triplesystems}. We develop some of the theory of abstract Frobenius 3-structures, including a normal form theorem (in Section~\ref{sec:frobenius3}). 
Finally, we study the relationship between Frobenius 2-structures and Frobenius 3-structures in arbitrary monoidal categories (in Section~\ref{sec:2vs3}), generalising that between groupoids and connectors.

Given our generalisation of Frobenius 2-structures to 3-structures, we leave open the natural question of whether there is a useful notion of Frobenius $n$-structure for general $n$. For example, while 2-structures and 3-structures correspond to groupoids and connectors, one might expect Frobenius 4-structures to relate to \emph{pseudogroupoids}~\cite{janelidzepedicchio:pseudogroupoids}. 

\noindent {\em Acknowledgement}. The authors are grateful to the referee for her/his useful comments on a preliminary version of this article.

\section{Monoidal categories of relations}\label{sec:preliminaries}

In this section the notion of regular category and its internal regular logic are briefly recalled. Given a regular category $\cat{C}$ we shall be interested in the construction of the category $\Rel(\cat{C})$ of relations in $\cat{C}$, extending the usual passage from the category $\cat{Set}$ of sets to the category $\Rel$ of relations.

\subsection*{Regular categories}

Recall that an arrow $f \colon A \rightarrow B$ in a category $\cat{C}$ is a \emph{regular epimorphism}  if it is the coequaliser of two arrows in $\cat{C}$. In the category of sets regular epimorphisms are simply surjective maps; more generally, in any algebraic variety (in the sense of universal algebra) regular epimorphisms are the same thing as surjective homomorphisms.

A finitely complete category $\cat{C}$ is \emph{regular} when any map factorises as a regular epimorphism followed by a monomorphism, and regular epimorphisms are pullback-stable, \ie~in a pullback square
\[\begin{tikzpicture}[xscale=3,yscale=1.5]
  \node (tl) at (0,1) {$E \times_B A$};
  \node (tr) at (1,1) {$A$};
  \node (bl) at (0,0) {$E$};
  \node (br) at (1,0) {$B$};
  \draw[->] (tl) to node[above]{$p_2$} (tr);
  \draw[->>] (tr) to node[right]{$g$} (br);
  \draw[->>] (tl) to node[left]{$p_1$} (bl);
  \draw[->] (bl) to node[below]{$p$} (br);
\end{tikzpicture}
\]
the arrow $p_1$ is a regular epimorphism whenever $g$ is a regular epimorphism. 
It is easy to see that the (regular epimorphism, mononomorphism) factorisation of an arrow $f$
\[\begin{tikzpicture}[xscale=1.5]
  \node (l) at (0,0) {$A$};
  \node (r) at (2,0) {$B$};
  \node (m) at (1,-1) {$I$};
  \draw[->] (l) to node[above]{$f$} (r);
  \draw[->>] (l) to node[below]{$p$} (m);
  \draw[>->] (m) to node[below]{$i$} (r);
\end{tikzpicture}\]
is unique, up to isomorphism. The subobject $i \colon I \rightarrow B$ is called the (regular) \emph{image} of the arrow $f$. In the category of sets the map $i \colon I \rightarrow B$ is indeed the inclusion in $B$ of the image $I = \{ f(a) \, \mid a \in A \}$ of $f$.

Examples of regular categories abound in mathematics: any elementary topos (and its dual category), such as the category $\cat{Set}$ of sets (and its dual $\cat{Set}^{op}$); any abelian category, such as the category $\cat{Mod}_R$ of modules over a ring $R$; any algebraic variety (in the sense of universal algebra) such as the categories $\cat{Gp}$ of groups,  $\cat{Mon}$ of monoids, $\cat{Rng}$ of rings, or $\cat{Vect}_k$ of vector spaces on a field $k$; any category monadic over $\cat{Set}$, such as the category of compact Hausdorff spaces, or the category of C$^*$-algebras. If $\cat{D}$ is a regular category, any functor category $[\cat{C},\cat{D}]$ is regular.

\subsection*{Categories of relations}

Given a regular category $\cat{C}$ the objects of the category $\Rel(\cat{C})$ of relations in $\cat{C}$ are the same as the objects of $\cat{C}$, while a morphism from an object $A$ to an object $B$ is simply a \emph{relation} $(r_1, r_2) \colon R \rightarrowtail A \times B$ from $A$ to $B$, \ie~a \emph{subobject} of $A \times B$, represented by this monomorphism. We often denote morphisms in $\Rel(\catC)$ simply as $R \colon A \relto B$.
The composite of a relation $(r_1, r_2) \colon R \rightarrowtail A \times B$ and a relation $(s_1, s_2) \colon S \rightarrowtail B \times C$ in $\Rel(\cat{C})$ is the relation $S \circ R \rightarrowtail A \times C$ obtained as the image in the (regular epimorphism, monomorphism) factorisation of the canonical morphism $R \times_B S \to A \times C$, giving rise to the diagram
\begin{equation}\label{eq:pullbackcomposition}
\begin{pic}[font=\small,xscale=2]
  \node (x) at (-2,0) {$A$};
  \node (y) at (0,0) {$B$};
  \node (z) at (2,0) {$C$};
  \node (r) at (-1,1) {$R$};
  \node (s) at (1,1) {$S$};
  \node (p) at (0,2) {$R \times_B S$};
  \draw[->] (r) to node[left=1.5mm]{$r_1$} (x);
  \draw[->] (r) to node[right=1mm]{$r_2$} (y);
  \draw[->] (s) to node[left=1.5mm]{$s_1$} (y);
  \draw[->] (s) to node[right=1mm]{$s_2$} (z);
  \draw[->] (p) to (r);
  \draw[->] (p) to (s);
 \node (sr) at (0,1) {$S \circ R$};
 \draw[->>, dotted] (p) to (sr);
 \draw[->,dotted] (sr) to (x);
 \draw[->,dotted] (sr) to (z);
\end{pic}\end{equation}
where $R \times_B S$ is the pullback of $r_2$ and $s_1$. As observed above this image is uniquely defined (up to isomorphism). Since in $\cat{C}$ pullbacks of regular epimorphisms are regular epimorphisms this composition is associative, giving a well-defined category $\cat{\Rel (\cat{C})}$, where the identity on an object $A$ is given by the discrete relation $\Delta = (\id[A], \id[A])\colon A \rightarrow A \times A$.

In $\cat{Set}$, we can describe~\eqref{eq:pullbackcomposition} using the formula
\begin{equation}\label{eq:regularcomposition}
  S \circ R = \{ (a,c) \in A \times C \mid (\exists b \in B)\; R(a,b) \wedge S(b,c) \}
\end{equation}
and the category ${ \Rel(\cat{Set})}$ becomes the usual category $\Rel$ of relations. 

The description here above of the composite $ S \circ R$ makes sense in any regular category $\cat{C}$, via its \emph{regular logic}: this is the fragment of first order logic whose formulae use only the connectives $\exists$ and $\wedge$, and equality. Any regular formula $\phi$ whose function symbols are morphisms in $\cat{C}$ and whose relation symbols are subobjects in $\cat{C}$ inductively defines a subobject $$\sem{ (a_1,\ldots,a_n) \in A_1 \times \ldots \times A_n \mid \phi \ } \rightarrowtail A_1 \times \cdots \times A_n$$ as follows. Equality $\sem{a \in A \mid f(a)=g(a)}$ is interpreted as the equaliser of parallel arrows $f,g \colon A \to B$. Conjunction $\sem{a \in A \mid R(a) \wedge S(a)}$ is interpreted as the pullback of $R,S \rightarrowtail A$. Existential quantification $\sem{a \in A \mid (\exists b \in B)\, R(a,b)}$ is interpreted as the regular image 
of the composite $\smash{\xymatrix{ R \quad \ar@{>->}[r] & A \times B \ar[r]^-{\pi_1}& A}}$.
Whenever one can derive an implication $\phi \Rightarrow \psi$ in regular logic, it follows that $\sem{\phi} \leq \sem{\psi}$ as subobjects. This allows us to state and prove (regular) theorems as if reasoning in $\Rel(\cat{Set})$. 

For example, as in $\cat{Set}$, a relation $R \colon A \relto A$ is called \emph{reflexive} when $R(a,a)$ holds $\forall a \in A$, \emph{symmetric} when $R(a,b) \iff R(b,a)$, and \emph{transitive} when $R(a,b) \wedge R(b,c) \Rightarrow R(a,c)$, equivalently $R \circ R \leq R$. A symmetric, reflexive and transitive relation is called an \emph{equivalence relation}.

\subsection*{Compact dagger categories}

Categories of relations $\Rel(\cat{C})$ in a regular category $\cat{C}$ are automatically monoidal, and satisfy more properties.

\begin{definition}
  A \emph{(symmetric) monoidal dagger category} is a (symmetric) mon\-oidal category $(\cat{D}, \otimes, I, \alpha, \rho, \lambda)$ equipped with a functor $(-)^\dag \colon \cat{D}^\op \to \cat{D}$ satisfying $A^\dag=A$ on objects and $f^{\dag\dag}=f$ on morphisms, such that $(f \otimes g)^\dag = f^\dag \otimes g^\dag$ and the coherence isomorphisms $\alpha$ (expressing associativity), $\lambda$ and $\rho$ (expressing the unit axioms), satisfy $\alpha^{-1}=\alpha^\dag$, $\lambda^{-1}=\lambda^\dag$, $\rho^{-1}=\rho^\dag$, and (in the symmetric case) $\sigma^{-1}=\sigma^\dag$.

  A \emph{left dual} for an object $A$ in a monoidal category is an object $A^*$ together with morphisms $\eta \colon I \to A^* \otimes A$ and $\varepsilon \colon A \otimes A^* \to I$ satisfying $\id[A] = (\varepsilon \otimes \id[A]) \circ (\id[A] \otimes \eta)$ and $\id[A^*] = (\id[A^*] \otimes \varepsilon) \circ (\eta \otimes \id[A^*])$. 
  A \emph{(two-sided) dual} additionally comes with morphisms $\eta' \colon I \to A \otimes A^*$ and $\varepsilon' \colon A^* \otimes A \to I$ making $A$ a left dual for $A^*$. 
 In a monoidal dagger category, it is a \emph{dagger dual} when additionally $\eta'=\varepsilon^{\dag}$ and $\varepsilon'=\eta^{\dag}$. In a symmetric monoidal category, a left dual is \emph{symmetric} when $\varepsilon = \eta^\dag \circ \sigma$.
  A \emph{compact dagger category} $\cat{D}$ is a symmetric monoidal dagger category in which every object has a symmetric dagger dual.
\end{definition}

Whenever objects $A$ and $B$ have (left) duals, morphisms $f \colon A \to B$ are in bijection with morphisms $f^* \colon B^*  \to A^*$, where 
\[
f^* = (\id[A^*] \otimes \varepsilon_B) \circ  (\id[A^*] \otimes f \otimes \id[B^*]) \circ (\eta_A \otimes \id[B^*])\text.
\] 
Similarly, such morphisms are in bijection with morphisms $I \to A^* \otimes B$. 
In the case of dagger duals, we define $f_* = (f^*)^{\dag}$.


\begin{example} 
  For any regular category $\cat{C}$, the category $\Rel(\catC)$ is a compact dagger category.
  The monoidal product in $\Rel(\cat{C})$ is provided by the product of $\cat{C}$, with $I=1$.
  The dagger is denoted $(-)^\oprel$ and given by $$\big(R \rightarrowtail A \times B \big)^\oprel = \big(R \rightarrowtail A \times B \simeq B \times A\big).$$
  Finally, symmetric dagger dual objects are $A^* = A$ with $\eta = \sem{(a,a) \mid a \in A} \rightarrowtail A \times A$.
\end{example}

\begin{example} \label{ex:hilb_fhilb}
The category $\Hilb$ of (complex) Hilbert spaces and continuous linear maps is a symmetric monoidal dagger category. The dagger is given by the adjoint of a linear map, and the monoidal product by the Hilbert space tensor product. The objects with duals in $\Hilb$ are precisely the finite-dimensional Hilbert spaces, and these form a full compact dagger subcategory $\FHilb$.
\end{example}

\subsection*{Graphical Calculus}

Monoidal dagger categories come with a \emph{graphical calculus}, given as follows. Morphisms $f \colon A \to B$ are drawn as
$\setlength\morphismheight{3mm}\begin{pic}
  \node[morphism,font=\tiny] (f) at (0,0) {$f$};
  \draw (f.south) to +(0,-.1)node[below] {$A$};
  \draw (f.north) to +(0,.1) node[above] {$B$};
\end{pic}$, with:
\[
  \begin{pic}
    \node[morphism] (f) {$g \circ f$};
    \draw (f.south) to +(0,-.55) node[below] {$A$};
    \draw (f.north) to +(0,.55) node[above] {$C$};
  \end{pic}
  = 
  \begin{pic}
    \node[morphism] (g) at (0,.75) {$g\vphantom{f}$};
    \node[morphism] (f) at (0,0) {$f$};
    \draw (f.south) to +(0,-.2) node[below] {$A$};
    \draw (g.south) to node[right] {$B$} (f.north);
    \draw (g.north) to +(0,.2) node[above] {$C$};
  \end{pic}
  \qquad\qquad
  \begin{pic}
    \node[morphism] (f) {$f \otimes g$};
    \draw (f.south) to +(0,-.55) node[below] {$A \otimes C$};
    \draw (f.north) to +(0,.55) node[above] {$B \otimes D$};
  \end{pic}
  = 
  \begin{pic}
    \node[morphism] (f) at (-.4,0) {$f$};
    \node[morphism] (g) at (.4,0) {$g\vphantom{f}$};
    \draw (f.south) to +(0,-.55) node[below] {$A$};
    \draw (f.north) to +(0,.55) node[above] {$B$};
    \draw (g.south) to +(0,-.55) node[below] {$C$};
    \draw (g.north) to +(0,.55) node[above] {$D$};
  \end{pic}
  \qquad\qquad
  \begin{pic}
    \node[morphism] (f) {$f^\dag$};
    \draw (f.south) to +(0,-.55) node[below] {$B$};
    \draw (f.north) to +(0,.55) node[above] {$A$};
  \end{pic}
  =
  \begin{pic}
    \node[morphism,hflip] (f) {$f$};
    \draw (f.south) to +(0,-.55) node[below] {$B$};
    \draw (f.north) to +(0,.55) node[above] {$A$};
  \end{pic}
\]
The identity $A \to A$ is just the line, $\begin{aligned}\begin{pic}
  \draw (0,-.15) to (0,.15);
\end{pic}\end{aligned}$\;; the (identity on) the monoidal unit object $I$ is the empty picture, 
the swap map $\sigma$ becomes $\begin{aligned}\begin{pic}[scale=.25]
  \draw (0,-.5) to[out=80,in=-100] (1,.5);
  \draw (1,-.5) to[out=100,in=-80] (0,.5);
\end{pic}\end{aligned}$. 
To indicate whether a wire represents an object $A$ or its dual $A^*$, we decorate it with a small arrow, pointing upwards or downwards, respectively.
The canonical morphisms $\eta \colon I \to A^* \otimes A$ and $\varepsilon \colon A \otimes A^* \to I$ are depicted as `cups' and `caps' diagrammatically, so that the defining equations then become:
\[
  \begin{pic}[scale=.5]
    \draw[arrow=.36, arrow=.66] (0,0) to (0,1) to[out=90,in=90,looseness=2] (1,1) to[out=-90,in=-90,looseness=2] (2,1) to (2,2);
  \end{pic}
  =
  \begin{pic}[scale=.5]
    \draw[arrow=.5] (0,0) to (0,2);
  \end{pic}
  \qquad \qquad
  \begin{pic}[scale=.5]
    \draw[reverse arrow=.37, reverse arrow=.67] (0,0) to (0,1) to[out=90,in=90,looseness=2] (-1,1) to[out=-90,in=-90,looseness=2] (-2,1) to (-2,2);
  \end{pic}
  =
  \begin{pic}[scale=.5]
    \draw[reverse arrow=.5] (0,0) to (0,2);
  \end{pic}
\]
%
For many more examples of dagger categories and their theory we refer to~\cite{heunenvicary:cqm}.

\section{Frobenius structures} \label{sec:frobenius2}

The main connection between monoidal methods and categorical algebra this paper describes rephrases internal groupoids in a regular category $\cat{C}$ as certain monoids in $\Rel(\cat{C})$. The precise structure we need is the following.

\begin{definition}\label{def:frobeniusstructure}
  A \emph{Frobenius structure} in a monoidal category consists of a monoid $\mu \colon A \otimes A \to A$ with unit $\eta \colon I \to A$ and a comonoid $\delta \colon A \to A \otimes A$ with counit $\varepsilon \colon A \to I$ on the same object $A$ that satisfy the \emph{Frobenius law} $(\mu \otimes \id[A]) \circ (\id[A] \otimes \delta) = (\id[A] \otimes \mu) \circ (\delta \otimes \id[A])$.
  We will draw the multiplication, unit, comultiplication, and counit, as $\tinymult$, $\tinyunit$, $\tinycomult$, and $\tinycounit$, so that the Frobenius law becomes:
  \begin{align} \label{eq:frobenius:binary}
    \begin{pic}[scale=.6]
      \draw (0,0) to (0,1) to[out=90,in=180] (.5,1.5) to (.5,2);
      \draw (.5,1.5) to[out=0,in=90] (1,1) to[out=-90,in=180] (1.5,.5) to (1.5,0);
      \draw (1.5,.5) to[out=0,in=-90] (2,1) to (2,2);
      \node[dot] at (.5,1.5) {};
      \node[dot] at (1.5,.5) {};
    \end{pic}
    & \quad = \quad
    \begin{pic}[scale = 0.6, xscale=-1]
      \draw (0,0) to (0,1) to[out=90,in=180] (.5,1.5) to (.5,2);
      \draw (.5,1.5) to[out=0,in=90] (1,1) to[out=-90,in=180] (1.5,.5) to (1.5,0);
      \draw (1.5,.5) to[out=0,in=-90] (2,1) to (2,2);
      \node[dot] at (.5,1.5) {};
      \node[dot] at (1.5,.5) {};
    \end{pic} \\
  \intertext{It is \emph{special} when $\mu \circ \delta = \id[A]$:}
    \begin{pic}
      \node[dot] (t) at (0,.6) {};
      \node[dot] (b) at (0,0) {};
      \draw (b.south) to +(0,-0.3);
      \draw (t.north) to +(0,+0.3);
      \draw (b.east) to[out=30,in=-90] +(.15,.3);
      \draw (t.east) to[out=-30,in=90] +(.15,-.3);
      \draw (b.west) to[out=-210,in=-90] +(-.15,.3);
      \draw (t.west) to[out=210,in=90] +(-.15,-.3);
    \end{pic}
    & \quad = \quad
    \begin{pic}
      \draw(0,0) to (0,1.5);
    \end{pic}
    \label{eq:special:binary}
  \end{align}
  A \emph{dagger} Frobenius structure is a Frobenius structure in a monoidal dagger category that additionally satisfies $\delta=\mu^\dag$ and $\varepsilon = \eta^\dag$.
\end{definition}

Because the unit of a monoid is completely determined by the multiplication, we will often write $(A,\tinymult)$ for a dagger Frobenius structure.

\begin{remark}\label{rem:frobeniuslaw}
  Associativity implies that the two maps in the Frobenius law~\eqref{eq:frobenius:binary} also equal $\delta \circ \mu$. For dagger Frobenius structures, conversely, the equation $\delta \circ \mu = (\id \otimes \mu) \circ (\delta \otimes \id)$ implies the Frobenius law~\eqref{eq:frobenius:binary}. See \textit{e.g.}~\cite{heunenvicary:cqm}.
\end{remark}

In the category of finite-dimensional Hilbert spaces and linear maps, special dagger Frobenius structures correspond precisely to finite-dimensional C*-algebras~\cite{vicary:quantumalgebras}. In the category of sets and relations, they correspond precisely to small groupoids~\cite{heunencontrerascattaneo:groupoids}. Section~\ref{sec:groupoids} below will extend this to categories of relations over other regular base categories.

\begin{example} \label{ex:pants:binary}
Dagger dual objects $A$, $A^*$ in a monoidal dagger category
   give $A^* \otimes A$ a dagger Frobenius structure with multiplication 
  $\begin{pic}[yscale=.4,xscale=.3]
    \draw[reverse arrow=.5] (0,0) to[out=90,in=-90] (1,1);
    \draw[reverse arrow=.5] (1,0) to[out=90,in=90] (2,0);
    \draw[arrow=.5] (3,0) to[out=90,in=-90] (2,1);
  \end{pic}$ 
  and unit
  $\begin{pic}[scale=.3]
    \draw[arrow=.55] (0,0) to[out=-90,in=-90,looseness=1.5] (1,0);
  \end{pic}$.
\end{example}

The Frobenius structures in a monoidal dagger category form the objects of a category in various ways. One pertinent choice of morphisms, inspired by physics, is the following.

\begin{definition}\label{def:cp}
  For a monoidal dagger category $\cat{C}$, write $\CP(\cat{C})$ for the category of special dagger Frobenius structures $(A, \tinymult[whitedot])$ in $\cat{C}$ and \emph{completely positive} morphisms $f \colon (A, \tinymult[whitedot]) \to (B, \tinymult[dot])$, that is, morphisms $f \colon A \to B$ in $\cat{C}$ such that
  \begin{equation}\label{eq:cpcondition}
    \begin{pic}[xscale=-1]
     \draw (0,.2) to (0,1) to[out=90,in=180] (.5,1.5) to (.5,1.8);
     \draw (.5,1.5) to[out=0,in=90] (1,1) to[out=-90,in=180] (1.5,.5) to (1.5,.2);
     \draw (1.5,.5) to[out=0,in=-90] (2,1) to (2,1.8);
     \node[morphism] at (1,1) {$f$};
     \node[dot] at (.5,1.5) {};
     \node[whitedot] at (1.5,.5) {};
    \end{pic}
    \quad = \quad
    \begin{pic}
     \node[morphism, hflip] (g) at (0,1) {$g$};
     \node[morphism] (f) at (0,.4) {$g$};
     \draw ([xshift=1mm]f.south east) to +(0,-.3);
     \draw ([xshift=-1mm]f.south west) to +(0,-.3);
     \draw (g.south) to (f.north);
     \draw ([xshift=1mm]g.north east) to +(0,.3);
     \draw ([xshift=-1mm]g.north west) to +(0,.3);
    \end{pic}
  \end{equation}
  for some $g \colon A \otimes B \to X$ in $\cat{C}$. Write $C(f)$ for the left-hand side of~\eqref{eq:cpcondition}.
\end{definition}

This is indeed a well-defined category. In fact, if $\cat{C}$ is a symmetric monoidal dagger category, then so is $\CP(\cat{C})$~\cite{coeckeheunenkissinger:cpstar,heunenvicary:cqm}.
\section{Groupoids}\label{sec:groupoids}

This section makes precise the connection between Frobenius structures in $\Rel(\cat{C})$ and groupoids in $\cat{C}$.

\begin{definition} 
  An \emph{internal category} $C$ in a finitely complete category consists of
  objects $C_0$ (objects) and $C_1$ (morphisms), and morphisms $s$ (source), $t$ (target), $u$ (identity), and $m$ (composition):
  \[\begin{pic}[font=\small,xscale=2]
    \node (0) at (-1,0) {$C_0$};
    \node (1) at (0,0) {$C_1$};
    \node (11) at (1,0) {$C_1 \times_{C_0} C_1$};
    \draw[->] (11) to node[above=-1mm] {$m$} (1);
    \draw[->] (0) to node[above=-1mm] {$u$} (1);
    \draw[->] ([yshift=-3mm]1.west) to node[above=-1mm] {$s$} ([yshift=-3mm]0.east);
    \draw[->] ([yshift=3mm]1.west) to node[above=-1mm] {$t$} ([yshift=3mm]0.east);
  \end{pic}\]
  where $C_1 \times_{C_0} C_1$ is the pullback of $s$ and $t$ (composable pairs of morphisms). These morphisms must satisfy familiar equations representing associativity of composition and usual behaviour of identities. 
  An \emph{internal groupoid} additionally has an inversion morphism $i \colon C_1 \to C_1$ satisfying usual axioms. 
\end{definition}
Internal functors between internal groupoids are defined in an obvious manner, and we write $\Gpd(\catC)$ for the category of internal groupoids in $\catC$.

\begin{example} 
 Internal groupoids in $\cat{Set}$ are just (small) groupoids.
 Internal groupoids in the category $\cat{Gp}$ of groups are known to form a category equivalent to the category $\cat{CrMod}$ of \emph{crossed modules}~\cite{BrownSpencer}. More generally, internal groupoids in any congruence modular variety admit a nice description in terms of the universal algebraic commutator of congruences (see \cite{janelidzepedicchio:internal, GranGroupoids}, and references therein). Topological groupoids and Lie groupoids naturally occur in algebraic and differential topology.
 \end{example}

\begin{example}\label{ex:indiscretegroupoid}
  Any object $A$ in a category with finite products induces the \emph{indiscrete groupoid} 
  \[\begin{pic}[font=\small,xscale=2.5]
    \node (0) at (-1,0) {$A$};
    \node (1) at (0,0) {$A \times A$};
    \node (11) at (1,0) {$A \times A \times A$};
    \draw[->] (11) to node[above=-1mm] {$\pi_{13}$} (1);
    \draw[->] (0) to node[above=-1mm] {$\Delta$} (1);
    \draw[->] ([yshift=-3mm]1.west) to node[above=-1mm] {$\pi_1$} ([yshift=-3mm]0.east);
    \draw[->] ([yshift=3mm]1.west) to node[above=-1mm] {$\pi_2$} ([yshift=3mm]0.east);
    \draw[->] (1.110) to[out=110,in=180] +(.05,.3) to[out=0,in=45] node[right=-.5mm]{$\sigma$} (1.70);
  \end{pic}\]
  on $A$, which we may interpret as having a unique morphism from $a$ to $b$ for each pair $(a,b) \in A \times A$. The identities are given by the diagonal $\Delta \colon A \to A \times A$, while the inversion is the swap map $\sigma = (\pi_2, \pi_1) \colon A \times A \to A \times A$.
\end{example}


We can now give our first result relating groupoids in a regular category -- an internal structure from categorical algebra -- with Frobenius structures in the category of relations on that category -- a structure from monoidal category theory. The result first appears in \cite{heunentull:regular} with essentially the same proof. This extends the result for groupoids in $\cat{Set}$ from~\cite{heunencontrerascattaneo:groupoids}, which was first shown for abelian groupoids in~\cite{pavlovic2009quantum}.

\begin{theorem}\label{thm:frobeniusstructuresaregroupoids} 
  There is a one-to-one correspondence between groupoids in a regular category $\cat{C}$ and special dagger Frobenius structures in $\Rel(\cat{C})$:
  \begin{itemize}
    \item the composition $A \times_{A_0} A \to A$ of a groupoid in $\catC$ with morphisms $A_1 = A$ corresponds to the multiplication $A \times A \relto A$ of a Frobenius structure in $\Rel(\cat{C})$;
    \item the identity $A_0 \rightarrowtail A$ of the groupoid corresponds to the unit $I \relto A$ of the Frobenius structure in $\Rel(\cat{C})$.
  \end{itemize}
\end{theorem}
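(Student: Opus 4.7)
The plan is to construct explicit maps between groupoid structures on an object $A$ in $\catC$ and special dagger Frobenius structures on $A$ in $\Rel(\catC)$, then show the two constructions are mutually inverse, working throughout in the internal regular logic of $\catC$ so that the proof looks formally just like its $\catC = \cat{Set}$ shadow.

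\textbf{From a groupoid to a Frobenius structure.} Given a groupoid with $C_1 = A$, source $s$, target $t$, identity $u$, and composition $m$, I would define $\mu \colon A \otimes A \relto A$ to be the subobject $\sem{(g,f,h) \mid s(g) = t(f),\ h = m(g,f)} \rightarrowtail A \times A \times A$, and $\eta \colon I \relto A$ to be the monomorphism $u \colon C_0 \rightarrowtail A$ (which is monic since $s \circ u = \id$). Setting $\delta := \mu^{\oprel}$ and $\varepsilon := \eta^{\oprel}$, associativity and the unit laws for $(A, \mu, \eta)$ are direct translations of the groupoid axioms. For the Frobenius law~\eqref{eq:frobenius:binary}, I would compute both sides via~\eqref{eq:pullbackcomposition} and observe that each relates $(a,b)$ to $(c,d)$ exactly when $m(a,b) = m(c,d)$, with the middle composable element produced using the groupoid inverse $i$. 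Specialness~\eqref{eq:special:binary} follows from functionality of $m$ together with the decomposition $a = m(a, u(s(a)))$.

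\textbf{From a Frobenius structure to a groupoid.} Conversely, given $(A, \tinymult)$, I would take $C_0 \rightarrowtail A$ to be the subobject underlying $\eta$, recovering $u$ as its inclusion. The unit law $\mu \circ (\id \otimes \eta) = \id$, read in regular logic, asserts that for each $a \in A$ there is a unique $e \in C_0$ with $\mu(a, e, a)$; this assignment defines a genuine morphism $s \colon A \to C_0$ of $\catC$, and the symmetric argument yields $t$. Specialness together with the Frobenius law then forces $\mu$ to be a partial map whose domain is the pullback $A \times_{C_0} A$ of $s$ and $t$, giving the composition $m$. The inverse $i$ arises by inserting $\eta$ into the Frobenius configuration: the resulting composite is shown, using~\eqref{eq:frobenius:binary} and~\eqref{eq:special:binary}, to be a total morphism $A \to A$ satisfying the two-sided inverse equation. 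Once these data are in hand, the remaining groupoid axioms and the mutual inverse property of the two constructions amount to routine regular-logic bookkeeping.

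\textbf{Main obstacle.} The delicate step will be establishing \emph{functionality}: a priori the Frobenius multiplication is only a relation, so one must deduce from specialness and the Frobenius law that $\mu \circ \mu^{\oprel}$ restricts to the identity on the image of $\delta$, and that $\mu^{\oprel} \circ \mu$ is the characteristic relation of $A \times_{C_0} A$. This is the one place where the dagger Frobenius axioms genuinely interact rather than translate component-by-component, and it is precisely what allows the passage from an arbitrary relation on $A \times A$ to an honest composition morphism in $\catC$. A closely related point is verifying that the source and target relations extracted from the unit laws are in fact total and single-valued, so that they descend from morphisms of $\Rel(\catC)$ to morphisms of $\catC$ along the canonical embedding. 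With these functionality lemmas in place, the template of the $\cat{Set}$-based argument of~\cite{heunencontrerascattaneo:groupoids} can be followed essentially verbatim inside the regular logic of $\catC$.
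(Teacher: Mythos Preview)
Your proposal is correct and follows essentially the same route as the paper: translate the Frobenius axioms into regular-logic formulae, extract single-valued and total source, target, and inverse relations, verify the groupoid equations, and observe that the $\cat{Set}$ argument of~\cite{heunencontrerascattaneo:groupoids} goes through verbatim in the internal logic. Two small corrections you will discover when writing it out: single-valuedness of $\mu$ follows from speciality~\eqref{eq:special:binary} alone (the Frobenius law is not needed for this step), and uniqueness of the source element $e$ with $\mu(a,e,a)$ does not come directly from the unit law but requires associativity as well (the paper argues that if $(a,x)\!\downarrow$ and $(a,y)\!\downarrow$ with $x,y\in U$ then $(x,y)\!\downarrow$, whence $x=y$).
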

\begin{proof}
  We will show that the argument of~\cite{heunencontrerascattaneo:groupoids} can be made to use only regular logic, and therefore holds within any regular category $\cat{C}$. Let $(A, \tinymult)$ be a special dagger Frobenius structure in $\Rel(\cat{C})$. Say $\tinymult = (M \colon A \times A \relto A)$, and $\tinyunit = (U \rightarrowtail A)$. The other equations of Definition~\ref{def:frobeniusstructure} translate to:
  \begin{align}
  (\exists x \in U) \  M(x,a,a') & \iff a = a' \label{units2_monproof} \\ 
  (\exists x \in U) \  M(a,x,a') & \iff a = a' \label{units1_monproof} \\
  (\exists e \in A) \ M(a,b,e) \wedge M(e,c,d) & \iff  (\exists e \in A) \  M(a,e,d) \wedge M(b,c,e) \label{assoc_monproof} \\
  (\exists b,c \in A) \ M(b,c,a) \wedge M(b,c,a') & \iff  a=a' \label{special_monproof} & \\
  (\exists e \in A) \ M(a,e,c) \wedge M(e,d,b) & \iff  (\exists e \in A) \ M(c,e,a) \wedge M(e,b,d) \label{frob_monproof}
  \end{align}   
  It follows from~\eqref{special_monproof} that as a relation $M$ is single-valued and hence takes the form $(A \times A \leftarrowtail B \sxto{m} A)$, for some subobject $B$ of $A \times A$ and morphism $m$, in $\cat{C}$. Write $(a,b)\defined$ for $B(a,b)$, so that  $M(a,b,c)$ means $(a,b)\defined$ and $m(a,b) = c$. Define relations $S \colon A \relto U$, $T \colon A \relto U$, and $I \colon A \relto A$ by 
   \begin{align*}
    S & = \sem{ (a,x) \in A \times U \mid  (a,x)\defined }\text{,} \\
    T & = \sem{ (a,y) \in A \times U \mid  (y,a)\defined }\text{,} \\
    I & = \sem{ (a,b) \in A \times A \mid  (\exists x,y \in U \   M(a,b,x) \wedge M(b,a,y) }\text{.}
  \end{align*}
  It suffices to show these relations are total and single-valued, as they then correspond uniquely to morphisms $s$, $t$, and $i$ in $\cat{C}$ defining the data of a groupoid 
  \[
  \begin{pic}[font=\small,xscale=2.5]
    \node (0) at (-1,0) {$U$};
    \node (1) at (0,0) {$A$};
    \node (11) at (1,0) {$A \times_U A = B$, };
    \draw[->] (11) to node[above=-1mm] {$m$} (1);
    \draw[>->] (0) to (1);
    \draw[->] ([yshift=-1.5mm]1.west) to node[below=-1mm] {$s$} ([yshift=-1.5mm]0.east);
    \draw[->] ([yshift=1.5mm]1.west) to node[above=-1mm] {$t$} ([yshift=1.5mm]0.east);
    \draw[->] (1.110) to[out=110,in=180] +(.05,.3) to[out=0,in=45] node[right=-1mm]{$i$} (1.70);
  \end{pic}
  \]
  where we must also show that $B$ is in fact the pullback of $s$ and $t$. 

  From the unit laws~\eqref{units2_monproof} and \eqref{units1_monproof} it follows that elements of $U$ only compose when they are equal, i.e.~$(\forall x,y \in U) (x,y) \downarrow \implies x = y$.

Now if $(a,x)\defined$ and $(a,y)\defined$, then $(x,y)\defined$ by associativity, and so $x=y$. Therefore $S$ is total and single-valued. An analogous argument holds for $T$.
  Instantiating~\eqref{frob_monproof} with $b=s(a)$, $c=t(a)$ and $d=a$ shows that $I$ is total:
  \[
    (\exists e \in A)  \   M(a,e,s(a)) \wedge M(e,a,t(a))\text{,}
  \]
  that is, `every morphism has an inverse'. Uniqueness of inverses then follows as for any category, once we have shown that the composition $m$ is associative. 
  Writing $a^{-1}$ for any inverse of $a$, associativity~\eqref{assoc_monproof} gives $m(a^{-1},a) = s(a)$, and it follows that $a$ and $b$ are composable whenever $s(a) = t(b)$. 
  Conversely, when $a$ and $b$ are composable, $m(a,b) = m(m(a,s(a)),b) = m(a, m(s(a),b))$ by~\eqref{assoc_monproof} and so $s(a) = t(b)$. Hence $B$ is indeed the pullback of $s$ and $t$. 

  It remains to verify that these morphisms satisfy the equations defining an internal groupoid. Associativity of $m$ only requires further that $(a,b)\defined$ and $(b,c)\defined$ imply $(m(a,b),c)\defined$. From~\eqref{assoc_monproof} we find that $(m(a,b), s(b))\defined$ whenever $a$ and $b$ compose, and hence $s(m(a,b)) = s(b)$ as desired. Finally, that inverses behave as expected follows from the definition of $I$. 

  Thus any dagger special Frobenius structure in $\Rel(\cat{C})$ defines a groupoid in $\cat{C}$. This is the only possible choice of $s$, $t$ and $i$ compatible with $M$ and $U$ since any groupoid operations must satisfy the formulae defining $S$, $T$ and $I$. 

  Conversely, let us check that any groupoid defines such a Frobenius structure. Speciality~\eqref{special_monproof} simply states that the relation $\tinymult$ is single-valued and surjective, which holds since $m(a,s(a)) = a$ for any $a$ in $A$. 
  Equation~\eqref{assoc_monproof} follows from associativity of composition $m$.  
  Unitality~\eqref{units2_monproof} and~\eqref{units1_monproof} follows from the equations satisfied by $u$, $s$ and $t$. 
  Finally, the Frobenius law~\eqref{frob_monproof} simply amounts to the statement that $m(a^{-1},c) = m(b,d^{-1})$ if and only if $m(c^{-1},a) = m(d,b^{-1})$.  
\end{proof}

Our next goal is to make the correspondence of Theorem~\ref{thm:frobeniusstructuresaregroupoids} into an equivalence of categories. For this, we restrict attention to a special class of regular categories.

\begin{definition}\cite{carbonikellypedicchio:goursat} 
  A \emph{Goursat category} is a regular category that satisfies any of the following equivalent conditions:
  \begin{itemize}
    \item $S\circ R \circ S=R\circ S\circ R$ for all equivalence relations $R,S \colon A \relto A$;
    \item every relation $R \colon A \relto B$ satisfies $R^\oprel \circ R \circ R^\oprel \circ R=R^\oprel \circ R$, i.e.: 
      \begin{equation} \label{eq:difunctionalrelation}
        R(a,b), R(c,b), R(c,d), R(e,d) \implies (\exists f) R(a,f), R(e,f)\text{.}
      \end{equation}
  \end{itemize}
\end{definition}

The first condition is known as \emph{3-permutability}. As observed in \cite{carbonikellypedicchio:goursat}, most natural examples of Goursat categories are in fact \emph{Mal'tsev} categories, meaning they satisfy the stronger condition of 2-permutability, with $R \circ S = S \circ R$ for equivalence relations $R$, $S$ on the same object.
Familiar varieties such as groups, quasi-groups, rings, associative algebras, and Heyting algebras are Mal'tsev categories, as is more generally any variety whose algebraic theory contains a ternary term $p(x,y,z)$ such that $p(x,y,y) = x$ and $p(x,x,y)= y$~\cite{Smith}. In the group case such a term is obtained by defining $p(x,y,z) = x \cdot y^{-1} \cdot z$, where we denote the group operation multiplicatively. Any abelian category is a Mal'tsev category, as is the dual category of any elementary topos, or the category of C$^*$-algebras. On the contrary, neither the category $\cat{Set}$ of sets nor the category $\cat{CHaus}$ of compact Hausdorff spaces are Mal'tsev categories. An example of a Goursat category which is not a Mal'tsev category is provided by the variety of implication algebras. 

Similarly to what happens in the Mal'tsev case, also $3$-permutable algebraic varieties can be characterised in terms of the existence of two ternary operations $p$ and $q$ satisfying the identities $p(x,y,y) = x$, $p(x,x,y) = q(x,y,y)$ and $q(x,x,y)= y$. In recent years some new categorical characterisations of Mal'tsev and Goursat categories have been discovered (see~\cite{granrodelo:universal, granrodelo:beckchevalley, GranRodeloNguefeu}, and references therein).

\begin{lemma}\label{lem:goursat}
  A regular category $\cat{C}$ is a Goursat category if and only if $\Gpd(\cat{C})$ is regular Goursat.
\end{lemma}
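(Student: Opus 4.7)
The plan is to prove the two implications separately, since they have rather different characters. For the forward direction I would first argue that $\Gpd(\catC)$ is regular whenever $\catC$ is regular Goursat, and then check the relational Goursat identity componentwise. For the converse I would exploit the fully faithful embedding of $\catC$ into $\Gpd(\catC)$ as discrete groupoids.

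For the forward direction, finite limits in $\Gpd(\catC)$ exist whenever $\catC$ is finitely complete, computed levelwise on objects of objects and objects of morphisms. The substantive point is the existence of pullback-stable (regular epi, mono) factorisations in $\Gpd(\catC)$. Here 3-permutability is crucial: given an internal functor $f \colon G \to H$, I would show using the characterisation $R^{\oprel} \circ R \circ R^{\oprel} \circ R = R^{\oprel} \circ R$ that the kernel pair of $f_1$ in $\catC$ inherits a groupoid structure (so that effective equivalence relations on $G$ in $\Gpd(\catC)$ descend from effective equivalence relations on $G_1$ in $\catC$), and then conclude that $f$ is a regular epimorphism iff both $f_0$ and $f_1$ are regular epimorphisms in $\catC$. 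Pullback-stability in $\Gpd(\catC)$ then reduces to pullback-stability in $\catC$. This is the crux of the argument and the place where Gran's prior work on internal groupoids and 3-permutable categories can be cited.

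Given regularity, the Goursat condition for $\Gpd(\catC)$ follows quickly from the relational characterisation~\eqref{eq:difunctionalrelation}: a relation between internal groupoids is just a subgroupoid of the product, its dagger and composition in $\Rel(\Gpd(\catC))$ are computed levelwise from the analogous operations in $\Rel(\catC)$, and so the identity $R^{\oprel} \circ R \circ R^{\oprel} \circ R = R^{\oprel} \circ R$ in $\Gpd(\catC)$ is an immediate consequence of the corresponding identity on objects-of-objects and objects-of-morphisms in $\catC$.

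For the converse, consider the discrete groupoid functor $\mathrm{Dis} \colon \catC \to \Gpd(\catC)$, sending $A$ to the groupoid with $A_0 = A_1 = A$ and all structure maps identities, which is fully faithful and preserves finite limits as well as monomorphisms (all computed levelwise). Consequently, every relation $R \rightarrowtail A \times B$ in $\catC$ lifts to a relation $\mathrm{Dis}(R) \rightarrowtail \mathrm{Dis}(A) \times \mathrm{Dis}(B)$ in $\Gpd(\catC)$, with $\mathrm{Dis}(R)^{\oprel} = \mathrm{Dis}(R^{\oprel})$ and $\mathrm{Dis}(S) \circ \mathrm{Dis}(R) = \mathrm{Dis}(S \circ R)$. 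Applying the Goursat identity to $\mathrm{Dis}(R)$ and restricting back to $\catC$ yields~\eqref{eq:difunctionalrelation} for $R$. The main obstacle is therefore the proof of regularity of $\Gpd(\catC)$ in the forward direction, where 3-permutability is essential to ensure both that kernel pairs carry compatible groupoid structures and that componentwise factorisations assemble into internal ones.
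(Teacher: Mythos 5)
Your proposal follows essentially the same route as the paper: the forward direction rests on showing that (regular epimorphism, monomorphism) factorisations in $\Gpd(\catC)$ are computed levelwise as in reflexive graphs --- the point where 3-permutability genuinely enters, via the results of Gran et al.\ that you (like the paper) defer to --- and the converse uses the discrete embedding of $\catC$ into $\Gpd(\catC)$ exactly as in the paper. The only cosmetic difference is that you verify the Goursat condition for $\Gpd(\catC)$ by checking the difunctionality identity componentwise in $\Rel(\Gpd(\catC))$, whereas the paper deduces it from closure of $\Gpd(\catC)$ under pullbacks and regular quotients in the Goursat category $\RG(\catC)$ of reflexive graphs; both arguments hinge on the same levelwise computation of images.
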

\begin{proof}
If $\cat{C}$ is a Goursat category, let us first prove that $\Gpd(\cat{C})$ is a regular category.  As explained in ~\cite{GranRodeloNguefeu}, the factorisation of any functor in $\Gpd(\cat{C})$ as a regular epimorphism followed by a monomorphism is obtained in the category $\Gpd(\cat{C})$ in the same way as in the functor category $\RG(\cat{C})$ of internal reflexive graphs in $\cat{C}$, which is obviously a regular Goursat category. 
Indeed, given an internal functor $(f_0, f_1)$ in $\Gpd (\cat{C})$ from a groupoid $A$ to a groupoid $B$, depicted as
  \begin{equation}\label{functor}
   \begin{aligned}\begin{tikzpicture}[yscale=1.5]
    \node (a2) at (0,2) {$A_1 \times_{A_0} A_1$};
    \node (a1) at (0,1) {$A_1$};
    \node (a0) at (0,0) {$A_0$};
    \node (b2) at (3,2) {$B_1 \times_{B_0} B_1$};
    \node (b1) at (3,1) {$B_1$};
    \node (b0) at (3,0) {${B_0,}$};
    \draw[->, bend right = 45] ([xshift=-2mm]a2.south) to node[left]{$p_1$} ([xshift=-2mm]a1.north);
    \draw[->, bend left = 45] ([xshift=2mm]a2.south) to node[right]{$p_2$} ([xshift=2mm]a1.north);
    \draw[->] (a2.south) to node[left=-1.2mm]{$m$} (a1.north);
    \draw[->] (a0.north) to node[left=-1mm]{$e$} (a1.south);
    \draw[->] ([xshift=-2mm]a1.south) to node[left] {$s$} ([xshift=-2mm]a0.north);
    \draw[->] ([xshift=2mm]a1.south) to node[right=-1mm] {$t$} ([xshift=2mm]a0.north);
    \draw[->] ([yshift=1mm]a1.west) to[out=135,in=90] ([xshift=-3mm]a1.west) node[left]{$i$} to[out=-90,in=-135] ([yshift=-1mm]a1.west);
    \draw[->, bend right = 45] ([xshift=-2mm]b2.south) to node[left]{$p_1$} ([xshift=-2mm]b1.north);
    \draw[->, bend left = 45] ([xshift=2mm]b2.south) to node[right]{$p_2$} ([xshift=2mm]b1.north);
    \draw[->] (b2.south) to node[left=-1.2mm]{$m$} (b1.north);
    \draw[->] (b0.north) to node[left=-1mm]{$e$} (b1.south);
    \draw[->] ([xshift=-2mm]b1.south) to node[left] {$s$} ([xshift=-2mm]b0.north);
    \draw[->] ([xshift=2mm]b1.south) to node[right=-1mm] {$t$} ([xshift=2mm]b0.north);
    \draw[->] ([yshift=1mm]b1.east) to[out=45,in=90] ([xshift=3mm]b1.east) node[right]{$i$} to[out=-90,in=-45] ([yshift=-1mm]b1.east);  
    \draw[->] (a0) to node[below]{$f_0$} (b0);
    \draw[->] ([yshift=-.5mm]a1.east) to node[below]{$f_1$} ([yshift=-.5mm]b1.west);
    \draw[->] (a2) to node[above]{$f_2$} (b2);
  \end{tikzpicture}\end{aligned}
  \end{equation}
 the restriction $f_2 \colon A_1 \times_{A_0} A_1  \rightarrow B_1 \times_{B_0} B_1$ to the ``objects of composable pairs of morphisms'' is a regular epimorphism whenever $f_0$ and $f_1$ are regular epimorphisms (by Theorem $1.3$ (ii) in \cite{granrodelo:beckchevalley}, for instance). This implies that $\Gpd(\cat{C})$ is closed under regular quotients in $\RG(\cat{C})$ (Theorem $3.11$ (ii)  in \cite{GranRodeloNguefeu}), so that the regular image of the factorisation in $\RG(\cat{C})$ of  the internal functor $(f_0, f_1) \colon A \rightarrow B$ is again an internal groupoid in $\cat C$.
 That, in turn, implies that the regular epimorphism-monomorphism factorisations are pullback stable in $\Gpd (\cat{C})$, since pullbacks in $\Gpd (\cat{C})$ are computed ``componentwise'' at the levels of ``objects'', ``morphisms'' and ``composable pairs'', respectively. In other words, the regularity of $\Gpd (\cat{C})$ is inherited by the regularity of the functor category $\RG(\cat{C})$ of reflexive graphs in $\cat{C}$.
 Finally, the category $\Gpd (\cat{C})$ is a Goursat category simply because it is a full subcategory of $\RG(\cat{C})$ that is
 stable under pullbacks and regular quotients in the Goursat category $\RG(\cat{C})$. Indeed, this follows immediately from the fact that a regular category is a Goursat category if and only if the regular image of an equivalence relation is again an equivalence relation \cite{carbonikellypedicchio:goursat}.

  Conversely, assume that $\Gpd (\cat{C})$ is a regular Goursat category. Now, $\cat{C}$ can be identified, via the ``discrete'' functor, with the full replete subcategory $\Dis(\cat{C})$ of $\Gpd (\cat{C})$ whose objects are discrete equivalence relations. Then the fact that $\Dis(\cat{C})$ is closed in $\Gpd (\cat{C})$ under finite limits and subobjects easily implies that the (regular epimorphism, monomorphism) factorisation in $\Gpd (\cat{C})$ of an arrow in $\Dis(\cat{C})$ is also its (regular epimorphism, monomorphism) factorisation in $\Dis(\cat{C})$. It follows that $\cat{C} \simeq \Dis(\cat{C})$ is a Goursat category whenever $\Gpd (\cat{C})$ is a Goursat category, since the regular image of an equivalence relation in $\Dis(\cat{C})$ is then an equivalence relation. 
\end{proof}

Thanks to Lemma~\ref{lem:goursat}, it now makes sense to speak of $\Rel(\Gpd(\cat{C}))$ when $\cat{C}$ is a Goursat category. 

\begin{theorem}\label{thm:relgpdtocprel}
  For any Goursat category $\cat{C}$, there is a functor $$\Rel(\Gpd(\cat{C})) \to \CP(\Rel(\cat{C}))$$ induced by the assignment $A \mapsto A_1$. This functor is an equivalence of monoidal dagger categories.
\end{theorem}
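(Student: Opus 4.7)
The plan is to define the functor $F \colon \Rel(\Gpd(\catC)) \to \CP(\Rel(\catC))$ by sending a groupoid $A$ to its morphism object $A_1$ carrying the special dagger Frobenius structure of Theorem~\ref{thm:frobeniusstructuresaregroupoids}, and sending a subgroupoid $R \rightarrowtail A \times B$ in $\Gpd(\catC)$ to the relation $R_1 \rightarrowtail A_1 \times B_1$ extracted at the level of morphism objects.

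For functoriality, identities in $\Rel(\Gpd(\catC))$ are the discrete subgroupoids $\Delta_A \rightarrowtail A \times A$, whose morphism components are the diagonals of $A_1$, i.e.\ identity relations in $\Rel(\catC)$. Composition $S \circ R$ in $\Rel(\Gpd(\catC))$ is formed by a pullback in $\Gpd(\catC)$ followed by a regular-image factorisation; by Lemma~\ref{lem:goursat} both are formed in $\Gpd(\catC)$ in the same way as in $\RG(\catC)$, hence componentwise, so $(S \circ R)_1 = S_1 \circ R_1$ in $\Rel(\catC)$.

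To see that $R_1$ is completely positive, unfold the condition~\eqref{eq:cpcondition} in regular logic using the concrete Frobenius structure on groupoids described in the proof of Theorem~\ref{thm:frobeniusstructuresaregroupoids}. Since the groupoid operations of $A \times B$ are computed componentwise from those of $A$ and $B$, the CP equation translates into the requirement that $R_1$ be closed under componentwise composition, units, and inverses — which is exactly the condition that $R$ forms a subgroupoid of $A \times B$. The witness $g$ in Definition~\ref{def:cp} can then be taken to be the canonical restriction relation into $R_1$.

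Essential surjectivity of $F$ is immediate from Theorem~\ref{thm:frobeniusstructuresaregroupoids}. Faithfulness holds because the object component $R_0$ of a subgroupoid $R \rightarrowtail A \times B$ is recoverable from $R_1$ (as the image of the source map, or equivalently as the subobject of identities in $R_1$), so that $R$ is determined by $R_1$. The main obstacle is fullness: given a completely positive morphism $f \colon A_1 \relto B_1$, we must produce a subgroupoid $R$ of $A \times B$ with $R_1 = f$. The strategy is to unpack~\eqref{eq:cpcondition} in regular logic — following the style of the translation~\eqref{units2_monproof}--\eqref{frob_monproof} from the proof of Theorem~\ref{thm:frobeniusstructuresaregroupoids} — and extract from it precisely the closure of $f$ under componentwise composition, identities, and inversion in $A \times B$; taking $R_1 = f$ and recovering $R_0$ from the identities then yields the required subgroupoid. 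Finally, preservation of the monoidal dagger structure is routine: cartesian products and the terminal groupoid in $\Gpd(\catC)$ are computed componentwise and match the tensor product and unit of $\Rel(\catC)$, while the relational opposite $(-)^{\oprel}$ swaps coordinates on each side.
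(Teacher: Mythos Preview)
Your outline is broadly on target, but there is a genuine gap in the fullness argument: you never use the Goursat hypothesis, and it is precisely there that it is needed. Unpacking the CP condition $C(f)=g^{\dag}\circ g$ in regular logic yields only that $C(f)$ is of the form $S^{\oprel}\circ S$, hence symmetric and satisfying $C(f)(a,b)\Rightarrow C(f)(a,a)\wedge C(f)(b,a)$; via the formula $C(f)=\sem{(a,b)\mid b^{-1}\circ a\in f}$ this gives closure of $f$ under identities and inverses. It does \emph{not} give closure under composition: for that you need $C(f)\circ C(f)\leq C(f)$, i.e.\ $S^{\oprel}\circ S\circ S^{\oprel}\circ S=S^{\oprel}\circ S$, which is exactly the Goursat condition~\eqref{eq:difunctionalrelation}. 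Without it, a completely positive relation need not come from a subgroupoid, so the functor need not be full. This is the one place in the paper's proof where Goursat enters, and your proposal skips it.

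A secondary point: the paper streamlines both well-definedness and fullness by first using $\Rel(\catC)(B,A)\simeq\Rel(\catC)(I,B^{*}\otimes A)$ to reduce to the case $B=I$, then computing $C(R)$ explicitly as $\sem{(a,b)\mid b^{-1}\circ a\in R}$. This makes both directions (subgroupoid $\Leftrightarrow$ CP) short; your sketch of the forward direction (``the witness $g$ can be taken to be the canonical restriction'') is correct in spirit but would benefit from this reduction. For faithfulness your argument (that $R_0$ is recovered from $R_1$ as the subobject of identities) is fine, though the paper instead observes that the forgetful functor $\Gpd(\catC)\to\catC$, $A\mapsto A_1$, reflects isomorphisms.
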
 
\begin{proof}
  To see that the functor is well-defined and full, we prove the following: if $(A,\tinymult[whitedot])$ and $(B,\tinymult[dot])$ are groupoids in $\cat{C}$, then a relation $R \colon B \relto A$ defines a subgroupoid of $B \times A$ if and only if it is completely positive. Because $\Rel(\cat{C})(B,A) \simeq \Rel(\cat{C})(I,B^* \otimes A)$ it suffices to consider the case $B=I$. Now let $C(R)$ denote the left-hand side of~\eqref{eq:cpcondition}, then we have 
\begin{equation} \label{eq:CR}
  C(R)= \sem{(a,b) \in A \mid b^{-1} \circ a \in R}
\end{equation}
  First suppose that $R \colon I \to A$ is completely positive, so that $C(R)$ is of the form $S^\oprel \circ S = \sem{(a, c) \in A\times A \mid (\exists b \in C)\, S(a,b) \wedge S(c,b) }$ for some relation $S \colon A \relto C$ in $\cat{C}$. This ensures that $C(R)$ satisfies $$C(R)(a,b) \Rightarrow C(R)(a,a) \wedge C(R)(b,a)$$ from which it follows that $R$ is closed under identities and inverses:
 \[
    R(a) \Rightarrow R(a^{-1}) \wedge R(\id[\dom(a)])\text{.}
 \]
  Since $\cat{C}$ is a Goursat category, also $C(R) \circ C(R) = S^\oprel\circ S\circ S^\oprel\circ S = S^\oprel \circ S = C(R)$, which implies that $R$ is closed under composition, and hence a subgroupoid of $A$. 
  Conversely, if $R$ is a subgroupoid, then from~\eqref{eq:CR} it is easy to check that $C(R)=C(R)^\oprel \circ C(R)$, making $R$ completely positive.   

  The functor is surjective on objects by Theorem~\ref{thm:frobeniusstructuresaregroupoids}.
  To show that the functor is faithful, we need that two subobjects in $\Gpd(\catC)$ are isomorphic there if and only if they are isomorphic in $\catC$. For this it suffices to show that the forgetful functor $\Gpd(\cat{C}) \to \cat{C}$, given by $A \mapsto A_1$, reflects isomorphisms. If $f=(f_0, f_1) \colon A \to B$ is an internal functor in $\Gpd(\cat{C})$ as in \eqref{functor} and  $f_1$ is an isomorphism in $\cat{C}$,
     then both $f_0$ and $f_2$ are isomorphisms. It follows that $f$ an isomorphism in $\Gpd(\cat{C})$.
\end{proof}

\begin{remark}\label{rem:categorification}
  In particular, since we've seen that $\cat{CrMod} \simeq \Gpd(\cat{Gp})$, it follows from the previous theorem that there is an equivalence between the categories  $\CP(\Rel(\cat{Gp}))$ and $\Rel(\cat{CrMod})$.
  Similarly, the category $\CP(\Rel(\cat{Vect}_k))$ is equivalent to the category of relations in 2-vector spaces considered in \cite{baezcrans:lie}. 
  In a Goursat category $\cat{C}$, the forgetful functor $\Gpd(\cat{C}) \to \Cat(\cat{C})$ is an isomorphism, that is, every category in $\cat{C}$ uniquely defines an internal groupoid~\cite{martinsrodelovanderlinden:permutability}.
  Thus the $\CP$ construction is related to the inductive process defining $n$-fold categories.
  More precisely, for a Goursat category $\cat{C}$, define $\Cat^n(\cat{C})$ by $\Cat^0(\cat{C})=\cat{C}$ and $\Cat^{n+1}(\cat{C}) = \Cat(\Cat^n(\cat{C}))$. 
  Then $$\CP(\Rel(\Cat^n(\cat{C}))) \cong \Cat^{n+1}(\cat{C})$$ for all $n \geq 0$.
\end{remark}

\section{Frobenius 3-structures}\label{sec:frobenius3}

In this section we develop a ternary analogue of Frobenius structures. From now on we will call (binary) Frobenius structures \emph{Frobenius 2-structures} or simply \emph{2-structures}.

\begin{definition} \label{def:3-struc}
  A \emph{Frobenius 3-structure} in a monoidal category consists of (two-sided) dual objects $A$ and $A^*$, together with a morphism $\tinymulttriple \colon A \otimes A^* \otimes A \to A$ satisfying \emph{associativity} and \emph{symmetry}:
  \begin{align} 
    \begin{pic}[scale=.4]
        \node[dot] (t) at (0,1) {};
        \node[dot] (b) at (1.5,0) {};
        \draw[arrow=.7] (t) to +(0,1);
        \draw[arrow=.85] (t) to +(0,-2);
        \draw[reverse arrow=.7] (t) to[out=0,in=110] (b);
        \draw[reverse arrow=.9] (t) to[out=180,in=90] (-1.5,-1);
        \draw[reverse arrow=.8] (b) to[out=180,in=90] (0.5,-1);
        \draw[reverse arrow=.8] (b) to[out=0,in=90] (2.5,-1);
        \draw[arrow=.7] (b) to +(0,-1);
    \end{pic}
    & =
    \begin{pic}[yscale=.4,xscale=-.4]
        \node[dot] (t) at (0,1) {};
        \node[dot] (b) at (1.5,0) {};
        \draw[arrow=.7] (t) to +(0,1);
        \draw[arrow=.85] (t) to +(0,-2);
        \draw[reverse arrow=.7] (t) to[out=0,in=110] (b);
        \draw[reverse arrow=.9] (t) to[out=180,in=90] (-1.5,-1);
        \draw[reverse arrow=.8] (b) to[out=180,in=90] (0.5,-1);
        \draw[reverse arrow=.8] (b) to[out=0,in=90] (2.5,-1);
        \draw[arrow=.7] (b) to +(0,-1);
    \end{pic}    
    \label{eq:associativity:ternary} \\
    \begin{pic}[scale=.4]
      \node[dot] (d) at (0,0) {};
      \draw[arrow=.7] (d.north) to +(0,1);
      \draw[reverse arrow=.9] (d) to[out=180,in=90] (-.75,-1.5);
      \draw[arrow=.92] (d) to[out=-90,in=-90,looseness=1.5] ([xshift=2cm]d.south) to ([xshift=2cm,yshift=1cm]d.north);
      \draw[reverse arrow=.9] (d) to[out=0,in=90] ([xshift=5mm]d.south) to[out=-90,in=-90,looseness=1.5] ([xshift=15mm]d.south) to ([xshift=15mm,yshift=1cm]d.north);
    \end{pic}
    & = 
    \begin{pic}[yscale=.4,xscale=-.4]
      \node[dot] (d) at (0,0) {};
      \draw[arrow=.7] (d.north) to +(0,1);
      \draw[reverse arrow=.9] (d.east) to[out=180,in=90] (-.75,-1.5);
      \draw[arrow=.92] (d) to[out=-90,in=-90,looseness=1.5] ([xshift=2cm]d.south) to ([xshift=2cm,yshift=1cm]d.north);
      \draw[reverse arrow=.9] (d) to[out=0,in=90] ([xshift=5mm]d.south) to[out=-90,in=-90,looseness=1.5] ([xshift=15mm]d.south) to ([xshift=15mm,yshift=1cm]d.north);
    \end{pic}
    \label{eq:symmetry:ternary}
  \end{align}
 We call $\tinymulttriple$ the \emph{multiplication} of the 3-structure. Its \emph{comultiplication} $\tinymulttripleflip \colon A \to A \otimes A^* \otimes A$ is the map~\eqref{eq:symmetry:ternary}. In a monoidal dagger category, a \emph{dagger Frobenius 3-structure} is a Frobenius 3-structure for which $A$ and $A^*$ are dagger dual and which satisfies \emph{dagger symmetry}, meaning that~\eqref{eq:symmetry:ternary} holds with $\tinymulttripleflip=\tinymulttriple^\dag$.

  A Frobenius 3-structure is \emph{normal} when its \emph{left loop} and \emph{right loop} 
  \begin{align}\label{eq:loops}
    \begin{pic}[scale=.5]
      \node[dot] (d) at (0,0) {};
      \draw[arrow=.9] (d) to +(0,1);
      \draw[reverse arrow=.9] (d) to +(0,-1);
      \draw[arrow=.5] (d) to[out=-135,in=-90,looseness=2] ([xshift=-5mm]d) to[out=90,in=135,looseness=2] (d);
    \end{pic}
    =
    \begin{pic}[scale=.5]
      \node[dot] (d) at (0,0) {};
      \draw[arrow=.9] (d) to +(0,1);
      \draw[reverse arrow=.9] (d) to[out=0,in=90] +(.75,-1);
      \draw[arrow=.75] (d) to[out=-90,in=0] +(-.5,-.75) to[out=180,in=180,looseness=1.5] (d);
    \end{pic}
    \qquad\qquad
    \begin{pic}[scale=.5]
      \node[dot] (d) at (0,0) {};
      \draw[arrow=.9] (d) to +(0,1);
      \draw[reverse arrow=.9] (d) to +(0,-1);
      \draw[arrow=.5] (d) to[out=-45,in=-90,looseness=2] ([xshift=5mm]d) to[out=90,in=45,looseness=2] (d);
    \end{pic}
    =
    \begin{pic}[yscale=.5,xscale=-.5]
      \node[dot] (d) at (0,0) {};
      \draw[arrow=.9] (d) to +(0,1);
      \draw[reverse arrow=.9] (d) to[out=0,in=90] +(.75,-1);
      \draw[arrow=.75] (d) to[out=-90,in=0] +(-.5,-.75) to[out=180,in=180,looseness=1.5] (d.east);
    \end{pic}
  \end{align}
  are both identities. Note that both loops commute.
  Finally, we will call a morphism $\tinymulttriple$ \emph{left idempotent}, or \emph{right idempotent}, when its canonical endomorphism $l_A = \tinylidem$ on $A^* \otimes A$, or $r_A = \tinyridem$ on $A \otimes A^*$, is idempotent, respectively. 
\end{definition}

At times we will call Frobenius 3-structures simply \emph{3-structures}.

\begin{lemma}\label{lem:coassociative:ternary}
  Frobenius 3-structures satisfy:
  \begin{align} 
    \begin{pic}[xscale=.4,yscale=-.4]
        \node[dot] (t) at (0,1) {};
        \node[dot] (b) at (1.5,0) {};
        \draw[reverse arrow=.7] (t) to +(0,1);
        \draw[reverse arrow=.85] (t) to +(0,-2);
        \draw[arrow=.7] (t) to[out=0,in=110] (b.south);
        \draw[arrow=.9] (t) to[out=180,in=90] (-1.5,-1);
        \draw[arrow=.8] (b) to[out=180,in=90] (0.5,-1);
        \draw[arrow=.8] (b) to[out=0,in=90] (2.5,-1);
        \draw[reverse arrow=.7] (b) to +(0,-1);
    \end{pic}
    & \;\,=
    \begin{pic}[scale=-.4]
        \node[dot] (t) at (0,1) {};
        \node[dot] (b) at (1.5,0) {};
        \draw[reverse arrow=.7] (t) to +(0,1);
        \draw[reverse arrow=.85] (t) to +(0,-2);
        \draw[arrow=.7] (t) to[out=0,in=110] (b.south);
        \draw[arrow=.9] (t) to[out=180,in=90] (-1.5,-1);
        \draw[arrow=.8] (b) to[out=180,in=90] (0.5,-1);
        \draw[arrow=.8] (b) to[out=0,in=90] (2.5,-1);
        \draw[reverse arrow=.7] (b) to +(0,-1);
    \end{pic}
    \label{eq:coassociativity:ternary}  \\
    \begin{pic}[scale=.6]
      \draw[arrow=.2] (0,0) to (0,1) to[out=90,in=180] (.5,1.5) to (.5,2);
      \draw[reverse arrow=.4] (.5,1.5) to[out=0,in=90] (1,1) to[out=-90,in=180] (1.5,.5) to (1.5,0);
      \draw[arrow=.9] (1.5,.5) to[out=0,in=-90] (2,1) to (2,2);
      \draw[arrow=.8] (.5,1.5) to (.5,0);
      \draw[reverse arrow=.85] (1.5,.5) to (1.5,2);
      \node[dot] at (.5,1.5) {};
      \node[dot] at (1.5,.5) {};
    \end{pic}
    \;=\; &
    \begin{pic}[scale=.6]
       \node[dot] (b) at (0,0.5) {};
       \node[dot] (t) at (0, 1.5) {};
       \draw[reverse arrow=.9] (b) to[out=0,in=90] (0.5,0);
       \draw[arrow=.25] (-0.5,0) to[out=90,in=180] (b);
       \draw[arrow=.9] (t) to[out=0,in=-90] (0.5,2);
       \draw[arrow=.9] (t) to[out=180,in=-90] (-0.5,2);
       \draw[arrow=.6] (b.north) to (t.south);
       \draw[arrow=.8](b.south) to (0,0);
       \draw[reverse arrow=.7](t.north) to (0,2);
    \end{pic}
    \;=\;
    \begin{pic}[yscale=.6,xscale=-.6]
      \draw[arrow=.2] (0,0) to (0,1) to[out=90,in=180] (.5,1.5) to (.5,2);
      \draw[reverse arrow=.4] (.5,1.5) to[out=0,in=90] (1,1) to[out=-90,in=180] (1.5,.5) to (1.5,0);
      \draw[arrow=.9] (1.5,.5) to[out=0,in=-90] (2,1) to (2,2);
      \draw[arrow=.8] (.5,1.5) to (.5,0);
      \draw[reverse arrow=.85] (1.5,.5) to (1.5,2);
      \node[dot] at (.5,1.5) {};
      \node[dot] at (1.5,.5) {};
    \end{pic}
    \label{eq:frobenius:ternary}
  \end{align}   
\end{lemma}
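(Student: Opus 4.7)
The plan is to derive both equations from associativity \eqref{eq:associativity:ternary}, symmetry \eqref{eq:symmetry:ternary}, and the compact-closed structure between $A$ and $A^*$. The key observation I will use is that the comultiplication $\tinymulttripleflip$ is, by definition, obtained from the multiplication $\tinymulttriple$ by bending wires via the cups and caps of the duality (as exhibited by either side of the symmetry equation \eqref{eq:symmetry:ternary}). So equations about $\tinymulttriple$ can be transported to equations about $\tinymulttripleflip$ using the snake identities of the duality.

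For the coassociativity equation \eqref{eq:coassociativity:ternary}, I would first rewrite each side by expressing every $\tinymulttripleflip$ in terms of $\tinymulttriple$ composed with cups and caps. After snake identities remove any redundant loops, both sides become diagrams involving two copies of $\tinymulttriple$ connected in exactly the two ways appearing in associativity \eqref{eq:associativity:ternary}, but with some overall wires bent outward. The result then follows by a direct application of \eqref{eq:associativity:ternary}. More slickly, one can view \eqref{eq:coassociativity:ternary} as the compact-closed dual of \eqref{eq:associativity:ternary}: applying caps to all outputs and cups to all inputs of each side of \eqref{eq:associativity:ternary} produces \eqref{eq:coassociativity:ternary}, and this transformation preserves equalities.

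For the ternary Frobenius law \eqref{eq:frobenius:ternary}, I would expand every comultiplication in the three expressions using the definition via the symmetry equation. The leftmost and rightmost expressions then each become a composition of two copies of $\tinymulttriple$ with certain wires bent, and the same is true of the middle expression $\tinymulttripleflip \circ \tinymulttriple$. Associativity \eqref{eq:associativity:ternary}, applied after straightening the relevant loops with snake identities, identifies the left expression with the middle one; a second application of associativity, combined with symmetry \eqref{eq:symmetry:ternary} to convert between the two equivalent forms of the bent multiplication, identifies the middle expression with the right.

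The main obstacle will be keeping careful track of wire orientations through chained applications of snake identities, and ensuring that the correct variant of associativity is used at each step. The symmetry axiom is indispensable here: it is what licenses freely switching between the two descriptions of $\tinymulttripleflip$ as a bent $\tinymulttriple$, without which the leftmost and rightmost expressions of \eqref{eq:frobenius:ternary} would not be reconcilable. I expect the proof itself to be essentially routine graphical bookkeeping once this strategy is set up correctly.
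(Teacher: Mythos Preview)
Your proposal is correct and follows essentially the same approach as the paper: for coassociativity, the paper rewrites each $\tinymulttripleflip$ as a bent $\tinymulttriple$ via symmetry~\eqref{eq:symmetry:ternary}, applies associativity~\eqref{eq:associativity:ternary}, and converts back; for the Frobenius law, it likewise expands the comultiplication using symmetry, applies associativity, and uses symmetry once more to reach the other side, treating the two equalities in~\eqref{eq:frobenius:ternary} symmetrically. Your ``compact-closed dual'' description of coassociativity is a clean way to phrase what the paper's diagram chase accomplishes.
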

\begin{proof}
  For coassociativity~\eqref{eq:coassociativity:ternary}:
  \[
    \begin{pic}[xscale=.3,yscale=-.35]
        \node[dot] (t) at (0,1) {};
        \node[dot] (b) at (1.5,0) {};
        \draw[reverse arrow=.7] (t) to +(0,1);
        \draw[reverse arrow=.85] (t) to +(0,-2);
        \draw[arrow=.7] (t) to[out=0,in=110] (b.south);
        \draw[arrow=.9] (t) to[out=180,in=90] (-1.5,-1);
        \draw[arrow=.8] (b) to[out=180,in=90] (0.5,-1);
        \draw[arrow=.8] (b) to[out=0,in=90] (2.5,-1);
        \draw[reverse arrow=.7] (b) to +(0,-1);
    \end{pic}
    =
    \begin{pic}[scale=.4]
      \node[dot] (l) at (0,0) {};
      \node[dot] (r) at (1,.5) {};
      \draw[arrow=.9] (r) to +(0,.7);
      \draw[arrow=.7] (l) to[out=90,in=180,looseness=.7] (r);
      \draw[reverse arrow=.95] (r) to[out=0,in=90] +(.5,-.3) to[out=-90,in=-90,looseness=2] +(.5,0) to +(0,1);
      \draw[arrow=.97] (r) to[out=-90,in=-90,looseness=2] +(1.5,-.2) to +(0,.9);
      \draw[reverse arrow=.9] (l) to[out=0,in=90,looseness=.8] +(.7,-1.5);
      \draw[reverse arrow=.95] (l) to[out=180,in=90] +(-.5,-.3) to[out=-90,in=-90,looseness=2] +(-.5,0) to +(0,1.5);
      \draw[arrow=.97] (l) to[out=-90,in=-90,looseness=2] +(-1.5,-.2) to +(0,1.4);
    \end{pic}
    =
    \begin{pic}[xscale=-.4,yscale=.4]
      \node[dot] (l) at (0,0) {};
      \node[dot] (r) at (1,.5) {};
      \draw[arrow=.9] (r) to +(0,.7);
      \draw[arrow=.7] (l) to[out=90,in=180,looseness=.7] (r.east);
      \draw[reverse arrow=.95] (r) to[out=0,in=90] +(.5,-.3) to[out=-90,in=-90,looseness=2] +(.5,0) to +(0,1);
      \draw[arrow=.97] (r) to[out=-90,in=-90,looseness=2] +(1.5,-.2) to +(0,.9);
      \draw[reverse arrow=.9] (l.west) to[out=0,in=90] +(.6,-1.5);
      \draw[reverse arrow=.95] (l) to[out=180,in=90] +(-.5,-.3) to[out=-90,in=-90,looseness=2] +(-.5,0) to +(0,1.5);
      \draw[arrow=.97] (l) to[out=-90,in=-90,looseness=2] +(-1.5,-.2) to +(0,1.4);
    \end{pic}
    =
    \begin{pic}[scale=-.4]
        \node[dot] (t) at (0,1) {};
        \node[dot] (b) at (1.5,0) {};
        \draw[reverse arrow=.7] (t) to +(0,1);
        \draw[reverse arrow=.85] (t) to +(0,-2);
        \draw[arrow=.7] (t) to[out=0,in=110] (b.south);
        \draw[arrow=.9] (t) to[out=180,in=90] (-1.5,-1);
        \draw[arrow=.8] (b) to[out=180,in=90] (0.5,-1);
        \draw[arrow=.8] (b) to[out=0,in=90] (2.5,-1);
        \draw[reverse arrow=.7] (b) to +(0,-1);
    \end{pic}
  \]
  We verify the second equation in the Frobenius law~\eqref{eq:frobenius:ternary}: \\
  \[
    \begin{pic}[scale=.6]
       \node[dot] (b) at (0,0.5) {};
       \node[dot] (t) at (0, 1.5) {};
       \draw[reverse arrow=.9] (b) to[out=0,in=90] (0.5,0);
       \draw[arrow=.25] (-0.5,0) to[out=90,in=180] (b);
       \draw[arrow=.9] (t) to[out=0,in=-90] (0.5,2);
       \draw[arrow=.9] (t) to[out=180,in=-90] (-0.5,2);
       \draw[arrow=.6] (b.north) to (t.south);
       \draw[arrow=.8](b.south) to (0,0);
       \draw[reverse arrow=.7](t.north) to (0,2);
    \end{pic}
    =
    \begin{pic}[scale=.6]
       \node[dot] (b) at (0,0.5) {};
       \node[dot] (t) at (-.5, 1.5) {};
       \draw[reverse arrow=.9] (b) to[out=0,in=90] (0.5,0);
       \draw[arrow=.25] (-0.5,0) to[out=90,in=180] (b);
       \draw[arrow=.6] (b.north) to[out=90,in=0] (t);
       \draw[arrow=.8](b.south) to (0,0);
       \draw[arrow=.9] (t) to +(0,.5);
       \draw[reverse arrow=.95] (t) to[out=180,in=90] +(-.4,-.3) to[out=-90,in=-90,looseness=2] +(-.3,0) to +(0,.8);
       \draw[arrow=.97] (t) to[out=-90,in=-90,looseness=2] +(-1,-.2) to +(0,.7);
    \end{pic}
    =
    \begin{pic}[scale=.6]
        \node[dot] (l) at (-.5,.-.5) {};
        \node[dot] (r) at (.3,.2) {};
        \draw[arrow=.8] (r) to +(0,.5);
        \draw[arrow=.9] (r) to +(0,-1.5);
        \draw[arrow=.9] (r) to[out=0,in=90] +(.7,-1.5);
        \draw[arrow=.5] (l) to[out=90,in=180] (r);
        \draw[reverse arrow=.85] (l) to[out=0,in=90] +(.5,-.8);
        \draw[reverse arrow=.9] (l) to[out=180,in=90] +(-.3,-.2) to[out=-90,in=-90,looseness=2] +(-.4,0) to +(0,1.4);
        \draw[arrow=.94] (l) to[out=-90,in=-90,looseness=2] +(-1,-.2) to +(0,1.4);
    \end{pic}    
    =
    \begin{pic}[yscale=.6,xscale=-.6]
      \draw[arrow=.2] (0,0) to (0,1) to[out=90,in=180] (.5,1.5) to (.5,2);
      \draw[reverse arrow=.4] (.5,1.5) to[out=0,in=90] (1,1) to[out=-90,in=180] (1.5,.5) to (1.5,0);
      \draw[arrow=.9] (1.5,.5) to[out=0,in=-90] (2,1) to (2,2);
      \draw[arrow=.8] (.5,1.5) to (.5,0);
      \draw[reverse arrow=.85] (1.5,.5) to (1.5,2);
      \node[dot] at (.5,1.5) {};
      \node[dot] at (1.5,.5) {};
    \end{pic}
  \]
  The first equality in the Frobenius law~\eqref{eq:frobenius:ternary} is similar.
\end{proof}

\begin{example}[Dual Frobenius 3-structure]\label{ex:dual3}
  If $(A,\tinymulttriple)$ is a Frobenius 3-structure in a monoidal category $\cat{C}$, then any dual object $A^*$ also has a Frobenius 3-structure given by:
  \begin{equation} \label{eq:dual3} 
    \begin{pic}[scale=.5]
      \node[dot] (d) at (0,0){};
      \draw[arrow=.9] (d) to +(0,-1);
      \draw[reverse arrow=.9] (d) to[out=180,in=90] +(-.5,-1);
      \draw[reverse arrow=.9] (d) to[out=0,in=90] +(.5,-.5) to[out=-90,in=-90,looseness=1.5] +(.75,0) to +(0,2);
      \draw[arrow=.97] (d) to[out=90,in=90,looseness=1.5] +(-1,.2) to +(0,-1.2);
    \end{pic}
    =
    \begin{pic}[xscale=-.5,yscale=.5]
      \node[dot] (d) at (0,0){};
      \draw[arrow=.9] (d) to +(0,-1);
      \draw[reverse arrow=.9] (d.east) to[out=180,in=90] +(-.4,-1);
      \draw[reverse arrow=.9] (d.west) to[out=0,in=90] +(.4,-.5) to[out=-90,in=-90,looseness=1.5] +(.75,0) to +(0,2);
      \draw[arrow=.97] (d) to[out=90,in=90,looseness=1.5] +(-1,.2) to +(0,-1.2);
    \end{pic}    
  \end{equation}
\end{example}

\begin{example}[Opposite Frobenius 3-structure]\label{ex:opposite3}
  If an object in a symmetric monoidal category has a Frobenius 3-structure $\tinymulttriple$, then it also has another one given by:
  \begin{equation}\label{eq:opposite3}
    \begin{pic}[xscale=.5,yscale=.3]
      \node[dot] (d) at (0,0) {};
      \draw[arrow=.9] (d) to +(0,1);
      \draw[arrow=.9] (d) to +(0,-2);
      \draw[reverse arrow=.92] (d) to[out=0,in=90] +(.75,-.5) to[out=-90,in=90] +(-1.5,-1.5);
      \draw[reverse arrow=.92] (d) to[out=180,in=90] +(-.75,-.5) to[out=-90,in=90] +(1.5,-1.5);
    \end{pic}
  \end{equation}
\end{example}

\begin{definition}
  A Frobenius 3-structure on $A$ is \emph{commutative} when it equals its opposite, and $A$ and $A^*$ are symmetric duals.
\end{definition}

\begin{example}[Product of Frobenius 3-structures]\label{ex:product3}
  If $(A,\tinymulttriple)$ and $(B,\tinymulttriple[whitedot])$ are Frobenius 3-structures in a symmetric monoidal category, then so is $A \otimes B$, using $(A \otimes B)^* = B^* \otimes A^*$ and multiplication:
  \begin{equation} \label{eq:product3} 
    \begin{pic}[scale=.5]
      \node[whitedot] (l) at (0,0) {};
      \node[dot] (r) at (2,0) {};
      \draw[arrow=.9] (l) to +(0,1);
      \draw[arrow=.9] (r) to +(0,1);
      \draw[reverse arrow=.9] (l) to[out=180,in=90] +(-1.5,-2.5);
      \draw[reverse arrow=.92] (r) to[out=180,in=90] +(-2.5,-2.5);
      \draw[arrow=.91] (l) to[out=-90,in=90] +(1.5,-2.5);
      \draw[arrow=.91] (r) to[out=-90,in=90] +(-1.5,-2.5);
      \draw[reverse arrow=.92] (l) to[out=0,in=90] +(2.5,-2.5);
      \draw[reverse arrow=.9] (r) to[out=0,in=90] +(1.5,-2.5);
    \end{pic}
  \end{equation}
\end{example}

\noindent
If $\tinymulttriple$ and $\tinymulttriple[whitedot]$ are commutative or normal, then so are~\eqref{eq:dual3}, \eqref{eq:opposite3}, and~\eqref{eq:product3}.

\begin{example}\label{ex:triplepants}\label{ex:landr}
Any object $A$ with a dual comes with Frobenius 3-structures:
  \[
    \begin{pic}[xscale=.4,yscale=.7]
      \draw[arrow=.5] (0,0) to[out=90,in=-90] (1,1);
      \draw[reverse arrow=.53] (1,0) to[out=90,in=90] (2,0);
    \end{pic}
    \qquad\text{ and }\qquad
    \begin{pic}[xscale=-.4,yscale=.7]
      \draw[arrow=.5] (0,0) to[out=90,in=-90] (1,1);
      \draw[reverse arrow=.5] (1,0) to[out=90,in=90] (2,0);
    \end{pic}
  \]
  If objects $A$ and $B$ in a monoidal category have duals $A^*$ and $B^*$,
  then $A^* \otimes B$ has a Frobenius 3-structure: 
  \begin{equation}\label{eq:tpants}
    \begin{pic}[xscale=.3]
      \draw[reverse arrow=.1] (0,0) node[below]{$A$} to[out=90,in=-90,looseness=.8] (2,1);
      \draw[arrow=.2, arrow=.92] (1,0) node[below]{$B$} to[out=90,in=90] (2,0) node[below]{$B$};
      \draw[arrow=.2, arrow=.92] (3,0) node[below]{$A$} to[out=90,in=90] (4,0) node[below]{$A$};
      \draw[arrow=.1] (5,0) node[below]{$B$} to[out=90,in=-90,looseness=.8] (3,1);
    \end{pic}
  \end{equation}
  In a symmetric monoidal category, this decomposes as 
$
(A^*,
    \begin{pic}[xscale=.2,yscale=.35]
      \draw[reverse arrow=.53] (0,0) to[out=90,in=-90] (1,1);
      \draw[arrow=.6] (1,0) to[out=90,in=90] (2,0);
    \end{pic}
)
\otimes
(B,
    \begin{pic}[xscale=-.2,yscale=.35]
      \draw[arrow=.5] (0,0) to[out=90,in=-90] (1,1);
      \draw[reverse arrow=.5] (1,0) to[out=90,in=90] (2,0);
    \end{pic}
)
$.
\end{example}

\begin{example} \label{ex:HilbertTriple}
By definition, a dagger Frobenius 3-structure in $\Hilb$ is a finite-dimensional Hilbert space $H$ together with a ternary map $[-,-,-]$ on $H$, linear in the first and third arguments and anti-linear in the second, satisfying:
\begin{align*}
    \langle [a, b, c], d \rangle = \langle a, [d, c, b] \rangle = \langle c, [b, a, d] \rangle\text, \\
    [[a, b, c], d, e]= [a, b, [c, d, e]]\text.
  \end{align*}
Such structures are known as finite-dimensional associative \emph{Hilbert triple systems}~\cite{zalar:triplesystems}. The most well-known are the \emph{ternary rings of operators} (TROs); subspaces of $\boundedops(H)$ (or $\boundedops(H, K)$) closed under the norm and $(a,b,c) \mapsto {a \circ b^* \circ c}$. In $\Hilb$ each 3-structure~\eqref{eq:tpants} may be identified with $\boundedops(H,K)$ under this operation. In fact, every associative Hilbert triple system may be written as an orthogonal sum of ternary structures of this form up to an overall sign factor, and those with $[-,-,-]=0$~\cite{zalar:triplesystems}. 

TROs were first studied in finite dimension by Hestenes~\cite{hestenes1962ternary}, and have since been shown to essentially coincide with \emph{Hilbert C$^*$-modules}~\cite{zettl1983characterization}. For further related ternary structures in algebra and geometry, see~\cite{chu2011jordan}. 
\end{example}

We turn to Frobenius 3-structures in $\Rel(\catC)$ shortly, in Section~\ref{sec:connectors}.

\subsection*{Normal forms}

We now establish coherence for 3-structures. This requires one additional property. 

\begin{definition}
A Frobenius 3-structure is said to satisfy \emph{sliding} when:
 \[
    \begin{pic}[scale=.5]
        \node[dot] (d) at (0,0) {};
        \node[dot] (e) at (-.75,-.75) {};
        \draw[arrow=.9] (d) to +(0,1);
        \draw[arrow=.9] (d) to +(0,-1.5);
        \draw[reverse arrow=.9] (d) to[out=0,in=90] +(.75,-1.5);
        \draw[arrow=.6] (e) to[out=90,in=180] (d);
        \draw[reverse arrow=.7] (e) to +(0,-.75);
        \draw[arrow=.5] (e.-45) to[out=-45,in=-90,looseness=2] ([xshift=3mm]e.east) to[out=90,in=45,looseness=2] (e.45);
    \end{pic}
    = 
    \begin{pic}[scale=.5]
        \node[dot] (d) at (0,0) {};
        \node[dot] (e) at (0,.75) {};
        \draw[arrow=.5] (e.-45) to[out=-45,in=-90,looseness=2] ([xshift=3mm]e.east) to[out=90,in=45,looseness=2] (e.45);  
        \draw[arrow=.7] (d) to (e);
        \draw[arrow=.9] (e) to +(0,.75);
        \draw[arrow=.9] (d) to +(0,-1);
        \draw[reverse arrow=.9] (d) to[out=0,in=90] +(.7,-1);
        \draw[reverse arrow=.9] (d) to[out=180,in=90] +(-.7,-1);
    \end{pic}
    \qquad \qquad
        \begin{pic}[scale=.5]
        \node[dot] (d) at (0,0) {};
        \node[dot] (e) at (.75,-.75) {};
        \draw[arrow=.9] (d) to +(0,1);
        \draw[arrow=.9] (d) to +(0,-1.5);
        \draw[reverse arrow=.9] (d) to[out=180,in=90] +(-.75,-1.5);
        \draw[arrow=.6] (e) to[out=90,in=0] (d);
        \draw[reverse arrow=.7] (e) to +(0,-.75);
        \draw[arrow=.5] (e.-135) to[out=-135,in=-90,looseness=2] ([xshift=-3mm]e.west) to[out=90,in=135,looseness=2] (e.135);  
    \end{pic}
    = 
    \begin{pic}[scale=.5]
        \node[dot] (d) at (0,0) {};
        \node[dot] (e) at (0,.75) {};
        \draw[arrow=.5] (e.-135) to[out=-135,in=-90,looseness=2] ([xshift=-3mm]e.west) to[out=90,in=135,looseness=2] (e.135);  
        \draw[arrow=.7] (d) to (e);
        \draw[arrow=.9] (e) to +(0,.75);
        \draw[arrow=.9] (d) to +(0,-1);
        \draw[reverse arrow=.9] (d) to[out=0,in=90] +(.7,-1);
        \draw[reverse arrow=.9] (d) to[out=180,in=90] +(-.7,-1);
    \end{pic}
  \]
  \end{definition}

By a \emph{Frobenius 3-structure diagram} we mean a finite connected diagram built from the pieces $\tinymulttriple, \tinymulttripleflip, \tinycup, \tinycupswap, \tinycap, \tinycapswap$ using identities, composition, and tensor products. 
After bending at most one input and/or output, any such diagram has input and output\footnote{When equating diagrams featuring ellipsis (``$\cdots$''), each series of inputs or outputs marked (with ``$\cdots$'') may be instantiated with any number of inputs or outputs respectively, so long as both diagrams have the same type.} of the form $A \otimes A^* \otimes \cdots \otimes A$, and we say such a diagram is in \emph{normal form} when for some natural numbers $m$ and $n$ it is equal to 
\[
  \begin{pic}[scale=.75]
    \node[bigdot] (d) at (0,0) {$\stackrel{m}{n}$};
    \draw[reverse arrow=.9] (d) to[out=-160,in=90] +(-1,-1);
    \draw[arrow=.9] (d) to[out=-140,in=90] +(-.66,-1);
    \draw[reverse arrow=.9] (d) to[out=-20,in=90] +(1,-1);
    \node at (.3,-.7) {$\cdots$};
    \draw[arrow=.9] (d) to[out=160,in=-90] +(-1,1);
    \draw[reverse arrow=.9] (d) to[out=140,in=-90] +(-.75,1);
    \draw[arrow=.9] (d) to[out=20,in=-90] +(1,1);
    \node at (.3,.7) {$\cdots$};
  \end{pic}
  =
  \begin{pic}[scale=.5]
    \node[dot] (a) at (0,0) {};
    \node[dot] (b) at (0,.7) {};
    \node[dot] (c) at (0,1.4) {};
    \node[dot] (d) at (0,2.1) {};
    \draw[arrow=.7] (a) to (b);
    \draw[arrow=.7] (b) to (c);
    \draw[arrow=.7] (c) to (d);
    \draw[reverse arrow=.9] (a) to[out=-160,in=90] +(-1,-1);
    \draw[arrow=.9] (a) to[out=-140,in=90] +(-.66,-1);
    \draw[reverse arrow=.9] (a) to[out=-20,in=90] +(1,-1);
    \node at (.3,-.7) {$\cdots$};
    \draw[arrow=.9] (d) to[out=160,in=-90] +(-1,1);
    \draw[reverse arrow=.9] (d) to[out=140,in=-90] +(-.66,1);
    \draw[arrow=.9] (d) to[out=20,in=-90] +(1,1);
    \node at (.3,2.7) {$\cdots$};
    \draw[arrow=.6] (b.-140) to[out=-140,in=-90,looseness=2] ([xshift=-3mm]b.west) to[out=90,in=140,looseness=2] (b.140);
    \draw[arrow=.6] (c.-40) to[out=-40,in=-90,looseness=2] ([xshift=3mm]c.east) to[out=90,in=40,looseness=2] (c.40);  
    \draw [gray,decorate,decoration={brace,amplitude=1pt}] ([xshift=-7mm,yshift=-2mm]b.south) -- ([xshift=-7mm,yshift=2mm]b.north) node [black,midway,xshift=-5.5mm] {{$m$ times}};    
    \draw [gray,decorate,decoration={brace,amplitude=1pt,mirror}] ([xshift=7mm,yshift=-2mm]c.south) -- ([xshift=7mm,yshift=2mm]c.north) node [black,midway,xshift=5mm] {{$n$ times}};  
  \end{pic}
  \qquad \text{where} \qquad
  \begin{pic}[scale=.75]
    \node[dot] (d) at (0,0) {};
    \draw[reverse arrow=.9] (d) to[out=-160,in=90] +(-1,-1);
    \draw[arrow=.9] (d) to[out=-140,in=90] +(-.66,-1);
    \draw[reverse arrow=.9] (d) to[out=-20,in=90] +(1,-1);
    \node at (.3,-.7) {$\cdots$};
    \draw[arrow=.9] (d) to +(0,1);
  \end{pic}
  =
  \begin{pic}[scale=.75]
    \node[dot] (a) at (0,0) {};
    \node[dot] (b) at (.5,.5) {};
    \node[dot] (c) at (1.5,1) {};
    \draw[arrow=.5] (a) to[out=90,in=180] (b);
    \draw[reverse arrow=.9] (a) to[out=180,in=90,looseness=.8] +(-.35,-.6);
    \draw[arrow=.9] (a) to +(0,-.6);
    \draw[reverse arrow=.9] (a) to[out=0,in=90,looseness=.8] +(.3,-.6);
    \draw[arrow=.9] (b) to +(0,-1.1);
    \draw[reverse arrow=.9] (b) to[out=0,in=90,looseness=.6] +(.35,-1.1);
    \draw[arrow=.9] (c) to +(0,.4);
    \draw[arrow=.9] (c) to +(0,-1.6);
    \draw[reverse arrow=.9] (c) to[out=0,in=90,looseness=.5] +(.4,-1.6);
    \draw (b) to[out=90,in=180] node[circle,fill=white,pos=.6]{} (c);
    \node at (1.2,-.3) {$\cdots$};
  \end{pic}
\]
and $\begin{pic}[scale=.4]
    \node(c) at (0,1) {};
    \node[dot] (d) at (0,2.1) {};
    \draw[arrow=.7] (c) to (d);
    \draw[arrow=.9] (d) to[out=160,in=-90] +(-1,1);
    \draw[reverse arrow=.9] (d) to[out=140,in=-90] +(-.66,1);
    \draw[arrow=.9] (d) to[out=20,in=-90] +(1,1);
    \node at (.3,2.7) {$\cdots$};
  \end{pic}
$ is defined in terms of $\tinymulttriple$ similarly.

For a commutative Frobenius 3-structure, the left and right loops are equal, and so we write $n$ in place of $n, 0$ inside a normal form. This also implies that the structure satisfies sliding. A \emph{commutative} Frobenius 3-structure diagram is one that may additionally include any swap morphism $\tinyswap$ (it suffices to allow the swap on each of $A \otimes A$, $A \otimes A^*$, $A^* \otimes A$ and $A^* \otimes A^*$). Two such diagrams are \emph{equivalent} when they can be obtained from one another by bending (any number of) inputs and outputs and applying symmetry maps.

\begin{lemma}\label{lem:spiderrules} 
  For any Frobenius 3-structure $\tinymulttriple$ satisfying sliding we have:
  \begin{equation} \label{tern:spider_bend}
  \begin{pic}[scale=.75]
    \node[bigdot] (d) at (0,0) {$\stackrel{m}{n}$};
    \draw[reverse arrow=.9] (d) to[out=-160,in=90] +(-1,-1);
    \draw[arrow=.9] (d) to[out=-140,in=90] +(-.66,-1);
    \draw[reverse arrow=.9] (d) to[out=-20,in=90] +(1,-1);
    \node at (.3,-.7) {$\cdots$};
    \draw[arrow=.9] (d) to[out=160,in=-90] +(-1,1);
    \draw[reverse arrow=.9] (d) to[out=140,in=-90] +(-.75,1);
    \node at (.3,.7) {$\cdots$};
    \draw[arrow=.3,arrow=.97] (d) to[out=20,in=-90] +(1,.7) to[out=90,in=90,looseness=2] +(.5,0) to +(0,-1.7);
  \end{pic}
  =
  \begin{pic}[scale=.75]
    \node[bigdot] (d) at (0,0) {$\stackrel{m}{n}$};
    \draw[reverse arrow=.9] (d) to[out=-160,in=90] +(-1,-1);
    \draw[arrow=.9] (d) to[out=-140,in=90] +(-.66,-1);
    \node at (.3,-.7) {$\cdots$};
    \draw[arrow=.9] (d) to[out=160,in=-90] +(-1,1);
    \draw[reverse arrow=.9] (d) to[out=140,in=-90] +(-.75,1);
    \node at (.3,.7) {$\cdots$};
    \draw[arrow=.9] (d) to[out=20,in=-90] +(1,1);
    \draw[reverse arrow=.3, reverse arrow=.95] (d) to[out=-20,in=90] +(1,-.6) to[out=-90,in=-90,looseness=2] +(.5,0) to +(0,1.6);
  \end{pic}
  \end{equation}
  \begin{equation} \label{tern:spider_cap}
  \begin{pic}[scale=.75]
    \node[dot] (d) at (0,0) {$\stackrel{m}{n}$};
    \draw[reverse arrow=.9] (d) to[out=-160,in=90] +(-1,-1);
    \draw[arrow=.9] (d) to[out=-140,in=90] +(-.66,-1);
    \draw[reverse arrow=.9] (d) to[out=-20,in=90] +(1,-1);
    \node at (.3,-.7) {$\cdots$};
    \draw[arrow=.9] (d) to[out=160,in=-90] +(-1,1);
    \draw[reverse arrow=.9] (d) to[out=140,in=-90] +(-.75,1);
    \draw[arrow=.9] (d) to[out=20,in=-90] +(1,1);
    \draw[arrow=.35,arrow=.72] (d) to[out=100,in=180] +(0,1) to[out=0,in=80] (d.80);
    \node at (-.4,.7) {$\cdots$};
    \node at (.5,.7) {$\cdots$};
  \end{pic}
  =
  \begin{pic}[scale=.75]
    \node[bigdot] (d) at (0,0) {$\stackrel{m\!+\!1}{n}$};
    \draw[reverse arrow=.9] (d) to[out=-160,in=90] +(-1,-1);
    \draw[arrow=.9] (d) to[out=-140,in=90] +(-.66,-1);
    \draw[reverse arrow=.9] (d) to[out=-20,in=90] +(1,-1);
    \node at (.3,-.7) {$\cdots$};
    \draw[arrow=.9] (d) to[out=160,in=-90] +(-1,1);
    \draw[reverse arrow=.9] (d) to[out=140,in=-90] +(-.75,1);
    \draw[arrow=.9] (d) to[out=20,in=-90] +(1,1);
    \node at (.3,.7) {$\cdots$};
  \end{pic}
  \qquad\qquad
  \begin{pic}[scale=.75]
    \node[dot] (d) at (0,0) {$\stackrel{m}{n}$};
    \draw[reverse arrow=.9] (d) to[out=-160,in=90] +(-1,-1);
    \draw[arrow=.9] (d) to[out=-140,in=90] +(-.66,-1);
    \draw[reverse arrow=.9] (d) to[out=-20,in=90] +(1,-1);
    \node at (.3,-.7) {$\cdots$};
    \draw[arrow=.9] (d) to[out=160,in=-90] +(-1,1);
    \draw[reverse arrow=.9] (d) to[out=140,in=-90] +(-.75,1);
    \draw[arrow=.9] (d) to[out=20,in=-90] +(1,1);
    \draw[reverse arrow=.33,reverse arrow=.7] (d) to[out=100,in=180] +(0,1) to[out=0,in=80] (d.80);
    \node at (-.4,.7) {$\cdots$};
    \node at (.5,.7) {$\cdots$};
  \end{pic}
  =
  \begin{pic}[scale=.75]
    \node[bigdot] (d) at (0,0) {$\stackrel{m}{n\!\!+\!\!1}$};
    \draw[reverse arrow=.9] (d) to[out=-160,in=90] +(-1,-1);
    \draw[arrow=.9] (d) to[out=-140,in=90] +(-.66,-1);
    \draw[reverse arrow=.9] (d) to[out=-20,in=90] +(1,-1);
    \node at (.3,-.7) {$\cdots$};
    \draw[arrow=.9] (d) to[out=160,in=-90] +(-1,1);
    \draw[reverse arrow=.9] (d) to[out=140,in=-90] +(-.75,1);
    \draw[arrow=.9] (d) to[out=20,in=-90] +(1,1);
    \node at (.3,.7) {$\cdots$};
  \end{pic}
  \end{equation}
  \begin{equation}\label{tern:spider_compose}
  \begin{pic}[scale=.75]
    \node[bigdot] (d) at (0,0) {$\stackrel{m}{n}$};
    \node[bigdot] (e) at (.5,-.5) {$\stackrel{k}{l}$};
    \draw[arrow=.7] (e) to[out=110,in=-20] (d);
    \draw[reverse arrow=.9] (d) to[out=140,in=-90] +(-.5,.8);
    \draw[arrow=.9] (d) to[out=160,in=-90] +(-.7,.8);
    \draw[arrow=.9] (d) to[out=20,in=-90] +(.7,.8);
    \node at (.1,.5) {$\cdots$};

    \draw[reverse arrow=.9] (e) to[out=-160,in=90] +(-.7,-.7);
    \draw[arrow=.9] (e) to[out=-140,in=90] +(-.5,-.7);
    \draw[reverse arrow=.9] (e) to[out=-20,in=90] +(.7,-.7);
    \node at (.6,-1) {$\cdots$};

    \draw[reverse arrow=.9] (d) to[out=-160,in=90] +(-1,-1.2);
    \draw[arrow=.9] (d) to[out=-150,in=90] +(-.8,-1.2);
    \node at (-.4,-1) {$\cdots$};

    \draw[arrow=.95] (e) to[out=20,in=-90] +(1,1.3);
    \node at (1.1,.5) {$\cdots$};
  \end{pic}
  =
  \begin{pic}[scale=.75]
    \node[bigdot] (d) at (0,0) {$\stackrel{m\!+\!n}{k\!\!+\!\!l}$};
    \draw[reverse arrow=.9] (d) to[out=-160,in=90] +(-1,-1);
    \draw[arrow=.9] (d) to[out=-140,in=90] +(-.66,-1);
    \draw[reverse arrow=.9] (d) to[out=-20,in=90] +(1,-1);
    \node at (.3,-.7) {$\cdots$};
    \draw[arrow=.9] (d) to[out=160,in=-90] +(-1,1);
    \draw[reverse arrow=.9] (d) to[out=140,in=-90] +(-.75,1);
    \draw[arrow=.9] (d) to[out=20,in=-90] +(1,1);
    \node at (.3,.7) {$\cdots$};
  \end{pic}
  \end{equation}
The horizontal and vertical reflections of these equations also hold. If $\tinymulttriple$ is commutative, then additionally:
  \[
    \begin{pic}[scale=.5]
      \node[bigdot] (d) at (0,0) {$m$};
      \draw[reverse arrow=.9] (d) to[out=-160,in=90] +(-1,-1);
      \draw[reverse arrow=.9] (d) to[out=-20,in=90] +(1,-1);
      \node at (0,-.7) {$\cdots$};
      \draw[arrow=.9] (d) to[out=160,in=-90] +(-1,1);
      \draw[arrow=.9] (d) to[out=20,in=-90] +(1,1);
      \node at (0,.7) {$\cdots$};
    \end{pic}
    =
    \raisebox{1mm}{$\begin{pic}[scale=.5]
      \node[bigdot] (d) at (0,0) {$m$};
      \draw[reverse arrow=.9] (d) to[out=-160,in=90] +(-1,-1);
      \draw[reverse arrow=.9] (d) to[out=-20,in=90] +(1,-1);
      \node at (0,-.7) {$\cdots$};
      \draw[arrow=.9] (d) to[out=160,in=-90] +(-1,1);
      \draw[arrow=.9] (d) to[out=20,in=-90] +(1,1);
      \draw[arrow=.42] (d) to[out=120,in=-90] +(-.4,1) to[out=90,in=-90] +(.8,.5);
      \draw[arrow=.42] (d) to[out=60,in=-90] +(.4,1) to[out=90,in=-90] +(-.8,.5);
      \node at (.05,.7) {$\cdots$};
      \node at (.6,.7) {$\cdot$};\node at (.8,.7) {$\cdot$};
      \node at (-.6,.7) {$\cdot$};\node at (-.8,.7) {$\cdot$};
    \end{pic}$}
    =
    \raisebox{1mm}{$\begin{pic}[scale=.5]
      \node[bigdot] (d) at (0,0) {$m$};
      \draw[reverse arrow=.9] (d) to[out=-160,in=90] +(-1,-1);
      \draw[reverse arrow=.9] (d) to[out=-20,in=90] +(1,-1);
      \node at (0,-.7) {$\cdots$};
      \draw[arrow=.9] (d) to[out=160,in=-90] +(-1,1);
      \draw[arrow=.9] (d) to[out=20,in=-90] +(1,1);
      \draw[reverse arrow=.39] (d) to[out=120,in=-90] +(-.4,1) to[out=90,in=-90] +(.8,.5);
      \draw[reverse arrow=.39] (d) to[out=60,in=-90] +(.4,1) to[out=90,in=-90] +(-.8,.5);
      \node at (.05,.7) {$\cdots$};
      \node at (.6,.7) {$\cdot$};\node at (.8,.7) {$\cdot$};
      \node at (-.6,.7) {$\cdot$};\node at (-.8,.7) {$\cdot$};
    \end{pic}$}
    =
    \raisebox{-1.5mm}{$\begin{pic}[xscale=.5,yscale=-.5]
      \node[bigdot] (d) at (0,0) {$m$};
      \draw[arrow=.9] (d) to[out=-160,in=90] +(-1,-1);
      \draw[arrow=.9] (d) to[out=-20,in=90] +(1,-1);
      \node at (0,-.7) {$\cdots$};
      \draw[reverse arrow=.9] (d) to[out=160,in=-90] +(-1,1);
      \draw[reverse arrow=.9] (d) to[out=20,in=-90] +(1,1);
      \draw[reverse arrow=.42] (d) to[out=120,in=-90] +(-.4,1) to[out=90,in=-90] +(.8,.5);
      \draw[reverse arrow=.42] (d) to[out=60,in=-90] +(.4,1) to[out=90,in=-90] +(-.8,.5);
      \node at (.05,.7) {$\cdots$};
      \node at (.6,.7) {$\cdot$};\node at (.8,.7) {$\cdot$};
      \node at (-.6,.7) {$\cdot$};\node at (-.8,.7) {$\cdot$};
    \end{pic}$}
    =
    \raisebox{-1.5mm}{$\begin{pic}[xscale=.5,yscale=-.5]
      \node[bigdot] (d) at (0,0) {$m$};
      \draw[arrow=.9] (d) to[out=-160,in=90] +(-1,-1);
      \draw[arrow=.9] (d) to[out=-20,in=90] +(1,-1);
      \node at (0,-.7) {$\cdots$};
      \draw[reverse arrow=.9] (d) to[out=160,in=-90] +(-1,1);
      \draw[reverse arrow=.9] (d) to[out=20,in=-90] +(1,1);
      \draw[arrow=.42] (d) to[out=120,in=-90] +(-.4,1) to[out=90,in=-90] +(.8,.5);
      \draw[arrow=.42] (d) to[out=60,in=-90] +(.4,1) to[out=90,in=-90] +(-.8,.5);
      \node at (.05,.7) {$\cdots$};
      \node at (.6,.7) {$\cdot$};\node at (.8,.7) {$\cdot$};
      \node at (-.6,.7) {$\cdot$};\node at (-.8,.7) {$\cdot$};
    \end{pic}$}
  \]
\end{lemma}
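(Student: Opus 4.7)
The plan is to reduce each of the claimed equations to the already-established identities for Frobenius 3-structures: associativity~\eqref{eq:associativity:ternary}, symmetry~\eqref{eq:symmetry:ternary}, coassociativity~\eqref{eq:coassociativity:ternary}, the Frobenius law~\eqref{eq:frobenius:ternary}, the sliding axiom just introduced, and the observation that the two loops commute (as recorded in Definition~\ref{def:3-struc}). The overall strategy is to unfold each normal form into its explicit stack of alternating multiplications and comultiplications with loops attached at each level, and manipulate the stack piece by piece.

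For the bending equation~\eqref{tern:spider_bend}, I would proceed by induction on $m+n$. The base case $m=n=0$ reduces to the statement that a single $\tinymulttriple$ (or $\tinymulttripleflip$) can have one of its outputs bent into an input of the opposite type; this is exactly the Frobenius law~\eqref{eq:frobenius:ternary} combined with~\eqref{eq:symmetry:ternary}. For the inductive step, I would slide the closing cup past the topmost multiplication in the stack using~\eqref{eq:frobenius:ternary}, then apply the inductive hypothesis to the smaller stack that remains. The equation~\eqref{tern:spider_cap} then follows from~\eqref{tern:spider_bend} together with symmetry: closing a cap between two adjacent outputs is the same as bending one of them into an input and then contracting with its neighbour, which by associativity produces precisely one additional loop of the appropriate orientation, thereby incrementing $m$ (or $n$).

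The main step is the composition rule~\eqref{tern:spider_compose}, which I expect to be the chief obstacle. Here one must join the top of the lower normal form to the bottom of the upper one along a single wire and collapse the result. I would first handle the case $m = n = k = l = 0$, where both diagrams are bare mul/comul chains of length one; this reduces to the Frobenius law and coassociativity~\eqref{eq:coassociativity:ternary}. To lift this to the general case, I would push each loop of the lower diagram upwards through the chain using the sliding axiom, and each loop of the upper diagram downwards similarly; once all loops are concentrated in a central region between what was originally the top of the lower and the bottom of the upper diagram, commutativity of the loops lets me rearrange them freely, and the intervening mul/comul junctions can be flattened using associativity, coassociativity and~\eqref{eq:frobenius:ternary} to produce a single chain with $m+n$ left loops and $k+l$ right loops. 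The careful bookkeeping here is where sliding is indispensable: without it, loops produced by one structure could not be commuted past the junctions of the other, and the counts would not add.

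Finally, the commutative case follows formally: the assumption that $\tinymulttriple$ equals its opposite~\eqref{eq:opposite3} together with symmetric duality means that the swap on $A \otimes A$, on $A^* \otimes A^*$, and across the $A/A^*$ tensors may be absorbed into the multiplication. Inserting any permutation of outputs at the top of a normal form and reducing via the opposite-structure equation together with the already-proved spider rules~\eqref{tern:spider_bend}--\eqref{tern:spider_compose} yields the displayed swap invariance. The horizontal and vertical reflections of the three main equations follow by applying the same argument to the dual Frobenius 3-structure of Example~\ref{ex:dual3}, or equivalently by taking daggers in the dagger case.
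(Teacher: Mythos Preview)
Your proposal is essentially correct and uses the same ingredients as the paper (sliding to move loops, symmetry, the Frobenius law, associativity/coassociativity), but you organise the derivations in a different order. The paper proves \eqref{tern:spider_cap} first (a cap on two adjacent legs simply creates an extra loop, which sliding then moves into the required position), deduces \eqref{tern:spider_compose} from that together with associativity and the Frobenius law, and only then handles \eqref{tern:spider_bend} by sliding all loops away to reduce to $m=n=0$ and giving an explicit symmetry calculation for that loop-free case. Your ordering (bend $\to$ cap $\to$ compose) also works, and deriving \eqref{tern:spider_cap} from \eqref{tern:spider_bend} is a legitimate shortcut; the commutative case is handled the same way in both.

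One imprecision to fix: your ``base case $m=n=0$'' for \eqref{tern:spider_bend} is not a single $\tinymulttriple$ or $\tinymulttripleflip$, but the full loop-free spider, \ie\ an arbitrary multiplication chain composed with an arbitrary comultiplication chain. Bending a leg of this general spider is not literally a single application of \eqref{eq:frobenius:ternary} plus \eqref{eq:symmetry:ternary}; the paper's calculation expands the spider, applies symmetry~\eqref{eq:symmetry:ternary} at an internal dot, and reassembles. Relatedly, your inductive step in $m+n$ should invoke \emph{sliding} rather than the Frobenius law to strip off a loop, since the loops sit between the multiplication and comultiplication chains and cannot be removed by \eqref{eq:frobenius:ternary} alone.
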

\begin{proof}
  To see~\eqref{tern:spider_cap}, note that applying a cup or cap to a normal form simply adds an extra loop, and that sliding allows us to move loops freely around the diagrams. Equation~\eqref{tern:spider_compose} follows from this fact and by repeatedly applying associativity and the Frobenius law~\eqref{eq:frobenius:ternary}. For \eqref{tern:spider_bend}, again by moving loops around diagrams it suffices to consider when $m=n=0$. We then use symmetry: 
    \[
  \begin{pic}[xscale=-.66]
    \node[dot] (d) at (0,0) {};
    \draw[reverse arrow=.9] (d) to[out=-160,in=90] +(-.5,-.75);
    \draw[reverse arrow=.9] (d) to[out=-20,in=90] +(.5,-.75);
    \node at (0,-.5) {$\cdots$};
    \draw[arrow=.9] (d) to[out=20,in=-90] +(.5,.75);
    \node at (0,.5) {$\cdots$};
    \draw[arrow=.25, arrow=.97] (d) to[out=160,in=-90] +(-.5,.5) to[out=90,in=90,looseness=2] +(-.3,0) to +(0,-1.25); 
  \end{pic}
  =
  \scalebox{-1}[1]{$
  \begin{pic}[xscale=.75,yscale=.6]
    \node[dot] (a) at (.5,-.75) {};
    \node[dot] (b) at (0,-.3) {};
    \node[dot] (c) at (0,.3) {};
    \node[dot] (d) at (.5,.75) {};
    \draw[reverse arrow=.9] (a) to[out=-160,in=90] +(-.3,-.5);
    \draw[reverse arrow=.9] (a) to[out=-20,in=90] +(.3,-.5);
    \node at (.5,-1.1) {$\cdots$};
    \draw[arrow=.9] (d) to[out=160,in=-90] +(-.3,.5);
    \draw[arrow=.9] (d) to[out=20,in=-90] +(.3,.5);
    \node at (.5,1) {$\cdots$};
    \draw[arrow=.6] (c) to[out=0,in=-90] (d);
    \draw[arrow=.4] (a) to[out=90,in=0] (b);
    \draw[arrow=.7] (b) to (c);
    \draw[arrow=.9] (b) to +(0,-.95);
    \draw[reverse arrow=.9] (b) to[out=180,in=90,looseness=.7] +(-.4,-.95);
    \draw[reverse arrow=.9] (c) to +(0,.95);
    \draw[arrow=.2, arrow=.97] (c) to[out=180,in=-90] +(-.4,.5) to[out=90,in=90,looseness=2] +(-.3,0) to +(0,-2);
  \end{pic}
  $}
  =
    \scalebox{-1}[1]{$
  \begin{pic}[xscale=.75,yscale=.75]
    \node[dot] (a) at (.5,-.75) {};
    \node[dot] (b) at (0,-.25) {};
    \node[dot] (c) at (-.5,-.75) {};
    \node[dot] (d) at (0,.25) {};
    \draw[reverse arrow=.9] (a) to[out=-160,in=90] +(-.3,-.5);
    \draw[reverse arrow=.9] (a) to[out=-20,in=90] +(.3,-.5);
    \node at (.5,-1.1) {$\cdots$};
    \draw[arrow=.9] (d) to[out=160,in=-90] +(-.3,.5);
    \draw[arrow=.9] (d) to[out=20,in=-90] +(.3,.5);
    \node at (0,.5) {$\cdots$};
    \draw[arrow=.4] (a) to[out=90,in=0] (b);
    \draw[reverse arrow=.7] (b) to[out=180,in=90] (c);
    \draw[arrow=.9] (b) to +(0,-1);
    \draw[arrow=.7] (b) to (d);
    \draw[reverse arrow=.9] (c) to[out=0,in=90] +(.3,-.5);
    \draw[arrow=.9] (c) to +(0,-.5);
    \draw[reverse arrow=.125, reverse arrow=.95] (c) to[out=180,in=90] +(-.3,-.2) to[out=-90,in=-90,looseness=2] +(-.3,0) to +(0,1.7);
  \end{pic}
  $}
  =
  \begin{pic}[xscale=-.66]
    \node[dot] (d) at (0,0) {};
    \draw[reverse arrow=.9] (d) to[out=-20,in=90] +(.5,-.75);
    \node at (0,-.5) {$\cdots$};
    \draw[arrow=.9] (d) to[out=20,in=-90] +(.5,.75);
    \node at (0,.5) {$\cdots$};
    \draw[arrow=.9] (d) to[out=160,in=-90] +(-.5,.75);
    \draw[ reverse arrow=.25, reverse arrow=.97] (d) to[out=-160,in=90] +(-.5,-.5) to[out=-90,in=-90,looseness=2] +(-.3,0) to +(0,1.25); 
  \end{pic}  
  \]
  In the commutative case, associativity and commutativity give invariance under swapping legs of type $A$, while the same for legs of type $A^*$ follows by considering the dual Frobenius 3-structure.
\end{proof}

The statement and proof of the following theorem will frequently talk about `bending legs' of a morphism $f$. By this we mean turning it into $(\id \otimes \varepsilon) \circ (\pi \otimes \id) \circ (f \otimes \id)$ if bending an output, or into $(f \otimes \id) \circ (\pi \otimes \id) \circ (\id \otimes \eta)$ if bending an input, for some permutation $\pi$ built from the swap map. Graphically, bending an output looks as follows:
\[
  \begin{pic}[scale=.5,font=\small]
    \draw (-.2,.5) rectangle (1.5,1.5);
    \node at (.75,1) {$f$};
    \draw (0,1.5) to +(0,.6);
    \node[font=\tiny] at (.4,1.7) {$\dots$};
    \draw (.7,1.5) to +(0,.6);
    \draw (1,1.5) to +(0,.6);
    \draw (1.3,1.5) to +(0,.6);
    \draw (0,0) to +(0,.5);
    \node[font=\tiny] at (.4,0.2) {$\dots$};
    \draw (.7,0) to +(0,.5);
    \draw (1,0) to +(0,.5);
    \draw (1.3,0) to +(0,.5);
  \end{pic}  
  \qquad \leadsto \qquad
  \begin{pic}[scale=.5,font=\small]
    \draw (-.2,.5) rectangle (1.5,1.5);
    \node at (.75,1) {$f$};
    \draw (0,1.5) to +(0,.6);
    \node[font=\tiny] at (.4,1.7) {$\dots$};
    \draw (.7,1.5) to +(0,.1) to[out=90,in=90] +(1,.1) to +(0,-1.6);
    \draw (1,1.5) to +(0,.6);
    \draw (1.3,1.5) to +(0,.6);
    \draw (0,0) to +(0,.5);
    \node[font=\tiny] at (.4,0.2) {$\dots$};
    \draw (.7,0) to +(0,.5);
    \draw (1,0) to +(0,.5);
    \draw (1.3,0) to +(0,.5);
  \end{pic}  
\]

\begin{theorem}[Normal form for Frobenius 3-structures]
  Let $\tinymulttriple$ be a Frobenius 3-structure that satisfies sliding. 
  \begin{itemize} 
    \item Any Frobenius 3-structure diagram with at least one input is equal to one in normal form after bending at most one input and/or output. 
    \item If $\tinymulttriple$ is commutative, any commutative Frobenius 3-structure diagram is equivalent to one in normal form. 
  \end{itemize}
\end{theorem}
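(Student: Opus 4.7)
The plan is to proceed by induction on the number of generating pieces in the diagram $D$. For the base case, each generator, possibly after bending one leg, is already a $\binom{0}{0}$-spider: $\tinymulttriple$ and $\tinymulttripleflip$ are of this form outright, while each cup or cap reduces via bending and the snake equation to the identity wire, which is the $\binom{0}{0}$-spider with one input and one output.

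For the induction, write a connected diagram $D$ with at least two generators as a sub-diagram $D'$ to which a single generator $g$ is attached via $k \geq 1$ wires. By induction $D'$ equals a spider $\binom{m}{n}$ up to at most one leg-bend, and $g$ is a small spider up to at most one bend; it then suffices to show that joining two normal-form spiders by $k \geq 1$ wires yields a single normal-form spider. When $k = 1$, apply the fusion rule~\eqref{tern:spider_compose} of Lemma~\ref{lem:spiderrules}; when $k > 1$, first fuse along one shared wire via~\eqref{tern:spider_compose}, after which each remaining shared wire becomes a cup or cap between two legs of the combined spider and is absorbed into a left or right loop by~\eqref{tern:spider_cap}. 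Rule~\eqref{tern:spider_bend} is used freely throughout to reorient legs so that wire types $A$ and $A^*$ align at each gluing step. The main obstacle is tracking the alternating $A, A^*, A, \ldots$ types on the final spider's inputs and outputs: since every generator has an even number of external wires, so does $D$, and a parity count shows that bending at most one input and/or output of the final composite spider suffices to bring it into the required form $A \otimes A^* \otimes \cdots \otimes A$ on each side.

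For the commutative case the same induction applies, admitting additionally the swap map $\tinyswap$ as a possible piece. The swap-absorption identities at the end of Lemma~\ref{lem:spiderrules} show that swapping two like-typed legs of a commutative spider leaves it unchanged, so each swap in the diagram is absorbed into the surrounding spider. Commutativity identifies left and right loops so that one may take $n = 0$, and the permissive notion of equivalence in the commutative statement accommodates any remaining reshuffling of legs by symmetry maps.
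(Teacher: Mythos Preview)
Your approach is close to the paper's --- both argue by induction on the number of generators and invoke the spider-fusion rules of Lemma~\ref{lem:spiderrules} --- but the decomposition step has a real gap. You assert that a connected diagram $D$ with at least two generators can always be written as a connected sub-diagram $D'$ with a single generator $g$ attached. That such a non-cut generator exists is true (take a leaf of a spanning tree of the incidence graph of generators), but you never say so. More seriously, even granting this, the induction hypothesis requires $D'$ to have at least one \emph{input}, and your decomposition does not guarantee that: if $g$ is a cap carrying $D$'s sole external input on one leg while its other leg is the unique wire into $D'$, then every external wire of $D'$ is an output. You would need either to argue that $g$ can always be chosen so that $D'$ retains an input, or to strengthen the induction hypothesis to ``at least one external wire'' and revisit the base cases accordingly.

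The paper sidesteps both issues by a different decomposition: it first bends all external wires to outputs, then isolates a bottommost dot, whose four legs all feed upward into the remaining dots. The upper part is allowed to split into as many as four connected components, but each component automatically receives at least one of those upward legs as an input, so the induction hypothesis applies to each piece without further argument. This trades your single connected $D'$ for up to four pieces, but in exchange eliminates the existence and input-count questions entirely. Your route is salvageable with the fixes above; the paper's is simply more self-contained as written.
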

\begin{proof}
  We will describe transforming a diagram into normal form relatively informally. Making all transformations explicit is routine, but only obscures the main algorithm and needlessly inflate the exposition.

  First consider the non-commutative case. We argue by induction on the number of dots $d$. The case $d = 0$ holds as bending a cup or cap yields the identity morphism, and the case $d = 1$ follows directly from symmetry~\eqref{eq:symmetry:ternary}. 
  
  Consider a diagram built from $d + 1$ dots. Use Lemma~\ref{lem:spiderrules} to bend wires and so assume there are no inputs. Naturality lets us rewrite the diagram so that it has a bottommost dot, and it is equal to one of the form: 
  \[
  \begin{pic}[scale=.5]
    \node[dot] (d) at (0,0) {};
    \draw[arrow=.7] (d) to (0,.5);
    \draw[reverse arrow=.9] (d) to[out=0,in=90] +(.4,-.3) to[out=-90,in=-90,looseness=2] +(.3,0) to (.7,.5);
    \draw[arrow=.93] (d) to[out=-90,in=-90,looseness=2] (1,-.2) to (1,.5);
    \draw[reverse arrow=.95] (d) to[out=180,in=90] +(-.4,-.4) to[out=-90,in=-90,looseness=1.3] (1.3,-.5) to (1.3,.5);
    \draw[dashed, draw=gray] (-.2,.5) rectangle (1.75,1.5);
    \node at (.75,1.1) {$d\!-\!1$};
    \node at (.75,.85) {dots};
    \draw[arrow=.9] (0,1.5) to +(0,.5);
    \node at (.4,1.7) {$\dots$};
    \draw[reverse arrow=.85] (.7,1.5) to +(0,.5);
    \draw[arrow=.9] (1,1.5) to +(0,.5);
    \draw[reverse arrow=.85] (1.3,1.5) to +(0,.5);
  \end{pic}
  \qquad \text{or} \qquad
  \begin{pic}[xscale=-.5,yscale=.5]
    \node[dot] (d) at (0,0) {};
    \draw[arrow=.7] (d) to (0,.5);
    \draw[reverse arrow=.9] (d) to[out=0,in=90] +(.4,-.3) to[out=-90,in=-90,looseness=2] +(.3,0) to (.7,.5);
    \draw[arrow=.93] (d) to[out=-90,in=-90,looseness=2] (1,-.2) to (1,.5);
    \draw[reverse arrow=.95] (d) to[out=180,in=90] +(-.4,-.4) to[out=-90,in=-90,looseness=1.3] (1.3,-.5) to (1.3,.5);
    \draw[dashed, draw=gray] (-.2,.5) rectangle (1.75,1.5);
    \node at (.75,1.1) {$d\!-\!1$};
    \node at (.75,.85) {dots};
    \draw[arrow=.9] (0,1.5) to +(0,.5);
    \draw[reverse arrow=.85] (.7,1.5) to +(0,.5);
    \draw[arrow=.9] (1,1.5) to +(0,.5);
    \draw[reverse arrow=.85] (1.3,1.5) to +(0,.5);
        \node at (.3,1.7) {$\dots$};
  \end{pic}
  \]
  (This includes the case where the bottommost dot originally had no output wires at all. Because the diagram of $d$ dots was connected, in that case the upper subdiagram of $d-1$ dots must connect two inputs with a cup.)
  By induction hypothesis the upper subdiagram consists of (at most four) disconnected pieces, each in normal form up to bending legs. Applying the rules of Lemma~\ref{lem:spiderrules} it follows that their composite with the lower subdiagram indeed equals one in normal form, after bending one output.

  The commutative case follows in the same way. The base case $d=0$ is trivial since duals are assumed to be symmetric, and the base case $d=1$ is straightforward by commutativity. The rest of the argument is identical using Lemma~\ref{lem:spiderrules}.
\end{proof}

\begin{remark}
  The result does not hold for diagrams with no input or output in the non-commutative case, since for example the closed loop $\tinycapswap \circ \tinycup$ need not coincide with the closed loop $\tinycap \circ \tinycupswap$.
\end{remark}

\begin{corollary} \label{cor:normalform_consequences}
  In a monoidal category:
  \begin{itemize}
    \item any two Frobenius 3-diagrams of a sliding Frobenius 3-structure that are of the same type and have no internal loops are equal;
    \item any two Frobenius 3-diagrams of a normal Frobenius 3-structure that are of the same type and have more than one input or output are equal.
    \qed
  \end{itemize}
\end{corollary}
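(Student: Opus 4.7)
The strategy for both bullets is to apply the normal form theorem directly. For any two Frobenius 3-diagrams $D_1, D_2 \colon X \to Y$ of the same type, bending a leg is an invertible operation (via the cup/cap isomorphism $\hom(X \otimes A, Y) \cong \hom(X, Y \otimes A^*)$), so $D_1 = D_2$ if and only if the versions $\hat{D}_1, \hat{D}_2$ obtained by bending the same leg in each are equal. We may thus, if necessary, bend one leg in each so as to arrange that both have at least one input and alternating input/output type of the form $A \otimes A^* \otimes \cdots \otimes A$, as required by the normal form theorem. That theorem then writes each $\hat{D}_i = N(m_i, n_i)$ for a normal form whose input and output types coincide for $i=1,2$ (since those are determined by the common type of the bent diagrams), parameterised only by the left- and right-loop numbers $m_i, n_i$.

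For the first bullet, the additional hypothesis is that the original diagrams have no internal loops, and I would argue that this forces $m_i = n_i = 0$. Indeed, the rewriting used in the proof of the normal form theorem consists of associativity and the Frobenius law (Lemma~\ref{lem:coassociative:ternary}), sliding, cup/cap isotopies, and the spider composition rules of Lemma~\ref{lem:spiderrules}. None of these introduces cycles where none existed: spider composition merges two acyclic spiders into one acyclic spider, sliding moves a loop rigidly along a wire, and cup/cap rewriting is isotopy. Thus each acyclic $\hat{D}_i$ reduces to $N(0, 0)$. Two normal forms of identical input/output type with zero loops are by definition the same diagram, so $\hat{D}_1 = \hat{D}_2$ and unbending gives $D_1 = D_2$.

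For the second bullet, the structure being normal means that the left and right loops $l_A$ and $r_A$ are identities. Since these are precisely the loops that appear in the normal form diagram, each of the $m$ left and $n$ right loops collapses to an identity, so $N(m, n) = N(0, 0)$ for all $m, n$. Consequently both $\hat{D}_1$ and $\hat{D}_2$ equal $N(0, 0)$, independently of their individual loop counts, and so $D_1 = D_2$. The condition ``more than one input or output'' is used to guarantee that, after bending at most one leg, the resulting diagram has at least one input, as needed to apply the normal form theorem; the Remark following the theorem shows that this hypothesis cannot simply be dropped, since in the non-commutative case distinct scalars such as $\tinycapswap \circ \tinycup$ and $\tinycap \circ \tinycupswap$ can arise.

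The main obstacle is the topological fact invoked in the first bullet, namely that the rewrites producing the normal form do not introduce new cycles. While this is intuitively obvious from the graphical calculus, a rigorous justification requires walking through the inductive construction in the proof of the normal form theorem and verifying at each step that no spider merge or Frobenius rewrite creates a loop out of acyclic input. The second bullet is then essentially immediate, as it only uses that normality collapses all loops to identities, bypassing any counting of cycles entirely.
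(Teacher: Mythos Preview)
Your proof is correct and matches the paper's approach: the corollary carries only a \qed, so it is meant to be read off directly from the normal form theorem, exactly as you do. The topological fact you flag is genuine but easily settled by noting that every rewrite used in the normal form proof (associativity, Frobenius, sliding, bending, and the spider rules) preserves the first Betti number of the underlying graph, so an acyclic diagram can only reduce to a normal form with $m=n=0$.
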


\section{Connectors}\label{sec:connectors}

In this section, we extend the correspondence between Frobenius 2-structures and groupoids to one between 3-structures and connectors. 

In any category $\catC$ with pullbacks one can define a double equivalence relation as an internal equivalence relation in the category of internal equivalence relations in $\catC$. In order to compare this notion with the one of normal-dagger Frobenius $3$-structure, the following formulation of the notion of double equivalence relation will be useful:
\begin{definition} \label{def:double_eq} 
A \emph{double equivalence relation} on two equivalence relations $R$ and $S$ on an object $A$ in $\catC$ is a subobject $\Lambda \rightarrowtail A^4$ such that:
\begin{itemize}
\item the equivalence relations $R, S$ satisfy $R(x,y) \iff \Lambda(x,y,y,x)$ and $S(y,z) \iff \Lambda(y,y,z,z)$;
\item the relations $(x,y)r(u,z) \iff \Lambda(x,y,z,u) \iff (y,z)l(x,u)$ are equivalence relations on $R, S$, respectively.
\end{itemize}
In particular, $\Lambda(x,y,z,u) \implies R(x,y), R(z,u), S(y,z), S(x,u)$.
\end{definition}


\begin{proposition} \label{prop:double_eq_rel}
Let $\catC$ be a regular category. There is a one-to-one correspondence between double equivalence relations  on subobjects $B \rightarrowtail A$ and dagger symmetric, left and right idempotent morphisms $(A, \tinymulttriple)$ in $\Rel(\catC)$. 
\end{proposition}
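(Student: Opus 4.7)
My strategy is to represent the morphism $\tinymulttriple \colon A \otimes A^* \otimes A \to A$ in $\Rel(\catC)$ by its underlying $4$-ary relation $\Lambda \rightarrowtail A^4$, writing $\Lambda(x,y,z,u)$ for ``$u \in M(x,y,z)$'', and to match each of the three diagrammatic conditions (dagger symmetry, left idempotence, right idempotence) to the axioms of Definition~\ref{def:double_eq}. The object $B \rightarrowtail A$ of the proposition will be recovered as the image of any projection $\Lambda \to A$; for this image to be independent of the chosen projection, we will first need enough symmetries of $\Lambda$. All manipulations can then be carried out in the regular logic of $\catC$, justified as in the proof of Theorem~\ref{thm:frobeniusstructuresaregroupoids}.

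The first step is to unpack dagger symmetry $\tinymulttripleflip = \tinymulttriple^\dag$ together with equation~\eqref{eq:symmetry:ternary}. Reading both sides off as $4$-ary relations, I expect dagger symmetry to yield the two involutions
\[
    \Lambda(x,y,z,u) \iff \Lambda(y,x,u,z), \qquad \Lambda(x,y,z,u) \iff \Lambda(u,z,y,x),
\]
which generate a Klein-four action on the coordinates of $\Lambda$. This action implies that all four projections $\Lambda \to A$ share a common image $B \rightarrowtail A$, and it makes the relations $R(x,y) := \Lambda(x,y,y,x)$ and $S(y,z) := \Lambda(y,y,z,z)$ automatically symmetric subobjects of $B \times B$; likewise it makes $r((x,y),(u,z)) := \Lambda(x,y,z,u)$ and $l((y,z),(x,u)) := \Lambda(x,y,z,u)$ symmetric relations on $R$ and on $S$.

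Next I would read the endomorphisms $l_A = \tinylidem$ and $r_A = \tinyridem$ off as binary relations on $A^* \otimes A$ and $A \otimes A^*$, and translate their idempotence $l_A \circ l_A = l_A$ and $r_A \circ r_A = r_A$ into regular logic. Combined with the Klein-four symmetries, idempotence of $l_A$ should directly deliver both reflexivity and transitivity of $l$ as a relation on $S$, upgrading it to an equivalence relation on $S$ and in particular forcing $S$ itself to be an equivalence relation on $B$; symmetrically for $r_A$, $r$, and $R$. The boundary implication $\Lambda(x,y,z,u) \Rightarrow R(x,y) \wedge R(z,u) \wedge S(y,z) \wedge S(x,u)$ is then immediate by applying the Klein-four symmetries and collapsing the appropriate coordinates against the definitions of $R$ and $S$.

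For the converse, given a double equivalence relation on $B \rightarrowtail A$, its underlying subobject $\Lambda \rightarrowtail B^4 \hookrightarrow A^4$ defines a morphism $M$ whose dagger symmetry is forced by the symmetry axioms for $r$ and $l$, and whose left- and right-idempotence is forced by the corresponding transitivity axioms. The main obstacle I anticipate is the initial diagrammatic-to-regular-logic translation of the two loops and of~\eqref{eq:symmetry:ternary}: one must extract exactly the two generators of the Klein four-group (not one weaker symmetry) and exactly the correct transitivity conditions, in order for the match with Definition~\ref{def:double_eq} to be tight in both directions. Once those translations are pinned down, both directions reduce to routine regular-logic bookkeeping.
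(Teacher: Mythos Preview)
Your plan is correct and follows essentially the same route as the paper: identify dagger symmetry with the two involutions $\Lambda(x,y,z,u)\!\iff\!\Lambda(u,z,y,x)\!\iff\!\Lambda(y,x,u,z)$, and read left/right idempotence as $l=l\circ l$ and $r=r\circ r$, which together with those symmetries make $l$, $r$ partial equivalence relations restricting to equivalence relations on $S$ and $R$. The only cosmetic difference is that the paper pins down $B$ as $\{x\mid \Lambda(x,x,x,x)\}$ rather than as a common projection image, but these coincide once the symmetries and transitivity are in place.
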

\begin{proof}
Consider $\tinymulttriple$ as a subobject of $A^4$. Symmetry of the relations $l$ and $r$ amounts to the rule:
\begin{equation} \label{eq:sym_lambda}
\tinymulttriple(x,y,z,u) \iff \tinymulttriple(u,z,y,x) \iff \tinymulttriple(y,x,u,z)
\end{equation}
which is easily seen to be equivalent to dagger symmetry of $\tinymulttriple$. Right dagger idempotence is precisely the requirement that $r$, defined as above, satisfies $r = r^{\oprel} = r \circ r$. Equivalently, $r$ restricts to an equivalence relation on $R = \{(x,y) \mid (x,y)r(x,y)\}$, as required, and dually the same holds for $l$ and $S$. 

It remains to check that the rest of Definition~\ref{def:double_eq} holds automatically. Symmetry of the relation $R$ follows from~\eqref{eq:sym_lambda}, while transitivity follows from transitivity of $l$, observing that $R(x,y) \iff (x,x) l (y,y)$. Equivalently, $R$ restricts to an equivalence relation on $B = \{x \mid R(x,x) \} = \{x \mid \tinymulttriple(x,x,x,x)\}$. Dually, the same holds for $S$.
\end{proof}

For any pair of equivalence relations $r_1, r_2 \colon R \rightrightarrows A $ and $s_1, s_2 \colon S \rightrightarrows A$ we write $R \times_A S$ for the pullback of $r_2$ and $s_1$, $\ie$ the following subobject
\[
  R \times_A S = \sem{(x,y,z) \in A \times A \times A \mid R(x,y) \wedge S(y,z)}.
\]

\begin{definition}
  A \emph{connector} between equivalence relations $R \rightrightarrows A$ and $S \rightrightarrows A$ in a 
regular category 
  is a morphism $p\colon R \times_A S \to A$ such that:
  \begin{align}
   \label{con_xSpxyz}  x  S  p(x,y,z) & & \text{ for }(x,y,z) \in R \times_A S\text{;}  \\
   \label{con_zSpxyz} z  R p(x,y,z) & & \text{ for }(x,y,z) \in R \times_A S\text{;}  \\
   \label{con_pxxy} p(x,y,y) & = x & \text{ whenever defined;}\\
   \label{con_pxyy} p(y,y,z) & = z & \text{whenever defined;}\\
   \label{con_assoc} p(p(x,y,z),u,v) & = p(x,y,p(z,u,v))\text{;}
  \end{align}
  where~\eqref{con_assoc} means that if one side defined so is the other and they are equal. 
\end{definition}
The (object of) morphisms of any (internal) groupoid forms a connector where $p(x,y,z)=x \circ y^{-1} \circ z$, with $R(x,y)$ whenever $s(x) = s(y)$ and $S(y,z)$ whenever $t(y)=t(z)$. Viewing a vector space as an additive group, for vectors $x,y,z$ the vector $p(x,y,z)$ can be visualised as completing the parallelogram~\cite{Smith}.
\[\begin{tikzpicture}
  \node[left] at (.5,1) {$x$};
  \node[left] at (0,0) {$y$};
  \node[right] at (2,.5) {$z$};
  \node[right] at (2.5,1.5) {$p(x,y,z)=x-y+z$};
  \draw (2,.5) to (0,0) to (.5,1);
  \draw[dashed] (.5,1) to (2.5,1.5) to (2,.5);
\end{tikzpicture}\]


Any connector $(A, R, S, p)$ may be seen to define a double equivalence relation with $\Lambda(x,y,z,u) \iff p(x,y,z) = u$, and then its equivalence relations $R, S$ coincide with those of Definition~\ref{def:double_eq}. The double equivalence relations arising this way are called \emph{pregroupoids}, and indeed this is how these structures were first studied~\cite{kock1988generalized} (see also \cite{Johnstone}). Later on it became clear that the assumption that $R$ and $S$ were \emph{effective} equivalence relations could be dropped to develop commutator theory in the larger context of regular categories, and this led to the notion of connector \cite{bourngran:centrality, CentralityNormality}.

\begin{remark}
  In universal algebra connectors are useful to treat commutators in categorical terms. For instance, in the category $\cat{Gp}$ of groups two normal subgroups $K$ and $L$ of a group $G$ have trivial commutator, i.e. $[K,L]=\{1\}$, if and only if there is a connector $p \colon {R_K} \times_G  R_L \rightarrow G$ between the congruences $R_K$ and $R_L$ canonically associated with the quotients $G/K$ and $G/L$, respectively. Similarly, in the regular Mal'tsev category $\cat{Gp(Haus})$ of Hausdorff groups, the topological closure $\overline{[K,L]}= \{1\}$ of the group-theoretic commutator of two normal closed subgroups $K$ and $L$ of a Hausdorff group $G$ is trivial if and only if there is a connector between the corresponding congruences. In the category $\cat{CRng}$ of commutative (not necessarily unital) rings the role of the group-theoretic commutator is played by the product of ideals: two ideals $I$ and $J$ of a commutative ring $R$ are such that $I\cdot J=0$ if and only if there is a connector between the corresponding congruences $R_I$ and $R_J$. In all these examples a connector on two given congruences is unique, when it exists \cite{bourngran:centrality}.
\end{remark}

\begin{theorem}\label{thm:frobenius3structuresareconnectors}
  Let $\cat{C}$ be a regular category.
  There is a one-to-one correspondence between connectors $(A,p)$ in $\cat{C}$ and normal dagger Frobenius 3-structures $(A,\tinymulttriple)$ in $\Rel(\cat{C})$, where $\tinymulttriple \colon (x,y,z) \mapsto p(x,y,z)$ whenever $p(x,y,z)$ is defined.
\end{theorem}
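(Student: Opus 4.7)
The plan is to leverage Proposition~\ref{prop:double_eq_rel}, observing that a connector is essentially a pregroupoid, that is, a double equivalence relation whose induced 4-ary relation is the graph of a total morphism, equipped with an associativity law. Since every normal dagger Frobenius 3-structure is automatically dagger symmetric and (because the identity is idempotent) left and right idempotent, Proposition~\ref{prop:double_eq_rel} assigns to it a double equivalence relation $\Lambda \rightarrowtail A^4$ on a subobject $B \rightarrowtail A$ together with equivalence relations $R$ and $S$. It then remains to interpret the normality and associativity conditions on $\tinymulttriple$ as conditions on $\Lambda$ matching the connector axioms, in both directions.

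Going from a normal dagger Frobenius 3-structure to a connector, the crucial step will be to show that the loops being \emph{identities}, rather than merely idempotents, forces $\Lambda$ to be single-valued in its fourth argument: for every $(x,y,z) \in R \times_A S$ there is a unique $u$ with $\Lambda(x,y,z,u)$. I would verify this by unfolding the graphical left- and right-loop equations into regular logic on $M = \tinymulttriple$ and comparing with the dagger idempotence conditions already exploited in the proof of Proposition~\ref{prop:double_eq_rel}. Once $\Lambda$ is identified with the graph of a morphism $p\colon R \times_A S \to A$ in $\catC$, the connector axioms follow: \eqref{con_xSpxyz} and \eqref{con_zSpxyz} are immediate consequences of the structure of $\Lambda$ as a double equivalence relation (namely that $\Lambda(x,y,z,u) \Rightarrow S(x,u) \wedge R(z,u)$, as in Definition~\ref{def:double_eq}); \eqref{con_pxxy} and \eqref{con_pxyy} are the regular-logic forms of the loop identities~\eqref{eq:loops}; and \eqref{con_assoc} is a direct translation of associativity~\eqref{eq:associativity:ternary}.

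For the converse direction, given a connector $(A,p)$ on equivalence relations $R,S$, I would define $\tinymulttriple$ as the graph relation $\sem{(x,y,z,u) \mid (x,y,z) \in R \times_A S \wedge p(x,y,z) = u}$ in $\Rel(\catC)$ and verify each 3-structure axiom from the connector axioms. Dagger symmetry is the symmetry relation~\eqref{eq:sym_lambda} on the graph of $p$, a short calculation in regular logic from \eqref{con_xSpxyz}--\eqref{con_pxyy}; normality of the left and right loops follows from~\eqref{con_pxxy} and~\eqref{con_pxyy}; and 3-associativity~\eqref{eq:associativity:ternary} is precisely~\eqref{con_assoc}. The main obstacle throughout is the forward direction's unpacking of the loop identities into regular logic on $\Lambda$, as this is where the novel content of the theorem lies; once single-valuedness of $\Lambda$ is established, the remaining correspondences between Frobenius 3-axioms and connector axioms are essentially routine.
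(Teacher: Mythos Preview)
Your approach is essentially the paper's: both routes pass through Proposition~\ref{prop:double_eq_rel} and translate the 3-structure axioms into regular logic on the 4-ary relation $\Lambda = \tinymulttriple$. One point needs care, however. You claim that the loop identities alone force $\Lambda$ to define a total map on $R \times_A S$. Single-valuedness does follow from the loops together with the double-equivalence-relation structure: if $\Lambda(x,y,z,u)$ and $\Lambda(x,y,z,u')$, then symmetry and transitivity of $r$ give $\Lambda(u,z,z,u')$, whence $u=u'$ by the right-loop identity. But \emph{totality} on $R \times_A S$ --- that $R(x,y)\wedge S(y,z)$ implies $(x,y,z)\defined$ --- genuinely requires associativity, not just normality. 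From $R(x,y)$ and $S(y,z)$ one only has $p(x,y,y)=x$ and $p(z,z,y)=y$; it is associativity that then yields $x = p(x,y,p(z,z,y)) = p(p(x,y,z),z,y)$, forcing $p(x,y,z)$ to be defined. The paper carries out exactly this step. Once you insert it, your argument is complete and coincides with the paper's.
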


\begin{proof}
Any normal 3-structure satisfies $\tinymulttriple \circ \tinymulttripleflip = \id[]$, hence corresponding to a partial function $p \colon A \times A \times A \relto A$ in $\catC$. Let us write $(x,y,z) \defined$ to mean that $p(x,y,z)$ is defined. Associativity~\eqref{eq:associativity:ternary} of $\tinymulttriple$ translates precisely to associativity~\eqref{con_assoc} of connectors, while normality translates to:
\begin{align} \label{eq:norm_assoc_pregroupoid}
    (\exists y) \ p(x,y,y) = u \iff x = u \qquad&
    \qquad (\exists y) \   p(y,y,z) = u \iff z = u
  \end{align}
which gives~\eqref{con_pxxy} and \eqref{con_pxyy}. 

By Proposition~\ref{prop:double_eq_rel}, we have that $\tinymulttriple$ forms a double equivalence relation on $A$. Defining $R$ and $S$ as for any double equivalence relation, this gives~\eqref{con_xSpxyz} and~\eqref{con_zSpxyz} automatically, and that $(x,y,z) \defined \implies R(x,y) \wedge S(y,z)$. Finally, we check the converse, so that $p$ is indeed defined on $R \times_A S$. Suppose that $R(x,y)$ and $S(y,z)$, so that $x = p(x,y,y)$ and $y = p(z,z,y)$. Then by associativity $x = p(x,y,p(z,z,y)) = p(p(x,y,z),z,y)$, and so $(x,y,z)\defined$, as required.

In the other direction, for any connector $(A,R, S, p)$, normality~\eqref{eq:norm_assoc_pregroupoid} for $\tinymulttriple$ follows from~\eqref{con_pxxy} and~\eqref{con_pxyy}, since $R$ and $S$ are reflexive and so $p(x,x,x) =x$ for all $x$ by~\eqref{con_pxxy}. It remains to check dagger symmetry for $\tinymulttriple$, which we translated earlier as~\eqref{eq:sym_lambda}. Suppose that $u = p(x,y,z)$. Then since $S$ is symmetric we have $S(z,y)$ and hence
$
p(u,z,y) = p(p(x,y,z),z,y) = p(x,y,p(z,z,y)) = p(x,y,y) = x
$
as required. Dually, one may show that $p(y,x,u) = z$, completing~\eqref{eq:sym_lambda}. 

 Finally, note that this correspondence is an equivalence since for any connector $(A, R, S, p)$ the relations $R$ and $S$ are defined in terms of the double equivalence relation $\Lambda = \tinymulttriple$ as in Definition~\ref{def:double_eq}. To see this for $R$, observe that $p(x,y,y) = x \iff (x,y,y)\defined \iff R(x,y) \wedge S(y,y)$ since $p$ is defined on the pullback $R \times_A S$. But since $S$ is reflexive this holds precisely when $R(x,y)$. 
\end{proof}

\begin{remark}\label{rem:faithful}
  The functor $\Rel(\Conn(\cat{C})) \to \Rel(\cat{C})$ given by $(A, R, S, p) \mapsto A$ is \emph{not} faithful for regular categories $\cat{C}$. Therefore we cannot upgrade Theorem~\ref{thm:frobenius3structuresareconnectors} to an equivalence of categories as in Theorem~\ref{thm:frobeniusstructuresaregroupoids} using a ternary analogue to the $\CP$ construction. 
  To see this, let $A$ be a non-trivial abelian group in $\cat{C}$, and $p \colon A^3 \rightarrow A$ the canonical connector between $A^2= A \times A \rightrightarrows A$ and $A^2=A \times A \rightrightarrows A$ defined by $p(x,y,z)= x-y+z$. The following commutative diagram
  \[\begin{tikzpicture}
    \node (b) at (0,0) {$A$};
    \node (l) at (-1,.9) {$A$};
    \node (r) at (1,1.2) {$A$};
    \node (t) at (0,2) {$A$};
    \node (B) at (5,0) {$A$};
    \node (L) at (4,.9) {$A^2$};
    \node (R) at (6,1.2) {$A^2$};
    \node (T) at (5,2) {$A^3$};
    \draw[->] (t) to node[left,font=\tiny]{$\id$} (l.north);
    \draw[->] (t) to node[right,font=\tiny]{$\id$} (r.north);
    \draw[->] ([xshift=-1mm]l.south) to node[below,font=\tiny]{$\id$} ([xshift=-3mm]b.north);
    \draw[->] ([xshift=1mm]l.south) to node[above,font=\tiny]{$\id$} ([xshift=-1mm]b.north);
    \draw[->] ([xshift=-.5mm]r.south) to node[left,font=\tiny]{$\id$} ([xshift=.5mm]b.north);
    \draw[->] ([xshift=1mm]r.south) to node[right,font=\tiny]{$\id$} ([xshift=2mm]b.north);
    \draw[->] (T) to (L);
    \draw[->] (T) to (R);
    \draw[->] ([xshift=-.5mm]L.south) to ([xshift=-3mm]B.north);
    \draw[->] ([xshift=1.5mm]L.south) to ([xshift=-1mm]B.north);
    \draw[->] ([xshift=-1.5mm]R.south) to ([xshift=.5mm]B.north);
    \draw[->] (R.south) to ([xshift=2mm]B.north);  
    \draw[->] (T) to node[right=-1mm,font=\tiny]{$p$} (B.north);
    \draw[->] (b) to node[below]{$\id$} (B);
    \draw[->] (t) to node[above]{$(\id,\id,\id)$} (T);
    \draw[->] (l) to[out=-5,in=-175] node[below,pos=.7]{$(\id , \id)$} (L);
    \draw[->] (r) to[out=5,in=175] node[above,pos=.4]{$(\id , \id)$} (R);
  \end{tikzpicture}\]
  provides a morphism $\id \colon (A, A, A, \id[A]) \to (A, A^2, A^2, p)$ of connectors that is not an isomorphism of connectors, because the diagonal morphism $(\id , \id) \colon A \to A^2$ is not an isomorphism. 
  Then, since the diagram
  \[\begin{tikzpicture}[xscale=3,yscale=.75]
    \node (t) at (0,2) {$(A, A, A, \id[A])$};
    \node (b) at (0,0) {$(A, A^2, A^2, p)$};
    \node (m) at (1,1) {$(A,A^2, A^2, p)$};
    \draw[>->] (t) to node[above]{$\id$} (m);
    \draw[>->] (b) to node[below]{$\id$} (m);
    \draw[->] (t) to node[left]{$\id$} (b);
  \end{tikzpicture}\]
  commutes, the functor $\Rel(\Conn(\cat{C})) \to \Rel(\cat{C})$ is not faithful, since the left vertical morphism is an isomorphism when we look at it in $\catC$.
  \qed
\end{remark}

\section{Relating Frobenius 3-structures and Frobenius 2-structures}\label{sec:2vs3}

In this final section we exhibit several relationships between Frobenius 2-structures and Frobenius 3-structures, generalising those between groupoids and connectors.

A Frobenius 2-structure (on a dual object) is \emph{symmetric} when
\begin{align}\label{eq:symmetric:binary}
  \begin{pic}[scale=.4]
    \node[dot] (d) at (0,0) {};
    \draw (d.north) to +(0,.6) node[dot]{};
    \draw[reverse arrow=.9] (d.west) to[out=180,in=90] +(-.7,-1.5);
    \draw[reverse arrow=.95] (d.east) to[out=0,in=90] +(.5,-.5) to[out=-90,in=-90,looseness=1.5] +(1,0) to +(0,2);
  \end{pic} 
  & = 
  \begin{pic}[xscale=-.4,yscale=.4]
    \node[dot] (d) at (0,0) {};
    \draw (d.north) to +(0,.6) node[dot]{};
    \draw[reverse arrow=.9] (d.east) to[out=180,in=90] +(-.7,-1.5);
    \draw[reverse arrow=.95] (d.west) to[out=0,in=90] +(.5,-.5) to[out=-90,in=-90,looseness=1.5] +(1,0) to +(0,2);
  \end{pic} 
\end{align}

We call the map~\eqref{eq:symmetric:binary} the \emph{involution} for $\tinymult$ and draw it as $\tinyinvolution$.
A dagger Frobenius 3-structure (on a dagger dual object) is \emph{unital} when there exists a morphism $\ \ \tinyunit \colon I \to A$ satisfying
\begin{align}\label{eq:unital:ternary}
  \begin{pic}[scale=.4]
    \node[dot] (d) at (0,0) {};
    \draw[arrow=.8] (d) to +(0,1);
    \draw[arrow=.5] (d) to +(0,-1) node[dot]{};
    \draw[reverse arrow=.7] (d) to[out=180,in=90] +(-1,-1) node[dot]{};
    \draw[reverse arrow=.9] (d) to[out=0,in=90,looseness=.8] +(1,-2);
  \end{pic} 
  =
  \begin{pic}[scale=.4]
    \draw[arrow=.5] (0,0) to (0,3);
  \end{pic}
  =
  \begin{pic}[xscale=-.4,yscale=.4]
    \node[dot] (d) at (0,0) {};
    \draw[arrow=.8] (d) to +(0,1);
    \draw[arrow=.5] (d) to +(0,-1) node[dot]{};
    \draw[reverse arrow=.7] (d) to[out=180,in=90] +(-1,-1) node[dot]{};
    \draw[reverse arrow=.9] (d.west) to[out=0,in=90,looseness=.8] +(1,-2);
  \end{pic} 
\end{align}
where $\tinyunitdual = (\tinyunit)_* \colon I \to A^*$.

\begin{theorem} \label{thm:2struct_equal_unit3struc} 
  There is a one-to-one correspondence between dagger symmetric Frobenius 2-structures and unital dagger Frobenius 3-structures:
  \[
    \begin{pic}[xscale=.75]
      \node[dot] (d) at (0,0) {};
      \draw[arrow=.9] (d) to +(0,1);
      \draw[arrow=.95] (d) to +(0,-1);
      \draw[reverse arrow=.95] (d.west) to[out=180,in=90] +(-.8,-1);
      \draw[reverse arrow=.95] (d.east) to[out=0,in=90] +(.8,-1);
    \end{pic}
    =
    \begin{pic}[scale=.5]
      \node[dot] (l) at (0,0){};
      \node[dot] (r) at (1,1){};
      \node[dot] (i) at (1,-1){};
      \draw[arrow=.5] (l.north) to[out=90,in=180] (r.west);
      \draw[arrow=.9] (r.north) to +(0,1);
      \draw[reverse arrow=.95] (l.west) to[out=180,in=90] ([xshift=-2cm,yshift=-5mm]i.south);
      \draw[reverse arrow=.5] (l.east) to[out=0,in=90] (i.north);
      \draw[arrow=.8] (i.south) to +(0,-.5);
      \draw[reverse arrow=.95] (r.east) to[out=0,in=90] ([xshift=15mm,yshift=-5mm]i.south);
    \end{pic}
    \qquad \qquad
    \begin{pic}[xscale=.66]
      \node[dot] (d) at (0,0) {};
      \draw[arrow=.9] (d) to +(0,1);
      \draw[reverse arrow=.95] (d.west) to[out=180,in=90] +(-.8,-1);
      \draw[reverse arrow=.95] (d.east) to[out=0,in=90] +(.8,-1);
    \end{pic}
    =
    \begin{pic}[xscale=.75]
      \node[dot] (d) at (0,0) {};
      \draw[arrow=.9] (d) to +(0,1);
      \draw[arrow=.8] (d) to +(0,-.7) node[dot]{};
      \draw[reverse arrow=.95] (d.west) to[out=180,in=90] +(-.8,-1);
      \draw[reverse arrow=.95] (d.east) to[out=0,in=90] +(.8,-1);
    \end{pic}
  \]
  The Frobenius 2-structure is special if and only if the Frobenius 3-structure is normal if and only if the Frobenius 3-structure is left or right idempotent.
\end{theorem}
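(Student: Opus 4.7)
The plan is to verify that the two diagrammatic formulas in the theorem statement define mutually inverse constructions between dagger symmetric Frobenius $2$-structures and unital dagger Frobenius $3$-structures, and then to establish the equivalence of speciality, normality, and left/right idempotency.

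In the $2 \to 3$ direction, starting from $(A, \tinymult, \tinyunit)$, I define $\tinymulttriple$ by the right-hand side of the first displayed equation and keep $\tinyunit$ as the unit of the $3$-structure. Associativity~\eqref{eq:associativity:ternary} follows from associativity of $\tinymult$ applied twice; dagger symmetry~\eqref{eq:symmetry:ternary} follows from the symmetry axiom~\eqref{eq:symmetric:binary}; unitality~\eqref{eq:unital:ternary} follows from the unit law for $\tinymult$ together with the identity $\tinyinvolution \circ \tinyunit = \tinyunitdual$ that holds for any dagger symmetric Frobenius $2$-structure. In the reverse direction, setting $\tinymult(x, y) := \tinymulttriple(x, \tinyunitdual, y)$, associativity and unitality of $\tinymult$ follow immediately from~\eqref{eq:associativity:ternary} and~\eqref{eq:unital:ternary}; the Frobenius law~\eqref{eq:frobenius:binary} follows from the ternary Frobenius law~\eqref{eq:frobenius:ternary} by inserting the unit on both sides; the symmetry~\eqref{eq:symmetric:binary} follows from dagger symmetry~\eqref{eq:symmetry:ternary} of $\tinymulttriple$.

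Mutual inverseness is a direct diagrammatic computation: the $2 \to 3 \to 2$ round-trip inserts $\tinyinvolution \circ \tinyunitdual = \tinyunit$ in the middle, which is then absorbed by unitality of $\tinymult$; the $3 \to 2 \to 3$ round-trip introduces $\tinyunit$ and $\tinyunitdual$ on the middle wire, which are absorbed by unitality~\eqref{eq:unital:ternary} and the Frobenius law~\eqref{eq:frobenius:ternary}.

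For the final three-way equivalence: under the bijection, $\tinymult \circ \tinymult^\dag = \id[A]$ (speciality) unpacks, via Frobenius and unitality, to the statement that both loops of $\tinymulttriple$ equal the identity (normality). Normality trivially implies both left and right idempotency, since identities are idempotents. The nontrivial direction is that either form of idempotency implies normality in the unital case. The plan here is to expand $l_A \circ l_A = l_A$ using the Frobenius law~\eqref{eq:frobenius:ternary} into the statement that a certain trace $R$ of $\tinymulttriple$ (essentially one of the loops) satisfies $\tinymulttriple(R(x), y, z) = \tinymulttriple(x, y, z)$ for all $x, y, z$; then specialising to $y = \tinyunitdual$ and $z = \tinyunit$ and applying unitality forces $R = \id$, and dagger symmetry~\eqref{eq:symmetry:ternary} then transports this to the other loop. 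This last step is the main obstacle: reducing the endomorphism equation $l_A \circ l_A = l_A$ on $A^* \otimes A$ to the sharper statement that the loops are identities requires deploying the Frobenius law, associativity, unitality, and dagger symmetry in the right order, and the combinatorial care needed to extract the desired conclusion from the weaker idempotent hypothesis (which in general is strictly weaker than normality without the unit) is the key subtlety of the proof.
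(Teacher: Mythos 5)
Your proposal follows essentially the same route as the paper: the same two formulas, the same axiom-by-axiom verification in each direction (with the paper spending its main diagrammatic effort on showing that the induced $\tinymulttriple$ is genuinely \emph{dagger} symmetric, the step you compress into one clause), and the same reduction of the final three-way equivalence to the loops of $\tinymulttriple$. On the last step you are in fact more explicit than the paper, which dismisses ``unital $+$ idempotent $\Rightarrow$ normal'' as easy; your expansion of $l_A \circ l_A = l_A$ via associativity into ``$\tinymulttriple$ precomposed with a loop on its first leg equals $\tinymulttriple$'', followed by evaluation of the remaining two legs at $\tinyunitdual$ and $\tinyunit$, is correct and kills one loop. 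The one slip is the claim that dagger symmetry then ``transports this to the other loop'': the symmetry~\eqref{eq:symmetry:ternary} (in relational form, $\Lambda(x,y,z,u)\iff\Lambda(u,z,y,x)\iff\Lambda(y,x,u,z)$) fixes each loop individually --- it shows each loop is self-adjoint --- but it does not interchange the two loops, so it cannot alone convert triviality of one into triviality of the other. You need the unit again at this point; the cleanest fix is the paper's observation that \emph{both} loops of $\tinymulttriple$ are equal to the composite $\tinymult\circ\tinymult^\dag$ of the associated 2-structure, so that either loop being the identity forces speciality, which in turn forces the other loop to be the identity as well.
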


\begin{proof}
  Associativity for $\tinymulttriple$ follows immediately from associativity for $\tinymult$. 
  Unitality of $\tinymulttriple$ follows from the fact that any symmetric Frobenius 2-structure $\tinymult$ satisfies $\tinyinvolutionswap \circ \tinyunitdual = \tinyunit$. Further, $\tinymulttriple$ indeed respects daggers:
  \[
    \begin{pic}[scale=.5]
      \node[dot] (d) at (0,0) {};
      \draw[arrow=.9] (d) to +(0,1);
      \draw[reverse arrow=.9] (d) to[out=180,in=90,looseness=.8] +(-.5,-1.3);
      \draw[reverse arrow=.95] (d) to[out=0,in=90] +(.5,-.3) to[out=-90,in=-90,looseness=2] +(.5,0) to +(0,1.3);
      \draw[arrow=.97] (d) to[out=-90,in=-90,looseness=2] (1.5,-.2) to (1.5,1);
    \end{pic}
    =
    \begin{pic}[scale=.5]
      \node[dot] (l) at (0,0) {};
      \node[dot] (r) at (.5,.5) {};
      \draw[arrow=.6] (l) to[out=90,in=180] (r);
      \draw[arrow=.9] (r) to +(0,.75);
      \draw[reverse arrow=.95] (r) to[out=0,in=90] +(.5,-.3) to[out=-90,in=-90,looseness=2] +(.5,0) to +(0,1.1);    
      \node[dot] (i) at (.5,-.5) {};
      \draw[arrow=.6] (i) to[out=90,in=0] (l);
      \draw[arrow=.1, arrow=.97] (i) to[out=-90,in=-90] +(1.5,-.2) to +(0,2);
      \draw[reverse arrow=.9] (l) to[out=180,in=90] +(-.7,-1.2);
    \end{pic}
    =
    \begin{pic}[scale=.5]
      \node[dot] (l) at (0,0) {};
      \node[dot] (r) at (.5,.5) {};
      \draw[arrow=.6] (l) to[out=90,in=180] (r);
      \draw[arrow=.9] (r) to +(0,.75);
      \draw[reverse arrow=.95] (r) to[out=0,in=90] +(.5,-.3) to[out=-90,in=-90,looseness=2] +(.5,0) to +(0,1.1);    
      \draw[reverse arrow=.9] (l) to[out=180,in=90] +(-.7,-1.2);
      \node[dot] (b) at (1,-.5) {};
      \draw[arrow=.5] (b) to[out=180,in=0] (l);
      \draw[reverse arrow=.35] (b) to +(0,-.6) node[dot]{};
      \draw[arrow=.97] (b) to[out=0,in=-90] +(1,.75) to +(0,1);
    \end{pic}
    =
    \begin{pic}[scale=.5]
      \node[dot] (r) at (.5,.5) {};
      \draw[arrow=.9] (r) to +(0,.75);
      \draw[reverse arrow=.95] (r) to[out=0,in=90] +(.5,-.3) to[out=-90,in=-90,looseness=2] +(.5,0) to +(0,1.1);        
      \node[dot] (b) at (.5,-.5) {};
      \draw[arrow=.5] (b) to[out=180,in=180] (r);
      \draw[reverse arrow=.9] (b) to +(0,-.7);
      \draw[arrow=.97] (b) to[out=0,in=-90] +(1.5,.7) to +(0,1.1);
    \end{pic}
    =
    \begin{pic}[scale=.5]
      \node[dot] (b) at (.5,0) {};
      \node[dot] (m) at (-.3,.5) {};
      \node[dot] (t) at (.5,1) {};
      \draw[reverse arrow=.9] (b) to +(0,-.7);
      \draw[arrow=.5] (b) to[out=180,in=-90] (m);
      \draw[arrow=.6] (m) to[out=0,in=180] (t);
      \draw[arrow=.9] (m) to[out=180,in=-90,looseness=.8] +(-.6,1.3);
      \draw[arrow=.5] (t) to +(0,.6) node[dot]{};
      \draw[reverse arrow=.95] (t) to[out=0,in=90] +(.5,-.3) to[out=-90,in=-90,looseness=2] +(.5,0) to +(0,1.1);            
      \draw[arrow=.97] (b) to[out=0,in=-90] +(1.5,.7) to +(0,1.1);
    \end{pic}
    =
    \begin{pic}[scale=.5]
      \node[dot] (b) at (0,0){};
      \node[dot] (t) at (-.5,.5){};
      \node[dot] (i) at (0,1.2) {};
      \draw[reverse arrow=.9] (b) to +(0,-.7);
      \draw[arrow=.7] (b) to[out=180,in=-90] (t);
      \draw[arrow=.7] (t) to[out=0,in=-90] (i);
      \draw[reverse arrow=.9] (i) to +(0,.6);
      \draw[arrow=.95] (t) to[out=180,in=-90] +(-.7,1.3);
      \draw[arrow=.95] (b) to[out=0,in=-90,looseness=.8] +(.7,1.8);
    \end{pic}
    =
    \begin{pic}[scale=.5]
      \node[dot] (d) at (0,0) {};
      \draw[reverse arrow=.9] (d) to +(0,-1);
      \draw[reverse arrow=.9] (d) to +(0,1.5);
      \draw[arrow=.95] (d) to[out=180,in=-90,looseness=.8] +(-.7,1.5);
      \draw[arrow=.95] (d) to[out=0,in=-90,looseness=.8] +(.7,1.5);
    \end{pic}
  \]
  This also shows that $\tinymulttriple$ is symmetric.

  In the other direction, associativity, unitality and the Frobenius law for $\tinymult$ all follow immediately from those for $\tinymulttriple$, and this Frobenius 2-structure is a dagger one by construction. For symmetry:
  \[
    \begin{pic}[scale=.5]
      \node[dot] (d) at (0,0) {};
      \draw[reverse arrow=.95] (d) to[out=0,in=90] +(.5,-.3) to[out=-90,in=-90,looseness=2] +(.5,0) to +(0,1.5);
      \draw[reverse arrow=.9] (d) to[out=180,in=90] +(-.7,-1);
      \draw[arrow=.5] (d) to +(0,.7) node[dot]{};
    \end{pic}
    =
    \begin{pic}[scale=.5]
      \node[dot] (d) at (0,0) {};
      \draw[reverse arrow=.95] (d) to[out=0,in=90] +(.5,-.3) to[out=-90,in=-90,looseness=2] +(.5,0) to +(0,1.5);
      \draw[reverse arrow=.9] (d) to[out=180,in=90] +(-.7,-1);
      \draw[arrow=.5] (d) to +(0,.7) node[dot]{};
      \draw[arrow=.5] (d) to +(0,-.7) node[dot]{};
    \end{pic}
    =
    \begin{pic}[scale=.5]
      \node[dot] (d) at (0,0) {};
      \draw[reverse arrow=.95] (d) to[out=0,in=90] +(.5,-.3) to[out=-90,in=-90,looseness=2] +(.5,0) to +(0,1.5);
      \draw[reverse arrow=.9] (d) to[out=180,in=90] +(-.7,-1);
      \draw[arrow=.5] (d) to +(0,.7) node[dot]{};
      \draw[arrow=.1,arrow=.9] (d) to[out=-90,in=-90,looseness=1.5] +(1.5,-.2) to +(0,.9) node[dot]{};
    \end{pic}
    =
    \begin{pic}[xscale=-.5,yscale=.5]
      \node[dot] (d) at (0,0) {};
      \draw[reverse arrow=.95] (d) to[out=0,in=90] +(.5,-.3) to[out=-90,in=-90,looseness=2] +(.5,0) to +(0,1.5);
      \draw[reverse arrow=.9] (d.east) to[out=180,in=90] +(-.5,-1);
      \draw[arrow=.5] (d) to +(0,.7) node[dot]{};
      \draw[arrow=.1,arrow=.9] (d) to[out=-90,in=-90,looseness=1.5] +(1.5,-.2) to +(0,.9) node[dot]{};
    \end{pic}
    =
    \begin{pic}[xscale=-.5,yscale=.5]
      \node[dot] (d) at (0,0) {};
      \draw[reverse arrow=.95] (d) to[out=0,in=90] +(.5,-.3) to[out=-90,in=-90,looseness=2] +(.5,0) to +(0,1.5);
      \draw[reverse arrow=.9] (d) to[out=180,in=90] +(-.7,-1);
      \draw[arrow=.5] (d) to +(0,.7) node[dot]{};
    \end{pic}  
  \]
  We now show these constructions are inverse. Starting from a 2-structure $\tinymult$ returns to the same 2-structure using $\tinyinvolutionswap \circ \tinyunitdual = \tinyunit$. Conversely, starting from any unital 3-structure $\tinymulttriple$, Corollary~\ref{cor:normalform_consequences} gives:
  \[
    \begin{pic}[scale=.5]
      \node[dot] (t) at (0,1) {};
      \node[dot] (m) at (-.6,.5) {};
      \node[dot] (b) at (0,0) {};
      \draw[arrow=.5] (t) to +(0,1.1);
      \draw[arrow=.5] (m) to[out=90,in=180] (t);
      \draw[arrow=.5] (b) to[out=90,in=0] (m);
      \draw[arrow=.9] (b) to +(0,-.7);
      \draw[reverse arrow=.9] (m) to[out=180,in=90,looseness=.8] +(-.7,-1.2);
      \draw[reverse arrow=.9] (t) to[out=0,in=90,looseness=.8] +(.7,-1.7);
    \end{pic}
    =
    \begin{pic}[scale=.5]
      \node[dot] (t) at (1,1.2) {};
      \node[dot] (m) at (-.5,.5) {};
      \draw[arrow=.5] (t) to +(0,.7);
      \draw[arrow=.7] (m) to[out=90,in=180,looseness=.7] (t);
      \draw[reverse arrow=.9] (m) to[out=180,in=90,looseness=.8] +(-.7,-1.5);
      \draw[reverse arrow=.9] (t) to[out=0,in=90,looseness=.7] +(.7,-2.2);
      \draw[arrow=.5] (t) to +(0,-.7) node[dot] {};
      \draw[arrow=.5] (m) to +(0,-.7) node[dot] {};
      \node[dot] (b) at (.5,-.2) {};
      \draw[arrow=.5] (b) to[out=180,in=0] (m);
      \draw[arrow=.5] (b) to +(0,.7) node[dot]{};
      \draw[arrow=.5] (b) to +(0,-.6) node[dot]{};
      \draw[arrow=.9] (b) to[out=0,in=-90] +(.5,.2) to[out=90,in=90,looseness=2] +(.3,0) to +(0,-1);
    \end{pic}
    =
    \begin{pic}[scale=.5]
      \node[dot] (l) at (0,0) {};
      \node[dot] (m) at (1,.5) {};
      \node[dot] (r) at (2,1) {};
      \draw[arrow=.7] (l) to[out=90,in=180,looseness=.7] (m);
      \draw[arrow=.7] (m) to[out=90,in=180,looseness=.7] (r);
      \draw[arrow=.9] (r) to +(0,.7);
      \draw[arrow=.5] (l) to +(0,-.7) node[dot]{};
      \draw[reverse arrow=.9] (l) to[out=180,in=90] +(-.7,-1);
      \draw[reverse arrow=.6] (l) to[out=0,in=90] +(.5,-.7) node[dot]{};
      \draw[arrow=.9] (m) to +(0,-1.5);
      \draw[reverse arrow=.7] (m) to[out=0,in=90,looseness=.7] +(.5,-1.2) node[dot]{};
      \draw[arrow=.75] (r) to +(0,-1.7) node[dot] {};
      \draw[reverse arrow=.9] (r) to[out=0,in=90,looseness=.7] +(.7,-2);
    \end{pic}
    =
    \begin{pic}[scale=.5]
      \node[dot] (d) at (0,0) {};
      \draw[arrow=.9] (d) to +(0,1.3);
      \draw[arrow=.9] (d) to +(0,-1.3);
      \draw[reverse arrow=.9] (d) to[out=180,in=90] +(-.7,-1.3);
      \draw[reverse arrow=.9] (d) to[out=0,in=90] +(.7,-1.3);
    \end{pic}
  \]

  For the final statement, note that the left loop for $\tinymulttriple$ may be described in terms of $\tinymult$ as:
  \[ 
    \begin{pic}[scale=.5]
      \node[dot] (d) at (0,0) {};
      \draw[arrow=.9] (d) to +(0,.6);
      \draw[reverse arrow=.9] (d) to[out=0,in=90,looseness=.7] +(.7,-2);
      \draw[arrow=.1, arrow=.9] (d) to[out=-90,in=0] ([xshift=-4mm,yshift=-10mm]d) to[out=180,in=180] (d);
    \end{pic}
    =
    \begin{pic}[scale=.5]
      \node[dot] (d) at (0,0) {};
      \draw[arrow=.9] (d) to +(0,.6);
      \draw[reverse arrow=.9] (d) to[out=0,in=90,looseness=.7] +(.7,-2);
      \node[dot] (l) at (-.5,-.5) {};
      \node[dot] (i) at (0,-1) {};
      \draw[arrow=.7] (l) to[out=90,in=180] (d);
      \draw[arrow=.7] (i) to[out=90,in=0] (l);
      \draw[arrow=.1, arrow=.8] (i) to[out=-90,in=-90,looseness=1.5] +(-1,0) to[out=90,in=180] (l);
    \end{pic}
    =
    \begin{pic}[scale=.5]
      \node[dot] (d) at (0,0) {};
      \draw[arrow=.9] (d) to +(0,.6);
      \draw[reverse arrow=.9] (d) to[out=0,in=90,looseness=.7] +(.7,-2);
      \node[dot] (l) at (-.5,-.5) {};
      \node[dot] (b) at (-.5,-1.3) {};
      \draw[arrow=.7] (l) to[out=90,in=180] (d);
      \draw[reverse arrow=.3] (b) to +(0,-.6) node[dot]{};
      \draw[arrow=.6] (b) to[out=0,in=0] (l);
      \draw[arrow=.6] (b) to[out=180,in=180] (l);
    \end{pic}
    =
    \begin{pic}[scale=.5]
      \node[dot] (d) at (0,0) {};
      \draw[arrow=.9] (d) to +(0,.6);
      \node[dot] (l) at (.7,-.7) {};
      \node[dot] (b) at (0,-1.3) {};
      \draw[reverse arrow=.9] (l) to[out=0,in=90,looseness=.7] +(.7,-1.3);
      \draw[arrow=.7] (l) to[out=90,in=0] (d);
      \draw[reverse arrow=.3] (b) to +(0,-.6) node[dot]{};
      \draw[arrow=.5] (b) to[out=0,in=180] (l);
      \draw[arrow=.6] (b) to[out=180,in=180] (d);
    \end{pic}
    =
    \begin{pic}[scale=.5]
      \node[dot] (t) at (0,.9) {};
      \node[dot] (m) at (0,.25) {};
      \node[dot] (b) at (0,-.4) {};
      \draw[arrow=.9] (t) to +(0,.5);
      \draw[arrow=.7] (b) to (m);
      \draw[arrow=.6] (m) to[out=0,in=0] (t);
      \draw[arrow=.6] (m) to[out=180,in=180] (t);
      \draw[reverse arrow=.9] (b) to[out=0,in=90] +(.5,-.8);
      \draw[reverse arrow=.5] (b) to[out=180,in=90] +(-.5,-.5) node[dot]{};
    \end{pic}
    =    
    \begin{pic}[scale=.5]
      \node[dot] (t) at (0,.9) {};
      \node[dot] (m) at (0,.25) {};
      \draw[arrow=.9] (t) to +(0,1);
      \draw[arrow=.6] (m) to[out=0,in=0] (t);
      \draw[arrow=.6] (m) to[out=180,in=180] (t);
      \draw[reverse arrow=.9] (m) to +(0,-1);
    \end{pic}
  \]
  The same holds for the right loop, similarly. Hence $\tinymult$ is special if and only if $\tinymulttriple$ is normal. 
  In the presence of a unit, it is easy to see that this is equivalent to left or right idempotence of $\tinymulttriple$.
\end{proof}

\begin{remark}
 This result generalises the relationship between groupoids and connectors to arbitrary 2- and 3-structures, via the correspondences of Theorems~\ref{thm:frobeniusstructuresaregroupoids} and~\ref{thm:frobenius3structuresareconnectors}. Firstly, note that a connector $(A, R, S, p)$ defines a groupoid (uniquely) if and only if $A_1 := A$ may be given a reflexive graph structure $d, c \colon A_1 \to A_0$ (with common splitting $e \colon A_0 \to A_1$), compatibly with the equivalence relations $R, S$: this means that $R$ is the kernel pair of $d$ and $S$ the kernel pair of $c$.
This situation ensures that the connector operation $p(a,b,c)$ induces a composition $a \circ b$ of a groupoid on this reflexive graph, by defining the composite $a \circ b = p(a, 1_A, b)$ for any ``composable pair of morphisms'' $a \colon A \to C$ and $b \colon B \to A$ (see Theorem $3.6$  in\cite{CPP} for more details).

By Theorem~\ref{thm:2struct_equal_unit3struc} this holds if and only if the 3-structure $\tinymulttriple$ corresponding to $(A, R, S, p)$ has a unit. In Theorem~\ref{thm:frobeniusstructuresaregroupoids} we also saw that this unit corresponds to the object $A_0$ of the groupoid. 
\end{remark}

We can make the construction functorial as follows. Let $\catD$ be a monoidal dagger category.
A \emph{morphism} of symmetric dagger Frobenius 2-structures $f \colon (A, \tinymult) \to (B,\tinymult[whitedot])$ is a morphism $f \colon A \to B$ preserving multiplication and involution, in that $f \circ \tinymult = \tinymult[whitedot] \circ (f \otimes f)$ and $\tinyinvolution[whitedot] \circ f = f_* \circ \tinyinvolution$. 
A \emph{morphism} of 3-structures $g \colon (A, \tinymulttriple) \to (B,\tinymulttriple[whitedot])$ is a morphism $g \colon A \to B$ satisfying $g \circ \tinymulttriple = \tinymulttriple[whitedot] \circ (g \otimes g_* \otimes g)$.  A morphism $f$ of symmetric dagger 2-structures or unital 3-structures is \emph{unital} when it satisfies ${f \circ \ \tinyunit} = \tinyunit[whitedot]$. 
We write $\BinaryCat(\cat{D})$ and $\TernCat(\cat{D})$ for the categories of symmetric dagger Frobenius 2-structures and Frobenius 3-structures in $\catD$ and their morphisms. Note that Example~\ref{ex:product3} makes $\TernCat(\catD)$ symmetric monoidal when $\catD$ is symmetric monoidal, and combining Examples~\ref{ex:dual3} and~\ref{ex:opposite3} then provides dual objects for every object in $\TernCat(\catD)$. 

It is now easy to see that the construction $U(\tinymult) = \tinymulttriple$ of Theorem~\ref{thm:2struct_equal_unit3struc} defines a functor $U \colon \BinaryCat(\catD) \to \TernCat(\catD)$, since any morphism of 2-structures preserves multiplication and involution, and hence ternary multiplication also. We will return to this shortly in Remark~\ref{rem:functoriality}.




\subsection{Splitting}

For any left idempotent dagger Frobenius 3-structure $(A, \tinymulttriple)$ the morphism $l_A = \tinylidem \colon A^* \otimes A \to A^* \otimes A$ is \emph{dagger idempotent}, \ie~satisfies $l_A = {l_A}^{\dag} \circ {l_A}$. We say such an idempotent $p \colon X \to X$ in a dagger category has a \emph{dagger splitting} when there exists some $i \colon Y \to X$ with $p = i \circ i^{\dag}$ and $i^{\dag} \circ i = \id[Y]$. Such a morphism $i$ is called an \emph{isometry}.

\begin{example} \label{ex:dagger_idem_RelC}
Dagger idempotents $P \colon A \relto A$ dagger split in $\Rel(\catC)$ iff $\catC$ is \emph{(Barr) exact}. Explicitly, dagger idempotence says that $P$ is a symmetric, transitive relation on $A$. Any such $P$ restricts isometrically to an equivalence relation on $\{a \mid P(a,a)\}$, and so it suffices to show that equivalence relations dagger split. But exactness is equivalent to requiring that equivalence relations $P$ split in $\Rel(\catC)$ \cite{succi1975teoria}, and in this case there is a dagger splitting $P = e^{\oprel} \circ e$ where $e$ is the coequaliser of $p_1$, $p_2 \colon P \to A$. 
\end{example}


Splittings give another way to turn Frobenius 3-structures into 2-structures.

\begin{theorem}[Splitting Construction] \label{thm:splitting}
  Let $(A, \tinymulttriple)$ be a left idempotent dagger Frobenius 3-structure for which $l_A$ dagger splits over an isometry $i \colon L \to A^* \otimes A$. Then $L$ is a symmetric dagger Frobenius 2-structure with:
  \begin{equation} \label{eq:split_construction}
    \begin{pic}[scale=.5]
      \node[whitedot] (d) at (0,0) {};
      \draw[arrow=.9] (d) to +(0,2);
      \draw[reverse arrow=.9] (d) to[out=180,in=90] +(-.9,-2);
      \draw[reverse arrow=.9] (d) to[out=0,in=90] +(.9,-2);
    \end{pic}
    =
    \begin{pic}[scale=.5]
      \node[morphism] (l) at (0,0) {$i$};
      \node[morphism] (r) at (2,0) {$i$};
      \node[morphism,hflip] (t) at (1,2) {$i$};
      \draw[reverse arrow=.9] (l.south) to +(0,-.7);
      \draw[reverse arrow=.9] (r.south) to +(0,-.7);
      \draw[arrow=.9] (t.north) to +(0,.7);
      \draw[reverse arrow=.5] ([xshift=-2mm]l.north west) to[out=90,in=-90] ([xshift=-2mm]t.south west);
      \draw[arrow=.5] ([xshift=2mm]r.north east) to[out=90,in=-90] ([xshift=2mm]t.south east);
      \draw[arrow=.5] ([xshift=2mm]l.north east) to[out=90,in=90] ([xshift=-2mm]r.north west);
    \end{pic}
    \qquad\qquad
    \begin{pic}[scale=.5]
      \draw[reverse arrow=.1] (0,0) to +(0,-1) node[whitedot]{};
    \end{pic}
    =
    \begin{pic}[scale=.5]
      \node[morphism, hflip] (i) at (0,0) {$i$};
      \draw[arrow=.5] ([xshift=-2mm]i.south west) to[out=-90,in=-90,looseness=3] ([xshift=2mm]i.south east);
      \draw[arrow=.9] (i.north) to +(0,1);
    \end{pic}
  \end{equation}
  It is special precisely when $\tinymulttriple$ is additionally right idempotent.
\end{theorem}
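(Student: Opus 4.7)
The strategy is to view the defining formulae~\eqref{eq:split_construction} as the dagger Frobenius 2-structure induced on $L$ from the pair-of-pants structure on $A^* \otimes A$ (Example~\ref{ex:pants:binary}) via the splitting of the idempotent $l_A = i \circ i^\dag$. Writing $\mu$ and $\eta$ for the pants multiplication and unit, the defining equations amount to $\tinymult[whitedot] = i^\dag \circ \mu \circ (i \otimes i)$ and $\tinyunit[whitedot] = i^\dag \circ \eta$, with comultiplication and counit given by the daggers.

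The central technical step is to show that $l_A$ is a \emph{Frobenius idempotent} for the pants algebra, meaning it satisfies $l_A \circ \mu = \mu \circ (l_A \otimes \id) = \mu \circ (\id \otimes l_A)$ together with the dagger-analogous equations for $\mu^\dag$. This is where the substantive work lies; I would establish it graphically by expanding $l_A$ as the left loop of $\tinymulttriple$ and then applying associativity~\eqref{eq:associativity:ternary} and the coassociative Frobenius-law form from Lemma~\ref{lem:coassociative:ternary} to transport the loop across the pants contraction. Dagger symmetry of $\tinymulttriple$ supplies the dual identities for $\mu^\dag$, and left idempotence $l_A \circ l_A = l_A$ collapses any accumulated loops.

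Granted the Frobenius idempotent property, the dagger Frobenius 2-structure axioms on $L$ follow essentially formally from those on the pants algebra, using $i^\dag \circ i = \id_L$: every internal occurrence of $i \circ i^\dag = l_A$ that arises in a composite of $\tinymult[whitedot]$ or its dagger is absorbed into an adjacent $\mu$ or $\mu^\dag$. Associativity and unitality of $\tinymult[whitedot]$ reduce to those of $\mu$, and the Frobenius and dagger axioms transfer similarly. Symmetry of the 2-structure is read off directly from~\eqref{eq:split_construction} combined with dagger symmetry of $\tinymulttriple$.

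For the final statement, specialness $\tinymult[whitedot] \circ (\tinymult[whitedot])^\dag = \id_L$ unfolds, using $(i \otimes i)(i^\dag \otimes i^\dag) = l_A \otimes l_A$, to $i^\dag \circ \mu \circ (l_A \otimes l_A) \circ \mu^\dag \circ i = \id_L$. Using the Frobenius law of pants together with associativity and dagger symmetry of $\tinymulttriple$, the inner composite $\mu \circ (l_A \otimes l_A) \circ \mu^\dag$ transforms into $l_A \circ r_A$, so that after conjugation by $i$ and $i^\dag$ the specialness condition reduces to $r_A$ restricting to the identity on the image of $l_A$. Given left idempotence, this is equivalent to $r_A \circ r_A = r_A$, that is, to right idempotence of $\tinymulttriple$, completing the theorem.
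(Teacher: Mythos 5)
Your overall architecture---transporting the pair-of-pants structure on $A^* \otimes A$ (Example~\ref{ex:pants:binary}) along the splitting $l_A = i \circ i^\dag$, and reducing the 2-structure axioms to an absorption property of $l_A$ relative to the pants multiplication $\mu$---is exactly the paper's. But your central technical lemma is false as stated: a left idempotent dagger Frobenius 3-structure need not satisfy $l_A \circ \mu = \mu \circ (l_A \otimes \id) = \mu \circ (\id \otimes l_A)$. For a counterexample, take the connector $p(x,y,z)=x-y+z$ on a nontrivial abelian group $A$ in $\cat{Set}$, viewed via Theorem~\ref{thm:frobenius3structuresareconnectors} as a normal (hence left idempotent) dagger Frobenius 3-structure in $\Rel(\cat{Set})$. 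Then $l_A$ relates $(y,z)$ to $(x,u)$ iff $u-x=z-y$, and one checks that $\mu \circ (l_A \otimes \id)$ is a total relation on inputs $((y,z),(y',z'))$ (with output $(y'+y-z,\,z')$), whereas $l_A \circ \mu$ is empty unless $z=y'$; so the two sides differ. The identity the paper actually proves (equation~\eqref{eq:frob_idem_useful}, via associativity~\eqref{eq:associativity:ternary} and the ternary Frobenius law~\eqref{eq:frobenius:ternary}) is the \emph{relative} absorption law
$$\mu \circ (l_A \otimes l_A) \;=\; l_A \circ \mu \circ (l_A \otimes l_A) \;=\; l_A \circ \mu \circ (\id \otimes l_A) \;=\; l_A \circ \mu \circ (l_A \otimes \id),$$
in which an $l_A$ may only be inserted or removed when the remaining legs already carry $l_A$'s. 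This weaker law does suffice, because in every composite built from $\tinymult[whitedot] = i^\dag \circ \mu \circ (i \otimes i)$ and its dagger each input of each $\mu$ is automatically pre-composed with $i = l_A \circ i$; but establishing it is where the substance of the proof lies, and your proposal bypasses that work by asserting the stronger, false version.

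A secondary problem is the speciality argument: the composite $l_A \circ r_A$ does not typecheck, since $r_A$ is an endomorphism of $A \otimes A^*$ while $l_A$ is one of $A^* \otimes A$. The correct reduction (as in the paper) is that, after conjugating by $i$ and $i^\dag$, speciality becomes the statement that the partial trace of $l_A \otimes l_A$ over the two middle wires equals $l_A$, which upon bending legs is precisely $r_A \circ r_A = r_A$. Your final conclusion---speciality of $\tinymult[whitedot]$ iff right idempotence of $\tinymulttriple$---is correct, but the route to it needs to be repaired along these lines.
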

\begin{proof}
  Note that $l = l_A$ is a dagger idempotent and also satisfies: 
  \begin{equation} \label{eq:frob_idem_useful}
    \begin{pic}[scale=.5]
      \node[morphism] (t) at (0,2) {$l$};
      \node[morphism] (b) at (1,0) {$l$};
      \draw[reverse arrow=.9] ([xshift=-2mm]t.north west) to +(0,.8);
      \draw[arrow=.9] ([xshift=2mm]t.north east) to +(0,.8);
      \draw[reverse arrow=.9] ([xshift=2mm]b.south east) to +(0,-.8);
      \draw[arrow=.9] ([xshift=-2mm]b.south west) to +(0,-.8);
      \draw[arrow=.9] ([xshift=-2mm]t.south west) to +(0,-2.8);
      \draw[arrow=.5] ([xshift=2mm]b.north east) to[out=90,in=-90] ([xshift=2mm]t.south east);
      \draw[reverse arrow=.9] ([xshift=-2mm]b.north west) to[out=90,in=90,looseness=2] +(-.5,0) to +(0,-1.6);
    \end{pic}
    =
    \begin{pic}[scale=.5]
      \node[dot] (r) at (1.5,0) {};
      \node[dot] (l) at (0,1) {};
      \draw[arrow=.9] (r) to +(0,-1);
      \draw[reverse arrow=.9] (r) to[out=0,in=90] +(.8,-1);
      \draw[arrow=.5] (r) to[out=90,in=0] (l);
      \draw[arrow=.9] (l) to +(0,-2);
      \draw[arrow=.9] (l) to +(0,2.2);
      \draw[reverse arrow=.9] (l) to[out=180,in=90] +(-.5,-.3) to[out=-90,in=-90,looseness=2] +(-.5,0) to +(0,2.5);
      \draw[reverse arrow=.9] (r) to[out=180,in=90] +(-.8,-1);
    \end{pic}
    =
    \begin{pic}[scale=.5]
      \node[dot] (r) at (1.5,1) {};
      \node[dot] (l) at (0,0) {};
      \draw[arrow=.9] (r) to +(0,2.2);
      \draw[arrow=.9] (r) to +(0,-2);
      \draw[reverse arrow=.9] (r) to[out=0,in=90] +(.8,-2);
      \draw[arrow=.5] (l) to[out=90,in=180] (r);
      \draw[reverse arrow=.9] (l) to[out=0,in=90] +(.8,-1);
      \draw[arrow=.9] (l) to +(0,-1);
      \draw[reverse arrow=.9] (l) to[out=180,in=90] +(-.5,-.3) to[out=-90,in=-90,looseness=2] +(-.5,0) to +(0,3.5);
    \end{pic}
    =
    \begin{pic}[scale=.5]
      \node[morphism] (l) at (0,0) {$l$};
      \node[morphism] (r) at (2,0) {$l$};
      \draw[arrow=.9] ([xshift=-2mm]l.south west) to +(0,-.7);
      \draw[arrow=.9] ([xshift=-2mm]r.south west) to +(0,-.7);
      \draw[reverse arrow=.9] ([xshift=2mm]l.south east) to +(0,-.7);
      \draw[reverse arrow=.9] ([xshift=2mm]r.south east) to +(0,-.7);
      \draw[reverse arrow=.9] ([xshift=-2mm]l.north west) to[out=90,in=-90] +(1,1.5) to +(0,1.2);
      \draw[arrow=.9] ([xshift=2mm]r.north east) to[out=90,in=-90] +(-1,1.5) to +(0,1.2);
      \draw[arrow=.5] ([xshift=2mm]l.north east) to[out=90,in=90] ([xshift=-2mm]r.north west);
    \end{pic}    
    =
    \begin{pic}[scale=.5]
      \node[morphism] (l) at (0,0) {$l$};
      \node[morphism] (r) at (2,0) {$l$};
      \node[morphism] (t) at (1,2) {$l$};
      \draw[arrow=.9] ([xshift=-2mm]l.south west) to +(0,-.7);
      \draw[arrow=.9] ([xshift=-2mm]r.south west) to +(0,-.7);
      \draw[reverse arrow=.9] ([xshift=2mm]l.south east) to +(0,-.7);
      \draw[reverse arrow=.9] ([xshift=2mm]r.south east) to +(0,-.7);
      \draw[reverse arrow=.9] ([xshift=-2mm]t.north west) to +(0,.7);
      \draw[arrow=.9] ([xshift=2mm]t.north east) to +(0,.7);
      \draw[reverse arrow=.5] ([xshift=-2mm]l.north west) to[out=90,in=-90] ([xshift=-2mm]t.south west);
      \draw[arrow=.5] ([xshift=2mm]r.north east) to[out=90,in=-90] ([xshift=2mm]t.south east);
      \draw[arrow=.5] ([xshift=2mm]l.north east) to[out=90,in=90] ([xshift=-2mm]r.north west);
    \end{pic}
  \end{equation}
  The similar equation with $l$ in the lower left holds dually. 
  The Frobenius 2-structure laws then follow, using that $i$ is an isometry. For example, to show unitality:
  \[
    \begin{pic}[scale=.5]
      \node[morphism,hflip] (l) at (0,0) {$i$};
      \node[morphism,hflip] (r) at (2,0) {$i$};
      \node[morphism] (t) at (1,2) {$i$};
      \node[whitedot] (d) at (1,1) {};
      \draw[arrow=.5] (l.north) to[out=90,in=180] (d);
      \draw[arrow=.5] (r.north) to[out=90,in=0] (d);
      \draw[arrow=.5] (d) to (t.south);
      \draw[arrow=.9] ([xshift=-2mm]l.south west) to +(0,-.7);
      \draw[reverse arrow=.9] ([xshift=2mm]l.south east) to +(0,-.7);
      \draw[reverse arrow=.9] ([xshift=-2mm]t.north west) to +(0,.7);
      \draw[arrow=.9] ([xshift=2mm]t.north east) to +(0,.7);
      \draw[arrow=.5] ([xshift=-2mm]r.south west) to[out=-90,in=-90,looseness=3] ([xshift=2mm]r.south east);
    \end{pic}
    =
    \begin{pic}[scale=.5]
      \node[morphism] (l) at (0,0) {$l$};
      \node[morphism] (r) at (2,0) {$l$};
      \node[morphism] (t) at (1,2) {$l$};
      \draw[arrow=.9] ([xshift=-2mm]l.south west) to +(0,-.7);
      \draw[reverse arrow=.9] ([xshift=2mm]l.south east) to +(0,-.7);
      \draw[reverse arrow=.9] ([xshift=-2mm]t.north west) to +(0,.7);
      \draw[arrow=.9] ([xshift=2mm]t.north east) to +(0,.7);
      \draw[reverse arrow=.5] ([xshift=-2mm]l.north west) to[out=90,in=-90] ([xshift=-2mm]t.south west);
      \draw[arrow=.5] ([xshift=2mm]r.north east) to[out=90,in=-90] ([xshift=2mm]t.south east);
      \draw[arrow=.5] ([xshift=2mm]l.north east) to[out=90,in=90] ([xshift=-2mm]r.north west);
      \draw[arrow=.5] ([xshift=-2mm]r.south west) to[out=-90,in=-90,looseness=3] ([xshift=2mm]r.south east);
    \end{pic}
    =
    \begin{pic}[scale=.5]
      \node[morphism] (l) at (0,0) {$l$};
      \node[morphism] (t) at (1,2) {$l$};
      \draw[arrow=.9] ([xshift=-2mm]l.south west) to +(0,-.7);
      \draw[reverse arrow=.9] ([xshift=2mm]l.south east) to +(0,-.7);
      \draw[reverse arrow=.9] ([xshift=-2mm]t.north west) to +(0,.7);
      \draw[arrow=.9] ([xshift=2mm]t.north east) to +(0,.7);
      \draw[reverse arrow=.5] ([xshift=-2mm]l.north west) to[out=90,in=-90] ([xshift=-2mm]t.south west);
      \draw[arrow=.5] ([xshift=2mm]l.north east) to[out=90,in=90,looseness=2] +(.5,0) to[out=-90,in=-90,looseness=2] +(1,0) to[out=90,in=-90] ([xshift=2mm]t.south east);
    \end{pic}
    =
    \begin{pic}[scale=.5]
      \node[morphism] (l) at (0,0) {$l$};
      \draw[reverse arrow=.9] ([xshift=2mm]l.south east) to +(0,-1.7);
      \draw[arrow=.9] ([xshift=-2mm]l.south west) to +(0,-1.7);
      \draw[arrow=.9] ([xshift=2mm]l.north east) to +(0,1.7);
      \draw[reverse arrow=.9] ([xshift=-2mm]l.north west) to +(0,1.7);
    \end{pic}
    =
    \begin{pic}[scale=.5]
      \node[morphism,hflip] (l) at (0,0) {$i$};
      \node[morphism] (t) at (0,1.4) {$i$};
      \draw[reverse arrow=.9] ([xshift=2mm]l.south east) to +(0,-1);
      \draw[arrow=.9] ([xshift=-2mm]l.south west) to +(0,-1);
      \draw[arrow=.9] ([xshift=2mm]t.north east) to +(0,1);
      \draw[reverse arrow=.9] ([xshift=-2mm]t.north west) to +(0,1);
      \draw[arrow=.5] (l.north) to (t.south);
    \end{pic}
  \]
  For symmetry:
  \[
    \begin{pic}[scale=.5]
      \node[whitedot] (d) at (0,0) {};
      \draw[reverse arrow=.9] (d) to[out=180,in=90] (-1,-2);
      \draw[arrow=.5] (d) to +(0,1) node[whitedot]{};
      \draw[reverse arrow=.9] (d) to[out=0,in=90] (.5,-.5) to[out=-90,in=-90,looseness=2] +(.5,0) to +(0,3.7);
    \end{pic}
    =
    \begin{pic}[scale=.5]
      \node[morphism] (l) at (0,0) {$i$};
      \node[morphism] (r) at (2,0) {$i$};
      \node[morphism,hflip] (t) at (1,2) {$i$};
      \node[morphism] (T) at (1,3.3) {$i$};
      \draw[reverse arrow=.9] (l.south) to +(0,-.7);
      \draw[arrow=.5] (t.north) to (T.south);
      \draw[arrow=.5] ([xshift=2mm]T.north east) to[out=90,in=90,looseness=2] ([xshift=-2mm]T.north west);
      \draw[reverse arrow=.5] ([xshift=-2mm]l.north west) to[out=90,in=-90] ([xshift=-2mm]t.south west);
      \draw[arrow=.5] ([xshift=2mm]r.north east) to[out=90,in=-90] ([xshift=2mm]t.south east);
      \draw[arrow=.5] ([xshift=2mm]l.north east) to[out=90,in=90] ([xshift=-2mm]r.north west);
      \draw[reverse arrow=.9] (r.south) to[out=-90,in=-90,looseness=1.5] +(1,0) to +(0,4.5);
    \end{pic}
    =
    \begin{pic}[scale=.5]
      \node[morphism] (l) at (0,0) {$i$};
      \node[morphism] (r) at (2,0) {$i$};
      \draw[reverse arrow=.9] (l.south) to +(0,-.7);
      \draw[reverse arrow=.5] ([xshift=-2mm]l.north west) to[out=90,in=90,looseness=1.5] ([xshift=2mm]r.north east);
      \draw[arrow=.5] ([xshift=2mm]l.north east) to[out=90,in=90] ([xshift=-2mm]r.north west);
      \draw[reverse arrow=.9] (r.south) to[out=-90,in=-90,looseness=1.5] +(1,0) to +(0,4.5);
    \end{pic}
    =
    \begin{pic}[scale=.5]
      \node[morphism] (l) at (0,0) {$i$};
      \node[morphism,hflip] (t) at (0,2) {$i_*$};
      \draw[reverse arrow=.9] (l.south) to +(0,-1);
      \draw[reverse arrow=.9] (t.north) to +(0,1.2);
      \draw[reverse arrow=.5] ([xshift=-2mm]l.north west) to ([xshift=-2mm]t.south west);
      \draw[arrow=.5] ([xshift=2mm]l.north east) to ([xshift=2mm]t.south east);
    \end{pic}
    =
    \begin{pic}[xscale=-.5,yscale=.5]
      \node[whitedot] (d) at (0,0) {};
      \draw[reverse arrow=.9] (d.east) to[out=180,in=90] (-1,-2);
      \draw[arrow=.5] (d) to +(0,1) node[whitedot]{};
      \draw[reverse arrow=.9] (d.west) to[out=0,in=90] (.5,-.5) to[out=-90,in=-90,looseness=2] +(.5,0) to +(0,3.7);
    \end{pic}  
  \]
  After composing with $i$ and $i^{\dag}$, speciality is equivalent to:
  \[
    \begin{pic}[scale=.5]
      \node[morphism] (l) at (0,0) {$l$};
      \node[morphism] (r) at (2,0) {$l$};
      \draw[arrow=.5] ([xshift=2mm]l.north east) to[out=90,in=90,looseness=1.5] ([xshift=-2mm]r.north west);
      \draw[arrow=.5] ([xshift=2mm]l.south east) to[out=-90,in=-90,looseness=1.5] ([xshift=-2mm]r.south west);    
      \draw[arrow=.9] ([xshift=-2mm]l.south west) to +(0,-1);
      \draw[reverse arrow=.9] ([xshift=-2mm]l.north west) to +(0,1);
      \draw[reverse arrow=.9] ([xshift=2mm]r.south east) to +(0,-1);
      \draw[arrow=.9] ([xshift=2mm]r.north east) to +(0,1);
    \end{pic}
    =
    \begin{pic}[scale=.5]
      \node[morphism] (l) at (0,0) {$l$};
      \draw[arrow=.9] ([xshift=-2mm]l.south west) to +(0,-1);
      \draw[reverse arrow=.9] ([xshift=-2mm]l.north west) to +(0,1);
      \draw[reverse arrow=.9] ([xshift=2mm]l.south east) to +(0,-1);
      \draw[arrow=.9] ([xshift=2mm]l.north east) to +(0,1);
    \end{pic}
  \]
  That is, speciality is equivalent to $\tinymult[whitedot]$ being right idempotent. 
\end{proof}

A dual construction holds for right idempotent 3-structures with suitable splittings.

\begin{example} \label{ex:splitting_doubleconstruction} 
Suppose $\catC$ is exact, and consider a connector $(A, R, S, p)$ viewed as a 3-structure in $\Rel(\catC)$, recalling the equivalence relations $l$, $r$ from Definition~\ref{def:double_eq}. The 2-structure~\eqref{eq:split_construction} is the groupoid in $\catC$ with object of morphisms being the object $A_l$ of $l$-equivalence classes, \ie~$[y,z]_l = [x,w]_l$ whenever $p(x,y,z) = w$. Composition is given by \[
[x,y]_l \circ [y,z]_l = [x,z]_l 
\] 
with identities $[x,x]_l$ for all $x \in A$. There is a groupoid defined in terms of $r$ dually. By Proposition~\ref{prop:double_eq_rel} the same construction holds for any double equivalence relation satisfying~\eqref{eq:associativity:ternary}. 
\end{example}

\begin{example} \label{example:special_2struc_rep}
  For any symmetric dagger special Frobenius 2-structure $(A, \tinymult)$, the unital 3-structure $\tinymulttriple$ constructed in Theorem~\ref{thm:2struct_equal_unit3struc} is left idempotent, with $l_A$ always having a splitting:
  \begin{equation} \label{eq:spec_2struc_splitting}
    \begin{pic}[scale=.5]
      \node[dot] (d) at (0,0) {};
      \draw[arrow=.9] (d) to +(0,1);
      \draw[reverse arrow=.9] (d) to[out=0,in=90] +(.8,-1);
      \draw[reverse arrow=.9] (d) to[out=180,in=90] +(-.5,-.3) to[out=-90,in=-90,looseness=2] +(-.5,0) to +(0,1.3);
    \end{pic}
  \end{equation}
  The 2-structure constructed via Theorem~\ref{thm:splitting} is then precisely $\tinymult$.
\end{example}

\begin{remark}[Representable Structures]
We call a dagger Frobenius 2-structure $(L, \tinymult[whitedot])$ \emph{representable} when it arises from the construction of Theorem~\ref{thm:splitting}. Equivalently, a 2-structure is representable when it comes with an isometry $i \colon L \to A^* \otimes A$, for some $A$, which is a morphism of 2-structures from $\tinymult[whitedot]$ into the canonical 2-structure on $A^* \otimes A$ from Example~\ref{ex:pants:binary}.

Indeed, in either case $\tinymulttriple$ is defined by bending one leg of $l = l_A = i \circ i^{\dag}$. Then preservation of the involution and multiplication by $i$ are equivalent to~\eqref{eq:frob_idem_useful} and that $l=l_*$, which are in turn equivalent to $\tinymulttriple$ being dagger symmetric and associative, while it is left idempotent by construction.


Example~\ref{example:special_2struc_rep} says that any symmetric dagger special 2-structure $\tinymult$ has a representation given by~\eqref{eq:spec_2struc_splitting}. Any symmetric, transitive relation $r \colon R \rightarrowtail A \times A$ in a regular category $\catC$ defines a 2-structure with representation $r$. 
\end{remark}

\begin{remark}\label{rem:functoriality}
  In some sense Theorems~\ref{thm:2struct_equal_unit3struc} and~\ref{thm:splitting} are converses to each other.
  Let the construction $\tinymulttriple \mapsto (L, \tinymult[whitedot])$ of Theorem~\ref{thm:splitting} act on morphisms $f$ as $i'^\dag \circ (f_* \otimes f) \circ i \colon L \to L'$.
  This defines a functor, to $\B$, from the full subcategory of $\T$ of left idempotent dagger Frobenius 3-structures whose left idempotent dagger splits. 
  Then it restricts to two subcategories (omitting the word `dagger' throughout):
  \begin{center}\begin{tabular}{lcl}
    & $L$ \\
    left and right idempotent 3-structures & $\to$ & special 2-structures \\
    unital normal 3-structures & $\to$ & special 2-structures, unital 
  \end{tabular}\end{center}
  In the second case, we mean that we take unital morphisms on both sides.
  Similarly, the functor $U \colon \B \to \T$ from Theorem~\ref{thm:2struct_equal_unit3struc} restricts to three subcategories:
  \begin{center}\begin{tabular}{lcl}
    & $U$ \\
    special 2-structures & $\to$ & normal 3-structures \\
    2-structures, unital  & $\to$ & unital 3-structures \\
    special 2-structures, unital & $\to$ & unital normal 3-structures
  \end{tabular}\end{center}
  In the latter two cases $U$ is an isomorphism of categories; to see this note that a unital morphism of 3-structures is also one of 2-structures.
  In the final case $U$ and $L$ also form an equivalence of categories; to see this use the isometric splitting of Example~\ref{example:special_2struc_rep}, and note that any other choice of splitting gives an isomorphic 2-structure.
\end{remark}

\subsection{Enveloping Structure}

In a dagger category, a \emph{dagger biproduct} of objects $A$, $B$ is a biproduct $(A \oplus B, \coproj_A, \coproj_B, \pi_A, \pi_B)$ whose structure maps satisfy $\pi_A = \coproj_A^{\dag}$, $\pi_B = \coproj_B^{\dag}$. In this section we assume that $\catD$ is a dagger monoidal, with dagger biproducts which are distributive, meaning that the canonical map $u \colon (A \otimes B) \oplus (A \otimes C) \to A \otimes (B \oplus C)$ is an isomorphism with $u^{-1} = u^{\dag}$. Such biproducts make $\catD$ enriched in commutative monoids, with the addition of morphisms satisfying $f^{\dag} + g^{\dag} = (f + g)^{\dag}$. 


\begin{example} \label{ex:coherent}
When $\catC$ is a regular category which is also \emph{coherent}, each lattice of subobjects $\text{Sub}(A)$ comes with unions. If $\catC$ is moreover \emph{positive}, it has coproducts which form distributive dagger biproducts in $\Rel(\catC)$. In this case, the addition in $\Rel(\catC)$ is given by the union of relations, and so is idempotent, \ie~satisfies $R + R = R$ for all $R$. 
\end{example}

A \emph{sub-3-structure} of a dagger Frobenius 2-structure $(B, \tinymult[whitedot])$ in $\catD$ consists of a dagger 3-structure $(A, \tinymulttriple)$ along with an isometry $i \colon A \to B$ which is a morphism of 3-structures $\tinymulttriple \to \tinymulttriple[whitedot]$. 


\begin{example}\label{ex:coset}
  Sub-3-structures $A$ of a group $(G,\tinymult[whitedot])$ in $\Rel(\cat{Set})$ correspond to cosets of subgroups $H \subseteq G$. On the one hand, if $H \subseteq G$ is a subgroup and $g \in G$, then the coset $A=gH=\{gh \mid h \in H\}$ is a sub-3-structure with operation $(a,b,c) \mapsto ab^{-1}c$.
  On the other hand, for any sub-3-structure $i$ of $G$, the range $A \subseteq G$ of $i$ is closed under this operation, making $H=A^{-1}A$ a subgroup of $G$ with $A=aH$ for any $a \in A$.
\end{example}

Write $\BinSub(\catD)$ for the category with objects sub-3-structures $(\tinymulttriple, i \colon A \to B,\tinymult[whitedot])$ that moreover are normal and satisfy $\tinymult[whitedot] \circ (i \otimes i) = 0$. Morphisms $f$ are those of 2-structures which further satisfy $f \circ i = i' \circ i'^{\dag} \circ f \circ i$. There is an evident functor $S \colon \BinSub(\catD) \to \TernCatNorm(\catD)$ which picks out the sub-3-structure, acting on morphisms by $f \mapsto i'^\dag \circ f \circ i$.

\begin{theorem}[Enveloping Structure] \label{thm:Envelope_struc_and_UP} 
  Let a monoidal dagger category $\catD$ have distributive dagger biproducts and dagger splittings for all dagger idempotents. Then the functor $S \colon \BinSub(\catD) \to \TernCatNorm(\catD)$ has a left adjoint $\tinymulttriple \mapsto E(\tinymulttriple)$, with the unit of the adjunction being the identity. Moreover, when addition in $\catD$ is idempotent, $E(\tinymulttriple)$ is always special.
\end{theorem}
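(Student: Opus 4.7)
Normality of $\tinymulttriple$ implies sliding, since when the left and right loops are identities both sides of each sliding equation collapse to the same morphism. Hence the spider rules of Lemma~\ref{lem:spiderrules} apply, and from them one verifies that $l_A$ on $A^* \otimes A$ and $r_A$ on $A \otimes A^*$ are dagger idempotents. By the hypothesis on $\catD$ they admit isometric dagger splittings via morphisms $i_R \colon R \to A^* \otimes A$ and $i_L \colon L \to A \otimes A^*$. I then define
\[
  E(A) \;:=\; L \,\oplus\, A^* \,\oplus\, A \,\oplus\, R.
\]

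I equip $E(A)$ with a symmetric dagger Frobenius 2-structure modelled on composition in an abstract two-object groupoid with $R = \mathrm{Hom}(1,1)$, $L = \mathrm{Hom}(2,2)$, $A = \mathrm{Hom}(1,2)$ and $A^* = \mathrm{Hom}(2,1)$. Of the sixteen blocks of the multiplication $m \colon E(A) \otimes E(A) \to E(A)$, eight vanish (those whose groupoid source and target labels do not match, in particular the $A \otimes A$ block), while the remaining eight are built from $\tinymulttriple$, $i_L$, $i_R$ and their daggers: for instance the block $A \otimes A^* \to L$ is $i_L^{\dag}$, the block $L \otimes A \to A$ is $\tinymulttriple \circ (i_L \otimes \id[A])$, and the diagonal blocks $L \otimes L \to L$ and $R \otimes R \to R$ are the special 2-structures produced by Theorem~\ref{thm:splitting} applied to $\tinymulttriple$ and its dual from Example~\ref{ex:dual3}. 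The unit of $m$ assembles from the units of those two 2-structures. Each Frobenius 2-structure axiom then reduces to a blockwise graphical identity in $\tinymulttriple$, verifiable using the spider rules together with associativity, dagger symmetry, and normality.

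The inclusion $i := \coproj_A$ is an isometry by dagger biproducts, the vanishing $m \circ (i \otimes i) = 0$ is immediate from the $A \otimes A$ block being zero, and a short block computation using $i_L \circ i_L^{\dag} = r_A$ combined with normality (which makes the loops of $\tinymulttriple$ trivial) recovers $\tinymulttriple$ as the induced 3-structure on $A$; hence the unit of the adjunction is the identity. For the universal property, given $(A', \tinymulttriple', j \colon A' \to B, m_B) \in \BinSub(\catD)$ and a morphism of 3-structures $f \colon (A, \tinymulttriple) \to (A', \tinymulttriple')$, I define $\tilde f \colon E(A) \to B$ on summands by $\tilde f|_A := j \circ f$, $\tilde f|_{A^*} := (j \circ f)_*$, $\tilde f|_L := m_B \circ ((j \circ f) \otimes (j \circ f)_*) \circ i_L$, and dually on $R$. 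That $f$ preserves $\tinymulttriple$ and that $m_B \circ (j \otimes j) = 0$ together force $\tilde f$ to be a morphism of 2-structures, while uniqueness holds because the 2-structure operations on $B$ recover $\tilde f|_L$, $\tilde f|_R$ and $\tilde f|_{A^*}$ from $\tilde f|_A$. When addition in $\catD$ is idempotent, speciality $m \circ m^{\dag} = \id[E(A)]$ holds blockwise: off-diagonal summands in $m \circ m^{\dag}$ collapse by idempotence of addition, and each diagonal block reduces via the Frobenius law and normality to an identity on its summand. The main obstacle is this blockwise verification of the Frobenius 2-structure axioms on $E(A)$, which requires patient graphical computation with the spider rules, although no individual step is hard; a secondary technical point is showing that normality implies both sliding and dagger idempotence of $l_A$ and $r_A$, without which $E(A)$ could not even be defined.
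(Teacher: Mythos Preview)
Your approach is essentially the same as the paper's: both construct $E$ as the four-fold biproduct $L \oplus R \oplus A^* \oplus A$ (you order the summands differently and swap the names $L$/$R$, but this is cosmetic), define the multiplication by the same $4\times 4$ block table guided by the two-object groupoid intuition, verify the Frobenius 2-structure axioms via the spider/normal-form machinery for normal 3-structures, and establish the universal property by writing down the unique extension $\tilde f$ in terms of $j\circ f$, its conjugate $(j\circ f)_*$, and the induced maps on $L$ and $R$.

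One point to tighten: your speciality argument says ``off-diagonal summands in $m\circ m^{\dag}$ collapse by idempotence of addition.'' Idempotence of addition ($x+x=x$) does not make terms vanish, so this cannot be what kills off-diagonal blocks. The paper instead shows by direct path-counting that $p=\envelmult\circ\envelmult^{\dag}$ satisfies $p\circ p = p+p$, so idempotent addition gives $p^2=p$; combined with the already-established unitality this yields $p=\id$. If what you intended is that each \emph{diagonal} block of $p$ receives two equal contributions (one via the ``left'' factorisation and one via the ``right'') which idempotent addition merges into a single identity, that is closer to the truth, but you should make this explicit rather than attribute the vanishing of off-diagonal terms to idempotence.
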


\begin{proof}
Let $i_L \colon L \to A^* \otimes A$ and $i_R \colon R \to A \otimes A^*$ be dagger splittings for the left and right idempotents of $\tinymulttriple$. Write $B = (A^* \otimes A) \oplus (A \otimes A^*) \oplus A^* \oplus A$, and $E = L \oplus R \oplus A^* \oplus A$, with $e = i_L \oplus i_R \oplus \id \oplus \id \colon E \to B$ the obvious isometry. We define $E(\tinymulttriple) = (E, \tinymult[blackdot])$ as follows. Firstly, the unit $\tinyunit[blackdot]$ is given by $e \circ \tinyunit[blackdot] = \tinycup \oplus \tinycupswap$; since $e$ is an isometry it is split monic and so this determines it uniquely. 

The multiplication $\tinymult[blackdot]$ on $E$ may equivalently be defined in terms of the morphism $e \circ \tinymult[blackdot] \circ (e^{\dag} \otimes e^{\dag}) \colon B \otimes B \to B$, again since $e$ is split monic. Using distributivity of the biproducts, we define the latter to be the $\oplus$ sum of all the canonical morphisms definable in terms of $\tinymulttriple$, depicted in the following table:
\begin{center} \label{eq:table}
\begin{tabular}{crcccc} 
  & \multicolumn{1}{r|}{$\otimes$}       & $A^* \otimes A$ & $A \otimes A^*$ & $A^*$ & $A$ \\ \cline{2-6}
  \multirow{4}{*}{\pbox{20cm}{}}
                            & \multicolumn{1}{r|}{$A^* \otimes A$} & $\tablemultdownup$                     & $0$                                 & $\tablemultdual$       & $0$                     \\ \
                            & \multicolumn{1}{r|}{$A \otimes A^*$} & $0$                                 & $\tablemultupdown$                     & $0$                       & 
                            $\tablemult$       \\
                            & \multicolumn{1}{r|}{$A^*$}           & $0$                                   & $\tablemultdual$                 & 0                         & $\tablelidem$                     \\
                            & \multicolumn{1}{r|}{$A$}             & $\tablemult$                   & $0$                                   & $\tableridem$                       & $0$                      
\end{tabular}
\end{center}

Verifying that $E(\tinymulttriple)$ is well-defined is tedious but straightforward using Corollary~\ref{cor:normalform_consequences}. Unitality follows from the fact that we restrict from $B$ to $E$. The object $E$ is self-dual, with each side of~\eqref{eq:symmetric:binary} seen to be the identity, giving symmetry. Associativity and the Frobenius law~\eqref{eq:frobenius:binary} each follow from those for $\tinymulttriple$ and symmetry of the definition. By `counting paths' one may verify that the morphism $p = \tinymultflip[blackdot] \circ \tinymult[blackdot]$ satisfies $p \circ p = p + p$. Hence $p$ is idempotent whenever addition is, making $E(\tinymulttriple)$ special. 

In general, the coprojection $\coproj_A \colon A \to E$ indeed makes $\tinymulttriple$ a sub-3-structure of $E(\tinymulttriple)$, with $S(E(\tinymulttriple)) = \tinymulttriple$; we now verify the universal property.
Let $(C, i, \tinymult[whitedot])$ be an object of $\BinSub(\catD)$ and let $h \colon \tinymulttriple \to S(\tinymult[whitedot])$ be a morphism of 3-structures, so that $g = i \circ h$ is a morphism of 3-structures $\tinymulttriple \to \tinymulttriple[whitedot]$. We need to show that $h = S(f) = i^{\dag} \circ f \circ \coproj_A$ for a unique $f \colon E(\tinymulttriple) \to \tinymult[whitedot]$ in $\BinSub(\catD)$. Now, since any such $f$ preserves the multiplication and involution, one may check that:
 \[
 f \circ e^{\dag} = 
 \Big[
 \quad
  \begin{pic}[scale=.5]
    \node[whitedot] (d) at (0,3) {};
    \node[whitedot] (i) at (-1,2){};
    \node[morphism] (l) at (-1,1) {$g_*$};
    \node[morphism] (r) at (1,1) {$g$};
    \draw[arrow=.9] (l.south) to +(0,-1) node[below]{$A$};
    \draw[arrow=.7] (i.south) to (l.north);
    \draw[arrow=.5] (i.north) to[out=90,in=180] (d.west);
    \draw[arrow=.9] (d.north) to +(0,1) node[above]{$C$};
    \draw[reverse arrow=.9] (r.south) to +(0,-1) node[below]{$A$};
    \draw[arrow=.5] (r.north) to[out=90,in=0] (d.east);
  \end{pic}
  ,
  \qquad
    \begin{pic}[xscale=-.5,yscale=.5]
    \node[whitedot] (d) at (0,3) {};
    \node[whitedot] (i) at (-1,2){};
    \node[morphism] (l) at (-1,1) {$g_*$};
    \node[morphism] (r) at (1,1) {$g$};
    \draw[arrow=.9] (l.south) to +(0,-1) node[below]{$A$};
    \draw[arrow=.7] (i.south) to (l.north);
    \draw[arrow=.5] (i.north) to[out=90,in=180] (d.east);
    \draw[arrow=.9] (d.north) to +(0,1) node[above]{$C$};
    \draw[reverse arrow=.9] (r.south) to +(0,-1) node[below]{$A$};
    \draw[arrow=.5] (r.north) to[out=90,in=0] (d.west);
  \end{pic}
  ,
  \qquad
  \begin{pic}[scale=.5]
    \node[whitedot] (i) at (-1,3){};
    \node[morphism] (l) at (-1,1) {$g_*$};
    \draw[arrow=.9] (l.south) to +(0,-1) node[below]{$A$};
    \draw[arrow=.7] (i.south) to (l.north);
    \draw[arrow=.9] (i.north) to +(0,1) node[above]{$C$};
  \end{pic}
  ,
  \qquad
  \begin{pic}[scale=.5]
    \node[morphism] (l) at (-1,1) {$g$};
    \draw[reverse arrow=.9] (l.south) to +(0,-1) node[below]{$A$};
    \draw[arrow=.9] (l.north) to (-1,4) node[above]{$C$};
  \end{pic}
  \quad
  \Big]
  \]
Conversely, let us define $f$ in this way. By construction $f \circ \coproj_A = i \circ h = i \circ i^{\dag} \circ f \circ \coproj_A$ as required, and $f$ preserves the involution. To see that $f$ preserves multiplication is tedious but straightforward, after noting that $\tinymult[whitedot] \circ (g \otimes g) = \tinymult[whitedot] \circ ((i \circ h) \otimes (i \circ h)) = 0$
and:

  \begin{align*}
    \begin{pic}[xscale=-.5,yscale=.5]
      \node[whitedot] (d) at (0,3) {};
      \node[whitedot] (i) at (-1,2){};
      \node[morphism] (l) at (-1,1) {$g_*$};
      \node[morphism] (r) at (1,1) {$g$};
      \draw[arrow=.9] (l.south) to +(0,-1.3);
      \draw[arrow=.7] (i.south) to (l.north);
      \draw[arrow=.5] (i.north) to[out=90,in=180] (d.east);
      \draw[arrow=.9] (d.north) to +(0,1);
      \draw[reverse arrow=.9] (r.south) to +(0,-1.3);
      \draw[arrow=.5] (r.north) to[out=90,in=0] (d.west);
    \end{pic}
    & = 
    \begin{pic}[xscale=-.5,yscale=.5]
      \node[whitedot] (d) at (0,3) {};
      \node[whitedot] (i) at (-1,2){};
      \node[dot] (b) at (1,0) {};
      \node[morphism] (l) at (-1,1) {$g_*$};
      \node[morphism] (r) at (1,1) {$g$};
      \draw[arrow=.9] (l.south) to +(0,-1.3);
      \draw[arrow=.7] (i.south) to (l.north);
      \draw[arrow=.5] (i.north) to[out=90,in=180] (d.east);
      \draw[arrow=.9] (d.north) to +(0,1);
      \draw[arrow=.5] (r.north) to[out=90,in=0] (d.west);
      \draw[reverse arrow=.5] (r.south) to (b.north);
      \draw[reverse arrow=.9] (b.south) to +(0,-.5);
      \draw (b.140) to[out=140,in=90,looseness=2] ([xshift=3    mm,yshift=.5mm]b.west) to ([xshift=3mm,yshift=-.5mm]b.west) to[out=-90,in=-140,looseness=2] (b.-140);
    \end{pic}
    = 
    \begin{pic}[xscale=-.5,yscale=.5]
      \node[whitedot] (t) at (0,3.5) {};
      \node[whitedot] (r) at (-1,2.5){};
      \node[whitedot] (l) at (2.6,2.5){};
      \node[morphism] (a) at (4.25,1) {$g$};
      \node[morphism] (b) at (2.6,1) {$g_*$};
      \node[morphism] (c) at (-1,1) {$g_*$};
      \node[morphism] (d) at (1,1) {$g$};
      \draw[arrow=.9] (c.south) to +(0,-1.3);
      \draw[arrow=.7] (r.south) to (c.north);
      \draw[arrow=.5] (r.north) to[out=90,in=180] (t.east);
      \draw[arrow=.9] (t.north) to +(0,.5);
      \draw[reverse arrow=.5] (t.west) to[out=0,in=90,looseness=.75] (l.north);
      \draw[arrow=.5] (l.south) to (b.north);
      \draw[reverse arrow=.5] (l.west) to[out=0,in=90] (a.north);
      \draw[reverse arrow=.5] (l.east) to[out=180,in=90] (d.north);
      \draw[reverse arrow=.5] (a.south) to[out=-90,in=-90,looseness=1.5] (b.south);
      \draw[reverse arrow=.9] (d.south) to +(0,-1.3);
    \end{pic}
    \\ & =
    \begin{pic}[scale=.5]
      \node[morphism] (a) at (0,0) {$g$};
      \node[morphism] (b) at (1.75,0) {$g_*$};
      \node[morphism] (c) at (3.5,0) {$g$};
      \node[morphism] (d) at (5.25,0) {$g_*$};
      \node[whitedot] (l) at (0,1) {};
      \node[whitedot] (r) at (1,2) {};
      \draw[reverse arrow=.5] (a.south) to[out=-90,in=-90,looseness=1.5] (b.south);
      \draw[reverse arrow=.9] (c.south) to +(0,-1);
      \draw[arrow=.9] (d.south) to +(0,-1);
      \draw[arrow=.7] (a.north) to (l.south);
      \draw[arrow=.5] (l.east) to[out=0,in=-90] (r.south);
      \draw[arrow=.8] (r.east) to[out=0,in=-90] +(.2,.2) to[out=90,in=90] ([yshift=18mm]b.north) to (b.north);
      \draw[reverse arrow=.9] (r.north) to[out=90,in=90] ([yshift=16mm]c.north) to (c.north);
      \draw[arrow=.95] (r.west) to[out=180,in=-90] +(-.2,.2) to[out=90,in=90] ([yshift=16mm]d.north) to (d.north);
      \draw[arrow=.9] (l.west) to[out=180,in=-90,looseness=.7] +(-.5,2.5);
    \end{pic}
    =
    \begin{pic}[scale=.5]
      \node[morphism] (a) at (0,0) {$g$};
      \node[morphism,hflip] (b) at (1.5,2) {$g$};
      \node[whitedot] (l) at (0,1) {};
      \node[dot] (r) at (1.5,3) {};
      \draw[arrow=.7] (a.north) to (l.south);
      \draw[arrow=.5] (l.east) to[out=0,in=-90,looseness=.8] (b.south);
      \draw[arrow=.7] (b.north) to (r.south);
      \draw[arrow=.5] (r.east) to[out=0,in=-90] +(.2,.2) to[out=90,in=90] +(.5,0) to +(0,-3.5) to[out=-90,in=-90] (a.south);
      \draw[reverse arrow=.95] (r.north) to[out=90,in=90] +(1.5,0) to +(0,-4.5);
      \draw[arrow=.97] (r.west) to[out=180,in=-90] +(-.2,.2) to[out=90,in=90] +(2.5,0) to +(0,-4.5); 
      \draw[arrow=.9] (l.west) to[out=180,in=-90,looseness=.5] +(-.5,3);
    \end{pic}
    =
    \begin{pic}[xscale=-.5,yscale=.5]
      \node[whitedot] (d) at (0,3) {};
      \node[whitedot] (i) at (-1,2){};
      \node[morphism] (l) at (-1,1) {$g_*$};
      \node[morphism] (r) at (1,1) {$g$};
      \draw[arrow=.7] (i.south) to (l.north);
      \draw[arrow=.5] (i.north) to[out=90,in=180] (d.east);
      \draw[arrow=.9] (d.north) to +(0,1);
      \draw[arrow=.5] (r.north) to[out=90,in=0] (d.west);
      \node[dot] (b) at (1,0) {};
      \draw[arrow=.7] (b.north) to (r.south);
      \draw[arrow=.9] (b.south) to +(0,-.8);
      \draw[reverse arrow=.9] (b.west) to[out=0,in=90] +(.5,-1);
      \draw[reverse arrow=.8] (b.east) to[out=180,in=90] +(-.3,-.3) to[out=-90,in=-90] ([yshift=-9mm]l.south) to (l.south);
    \end{pic}
  \end{align*}
  This finishes the proof.
\end{proof}

\begin{example} \label{ex:envel_groupoid}
For a normal Frobenius 3-structure $\tinymulttriple$ in $\Rel(\cat{Set})$, corresponding to a connector in $\cat{Set}$, the construction of $E(\tinymulttriple)$ is studied in detail by Kock in~\cite{kock2007principal} under the name of the \emph{enveloping groupoid}. In particular~\eqref{eq:table} appears as table (7) in~\cite{kock2007principal}. The same construction makes sense in $\Rel(\catC)$ whenever $\catC$ is coherent, and Kock shows that the above construction defines a left adjoint to the forgetful functor $\Gpd(\catC) \to \Conn(\catC)$ (at the level of $\catC$, rather than $\Rel(\catC)$).
\end{example}




\begin{example} \label{ex:linkingC*algebra}
Any TRO $V \subseteq \boundedops(H, K)$ embeds as a substructure of its \emph{linking C*-algebra}~\cite{kaur:ternary}, the closure in $\boundedops(H \oplus K)$ of the *-algebra
\[
\begin{bmatrix}
    V^*V       & V^* \\
    V       & VV^*
\end{bmatrix}
\]
where $V^*V = \{a^*b \mid a, b \in V\}$ and $VV^* = \{ab^* \mid a, b \in V\}$. In the finite-dimensional setting, $V$ is of the form $\bigoplus^n_{i=1} \boundedops(H_i, K_i)$, forming a 3-structure $\tinymulttriple$ in $\FHilb$, and this construction is formally identical to that of Theorem~\ref{thm:Envelope_struc_and_UP}. Note, however, there is some subtlety; $\tinymulttriple$ is not strictly even left and right idempotent, and instead the idempotents corresponding to $V^*V = \boundedops(\bigoplus^n_{i=1} H_i)$ and $VV^* = \boundedops(\bigoplus^n_{i=1} K_i)$ should be used in the construction.
\end{example}

\bibliographystyle{plain}
\bibliography{monoidalalgebra}

\end{document}